\numberwithin{equation}{section}
\newtheorem{dummy}{dummy}[section]
\newtheorem{assumptions}[dummy]{Assumptions}
\newtheorem{lemma}[dummy]{Lemma}
\newtheorem{theorem}[dummy]{Theorem}
\newtheorem{corollary}[dummy]{Corollary}
\newtheorem{proposition}[dummy]{Proposition}
\theoremstyle{definition}
\newtheorem{definition}[dummy]{Definition}
\newtheorem{example}[dummy]{Example}
\newtheorem{remark}[dummy]{Remark}
\newcommand{\bA}{\mathbb{A}}
\newcommand{\bC}{\mathbb{C}}
\newcommand{\bP}{\mathbb{P}}
\newcommand{\bQ}{\mathbb{Q}}
\newcommand{\bR}{\mathbb{R}}
\newcommand{\bZ}{\mathbb{Z}}
\newcommand{\cF}{\mathcal{F}}
\newcommand{\cM}{\mathcal{M}}
\newcommand{\cO}{\mathcal{O}}
\newcommand{\cS}{\mathcal{S}}
\newcommand{\cT}{\mathcal{T}}
\newcommand{\shF}{{\cF}}
\newcommand{\shM}{{\cM}}
\newcommand{\shO}{{\cO}}
\newcommand{\op}{\operatorname}
\newcommand{\X}{X}
\newcommand{\sympol}{\triangle}
\newcommand{\cPxfan}{{\Sigma^\vee}}
\renewcommand{\AA}{\mathbb{A}}
\newcommand{\CC}{\mathbb{C}}
\newcommand{\conv}{{\op{conv}}}
\newcommand{\dual}{{\vee}}
\newcommand{\gp}{{\op{gp}}}
\newcommand{\Hom}{\mathrm{Hom}}
\newcommand{\hra}{\hookrightarrow}
\renewcommand{\log}{{\op{log}}}
\newcommand{\lra}{\longrightarrow}
\newcommand{\Mor}{\op{Mor}}
\newcommand{\PP}{\mathbb{P}}
\newcommand{\Proj}{\mathrm{Proj}\,}
\newcommand{\ra}{\to}
\newcommand{\RR}{\mathbb{R}}
\newcommand{\Spec}{\mathrm{Spec}\,}
\newcommand{\tiM}{\widetilde{M}}
\newcommand{\tiN}{\widetilde{N}}
\newcommand{\triang}{\mathcal T}
\newcommand{\ZZ}{\mathbb{Z}}
\newcommand{\Maps}{\mathrm{Maps}}
\renewcommand{\subset}{\subseteq}
\newcommand{\PS}{\partial \sympol}
\newcommand{\WPSp}{\widehat{\partial\sympol'}}
\newcommand{\psprime}{\PS '}
\newcommand{\PSp}{\PS'}
\begin{document}

\title[Skeleta of Affine Hypersurfaces]{Skeleta of Affine Hypersurfaces}

\begin{abstract}
A smooth affine hypersurface $Z$ of complex dimension $n$
is homotopy equivalent to an $n$-dimensional cell complex.  
Given a defining polynomial $f$ for $Z$ as well as a regular triangulation $\cT_\sympol$
of its Newton polytope $\sympol,$ we provide
a purely combinatorial construction of a compact topological
space $S$ as a union of components of real dimension $n$, and
prove that $S$ embeds into $Z$
as a deformation retract.  In particular, $Z$ is homotopy equivalent to $S.$

\end{abstract}

\author{Helge Ruddat}
\author{Nicol\`o Sibilla}
\author{David Treumann}
\author{Eric Zaslow}
\maketitle

\setcounter{tocdepth}{1}
\tableofcontents

\section{Introduction}
The Lefschetz hyperplane theorem is equivalent to the assertion that a smooth affine variety $Z$ of complex dimension $n$ has vanishing homology
in degrees greater than $n$. A stronger version of this assertion is
attributed in the work of Andreotti-Frankel \cite{AF} to Thom:  $Z$ actually deformation retracts onto a cell complex of real dimension at most $n$.  
We will borrow terminology from symplectic geometry and call a deformation retract with this property a \emph{skeleton} for $Z$.  The purpose of this paper is to investigate the combinatorics of such skeleta for affine hypersurfaces $Z \subset \bC^{n+1}$, and a more general class of affine hypersurfaces in affine toric varieties.  For any such hypersurface, we give a combinatorial recipe for a large number of skeleta.

By ``combinatorial'' we mean that our skeleton makes contact with standard discrete structures from algebraic combinatorics, such as polytopes and partially ordered sets.  Before explaining what we mean in more detail, let us recall for contrast Thom's beautiful Morse-theoretic proof of Lefschetz's theorem, which provides a recipe of a different nature.  Fix an embedding $Z \subset \bC^N$, and let $\rho:Z \to \bR$ be the function that measures the distance to a fixed point $P \in \bC^N$.  For a generic choice of $P$, this is a Morse function, and since it
is plurisubharmonic, its critical points cannot have index larger than $n$.  Thom's skeleton is the union of stable manifolds for gradient flow of $\rho$.

This recipe reveals many important things about the skeleton (most important among them that the skeleton is \emph{Lagrangian}, a point that has motivated us but plays no role in this paper).  
The proof also works in the more general context of Stein
and Weinstein manifolds, see e.g. \cite{CE}.
However, finding an explicit description of these stable manifolds
requires one to solve some fairly formidable differential equations.
In this paper, we avoid this difficulty
by defining a skeleton through a simple, combinatorial
construction.

One might expect a rich combinatorial structure to emerge from the theory of Newton polytopes for hypersurfaces.  The situation is simplest
for hypersurfaces in $(\bC^*)^{n+1}$ rather than in $\bC^{n+1}$---we will explain this special case here, and the general situtation in Section \ref{subsec:intro2}.
If $Z$ is a hypersurface in $(\bC^*)^{n+1}$ we can write its defining equation as $f = 0$,
where $f$ is a Laurent polynomial of the form
\[
\sum_{m \in \bZ^{n+1}} a_m z^m.
\]
Here, if we write
$m$ as $(m_1,\ldots,m_{n+1})$ and the coordinates on $(\bC^*)^{n+1}$ as $z_1,\ldots, z_{n+1}$, then $z^m$ denotes the monomial $z_1^{m_1} \cdots z_{n+1}^{m_{n+1}}$.  The convex hull of the set of $m$ for which the coefficient $a_m$ is nonzero is called the \emph{Newton polytope} of $f$.  By multiplying $f$ by a monomial, we may assume without loss of generality that
the Newton polytope contains $0$.
The significance of this definition is that, for a generic choice of coefficients $a_m$, the topological type of the hypersurface depends only on this polytope.  
From this point of view, one goal might be to construct
a combinatorial skeleton which also depends only on this polytope. 
Actually, we need a triangulation too.  This should not be surprising:  a skeleton
is not unique, as different Morse functions will produce different skeleta.
Our combinatorial version of a Morse function turns out to be a
triangulation:  different triangulations will produce different skeleta.

\begin{definition}\label{def:skeldef}
Let $\sympol \subset \bR^{n+1}$ be a lattice polytope with $0\in \sympol.$  Let $\cT_\sympol$
be a star triangulation of $\sympol$ based at $0$, and define $\cT$ to be the set of simplices of
$\cT_\sympol$ not meeting $0.$  
Write $\partial\sympol'$ for the support of $\cT$.
(Note $\partial\sympol'$ equals the boundary $\partial\sympol$ if $0$ is an
interior point.  Note, too, that $\cT$ determines $\cT_\sympol$, even if $0\in \partial\sympol.$)
Define $S_{\sympol,\cT} \subset \PS' \times \Hom(\bZ^{n+1},S^1)$ to be the set of pairs $(x,\phi)$ satisfying the following condition:
\begin{quote}
$\phi(v) = 1$ whenever $v$ is a vertex of the smallest simplex $\tau \in \cT$ containing $x$
\end{quote}
\end{definition}

Put $S := S_{\sympol,\cT}.$  Then we have:

\begin{theorem}[{\bf Main Theorem}]
\label{thm:1.1}
Let $\sympol$ and $\cT$ be as in Definition \ref{def:skeldef}.  Let $Z$ be a generic smooth hypersurface whose Newton polytope is $\sympol$.  If $\cT$ is \emph{regular}, then $S$ embeds into $Z$ as a deformation retract.
\end{theorem}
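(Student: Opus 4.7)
The plan is to reduce the theorem to the large complex structure (``tropical'') limit via a one-parameter degeneration. Since $\cT$ is regular, it is induced by a convex piecewise-linear height function $\nu : \sympol \to \bR$. For generic complex coefficients $a_m$, put
\[
f_t(z) = \sum_{m \in \sympol \cap \bZ^{n+1}} a_m\, t^{\nu(m)} z^m, \qquad Z_t := \{f_t = 0\} \subset (\bC^*)^{n+1}.
\]
Standard equisingularity results (Varchenko, Viro patchworking) imply that for $t > 0$ sufficiently small the $Z_t$ are smooth and mutually isotopic in $(\bC^*)^{n+1}$, and that any generic smooth $Z$ with Newton polytope $\sympol$ is isotopic to such a $Z_t$. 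It therefore suffices to prove the theorem for $Z = Z_t$ with $t$ small.

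As $t \to 0^+$, the rescaled amoeba $-\mathrm{Log}(Z_t)/\log t \subset \bR^{n+1}$ converges in Hausdorff distance to the tropical hypersurface $V_{\mathrm{trop}}(\nu)$, which is dual to $\cT_\sympol$: each $k$-simplex $\sigma \in \cT_\sympol$ of dimension $\ge 1$ determines an $(n+1-k)$-dimensional cell $C_\sigma$ of $V_{\mathrm{trop}}(\nu)$. Over a generic point of $C_\sigma$ for $\sigma = \{v_0, \ldots, v_k\} \in \cT$, the asymptotic local equation of $Z_t$ is the sparse polynomial $\sum_i a_{v_i} z^{v_i} = 0$. Because $\{0\} \cup \sigma$ is a simplex of $\cT_\sympol$, the $v_i$ are linearly independent, so a monomial change of variables identifies the local model with the product of a pair-of-pants in $(\bC^*)^k$ and the $(n+1-k)$-dimensional subtorus $T_\sigma := \{\phi \in (S^1)^{n+1} : \phi(v_i) = 1,\ \forall i\}$, translated by a fixed phase. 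The natural skeleton of this local model is a distinguished coset of $T_\sigma$, matching exactly the fiber of $S$ over the interior of $\sigma$ prescribed by Definition~\ref{def:skeldef}. I would assemble these local skeleta into an embedding $\iota : S \hookrightarrow Z_t$, parametrizing the base via the cellular structure of $\partial\sympol'$ and gluing the torus fibers along incidences $\sigma \subsetneq \sigma'$ using the inclusions $T_{\sigma'} \subset T_\sigma$.

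For the deformation retraction $r_t : Z_t \to \iota(S)$, I would proceed in two stages. First, a torus-equivariant vector field on $(\bC^*)^{n+1}$, whose $\mathrm{Log}$-component pushes the amoeba onto the subcomplex $V^\circ_{\mathrm{trop}} \subset V_{\mathrm{trop}}(\nu)$ consisting of cells dual to simplices of $\cT$, contracts $Z_t$ onto this preimage. Second, on each $\mathrm{Log}$-fiber a torus-equivariant flow contracts the coamoeba onto the prescribed subtorus $T_\sigma$; in the tropical limit each fiber is already a coset of $T_\sigma$, so the fiberwise retraction is essentially the moment-map projection, and it deforms smoothly for small $t$. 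The two flows would be assembled using cutoff functions subordinate to the stratification by the cells $C_\sigma$.

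The hard part will be ensuring continuity of $r_t$ across strata of different dimensions. As one crosses from a cell $C_\sigma$ into a face $C_{\sigma'}$ with $\sigma \subsetneq \sigma'$, the target subtorus shrinks from $T_\sigma$ to $T_{\sigma'} \subset T_\sigma$, and the flows must be compatible across this collapse. The \emph{nested} structure $T_{\sigma'} \subset T_\sigma$ makes consistent patching possible in principle, but verifying that the resulting retract is exactly $\iota(S)$ and not a thickening requires a careful induction on the codimension of strata. This is the only point at which the regularity hypothesis on $\cT$ enters essentially: without a coherent height $\nu$, Viro patchworking fails and the local tropical models cannot be glued into a smooth global hypersurface over which the torus-equivariant retraction extends.
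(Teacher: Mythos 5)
Your proposal takes a genuinely different route from the paper's, even though both begin with the same combinatorial input: the convex height function underlying the regular triangulation and the resulting degeneration. You propose a tropical argument in the spirit of Mikhalkin and Viro --- patchwork the hypersurface as $Z_t$ for small $t>0$, invoke convergence of the rescaled amoeba to the tropical hypersurface dual to $\cT_\sympol$, identify local pair-of-pants $\times$ subtorus models over the tropical cells, and assemble the retraction from torus-equivariant flows. The paper instead degenerates all the way to $t=0$ inside a toric family, identifies the special fiber $X_0$ as a union of affine toric strata $X_{0,\tau}$ indexed by $\tau\in\cT$, retracts each onto a nonnegative moment-map locus to produce an embedded branched cover $\widehat{\partial\sympol'}$ of $\partial\sympol'$ as a deformation retract of $X_0$, and then passes to log geometry: the fiber $(X_{0,\log})_1$ of the Kato--Nakayama space over the log point is homeomorphic to the nearby fiber $Z$ because, by the Nakayama--Ogus relative rounding theorem, the proper map $\overline{X}_\log\to\AA^1_\log$ is a topological fiber bundle. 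What the log route buys is that one never works at $t>0$ at all: the nearby fiber is reconstructed exactly from the log structure on $X_0$, so there are no approximation errors to control and no amoeba-convergence estimates to make.

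The genuine gap in your plan is precisely the point you flag. ``Cutoff functions subordinate to the stratification'' is a statement of intent, not a construction: the flows contracting the coamoeba onto $T_\sigma$ change from stratum to stratum, they must be matched along lower strata where the target subtorus collapses from $T_\sigma$ to $T_{\sigma'}\subset T_\sigma$, and for small $t>0$ the hypersurface is only approximately the product local model, so the matching must also absorb the deviation of $Z_t$ from the tropical limit --- without controlling that, you cannot conclude the retract is $\iota(S)$ rather than a thickening of it. The paper handles this by two devices. It works at $t=0$, where the local models are exact. And it organizes the retraction inductively on codimension: the retraction of $X_0$ onto $\widehat{\partial\sympol'}$ is built from moment-map retractions on the charts $\bP^{\dim\tau}\setminus V(\ell_\tau)$ that preserve the standard toric filtration, and this retraction is then lifted along $\rho_1:(X_{0,\log})_1\to X_0$, which over the interior of each stratum restricts to a trivial bundle with fiber a compact torus $A_\tau$, using a path-lifting lemma for branched covers and, more generally, for such stratum-wise product maps. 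That lifting lemma does the cross-stratum bookkeeping automatically. Your route is workable in outline but would need both a quantitative Viro-type patchworking control and an analogue of this inductive lifting; as written, neither is supplied, so the deformation retraction is not actually constructed.
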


The term ``regular'' is explained in Section \ref{sec:toricdegeneration}.  
We do not know if this hypothesis can be removed but note that every lattice polytope admits a regular lattice triangulation.  The role the triangulation
plays in the proof is in the construction of a degeneration of $Z$.  
Regularity of the triangulation allows the projection
$(x,\phi)\mapsto x$
of $S$ to $\PS$ (or to the support
of $\cT$ if $0$ is on the boundary of $\sympol$)
to be identified with the specialization map, under which
the skeleton of $Z$ projects to a
kind of nonnegative locus of toric components.  For more see Section \ref{proofgist} below.

\begin{figure}
\label{fig:quartikdual}
\centering
\includegraphics[angle=90,scale=.3]{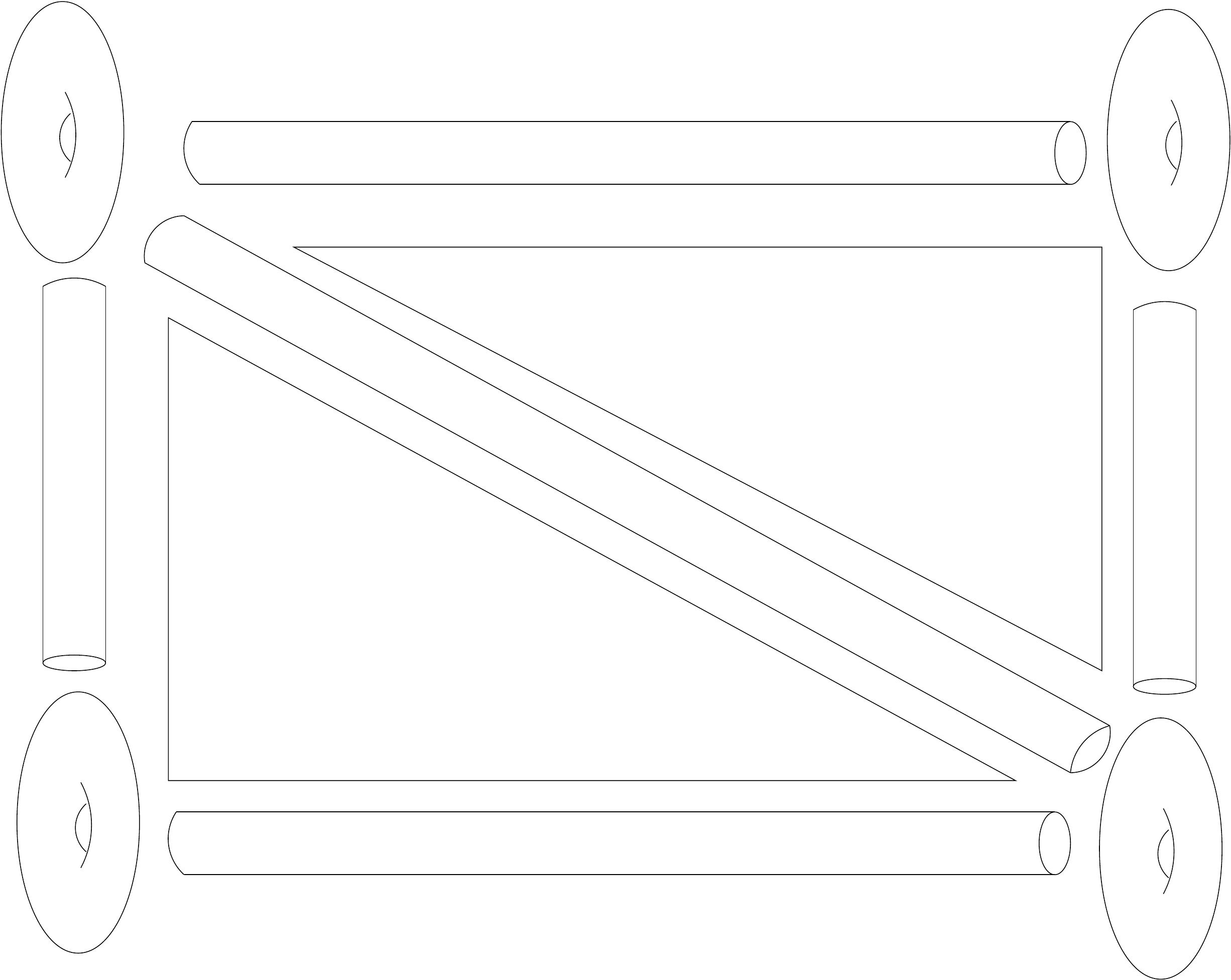}
\caption{\small The tetrahedron $\sympol \subset \bR^3$ with vertices at $(1,0,0)$, $(0,1,0)$, $(0,0,1)$, and $(-1,-1,-1)$ has a unique star triangulation $\cT_{\sympol}$.  The figure shows part of $S_{\sympol,\cT}$, which by Theorem \ref{thm:1.1} is a skeleton of a surface in $\bC^* \times \bC^* \times \bC^*$ cut out by the quartic equation $ax + by + cz + \frac{d}{xyz} + e = 0$
.  Each ``tube'' meeting one of the tori $S^1 \times S^1$ is attached along a different circle, and the resulting figure does not embed in $\bR^3$.  There is a sixth tube and two additional triangles ``behind'' the diagram, they are to be glued together in the shape of the tetrahedron $\sympol$.}
\end{figure}

\subsection{Hypersurfaces in affine toric varieties} 
\label{subsec:intro2}
In Section \ref{sec:five}, we prove an extension of our theorem to the case where
$Z$ is a smooth affine hypersurface in a more general
affine toric variety, such as $\bC^{n+1},$ $(\bC^*)^k\times \bC^l,$ or
even singular spaces such as $\bC^2/(\bZ/2).$
In these cases, we define the skeleton as a quotient of the
construction of Definition \ref{def:skeldef}.  

\begin{definition}
Let $\sympol$, $\cT$ and $S_{\sympol,\cT}$ be as in Definition \ref{def:skeldef}, so in particular $0\in\sympol$.  Let $K = \bR_{\geq 0} \sympol$ be the rational polyhedral cone generated by $\sympol$.  Define $S_{\sympol,\cT,K}$ to be the quotient of $S_{\sympol,\cT}$ by the equivalence relation
\[
(x,\phi) \sim (x',\phi') \text{ if $x = x'$ and $\phi\vert_{K_x \cap M} = \phi'\vert_{K_x \cap M}$}
\]
where $K_x$ denotes the smallest face of $K$ containing $x$.
\end{definition}

The cone $K$ determines an affine toric variety $\Spec(\bC[K \cap M])$.  If this is smooth or has at most one isolated singularity, and if $Z$ is a smooth hypersurface in $\Spec(\bC[K \cap M])$ with Newton polytope $\sympol$ and generic coefficients, then $Z$ deformation retracts onto a subspace homeomorphic to $S_{\sympol,\cT,K}$.  (The hypothesis that $K = \bR_{\geq 0} \sympol$ can be weakened; see Assumption \ref{assumptions}(3).)

To illustrate let us describe two skeleta of the subvariety of $\bC^3$ cut out by a generic quadric---in fact $x^2 + y^2 + z^2 = 1$ is sufficiently generic and we should expect $Z$ and its skeleton to be homotopy equivalent to a $2$-sphere.  In this case $\sympol$ is the convex hull of $\{(0,0,0), (2,0,0),(0,2,0), (0,0,2)\}$ and $K = \bR_{\geq 0}^3$.  The part $\partial \sympol'$ of $\partial \sympol$ to be triangulated is the face $\{(2,0,0),(0,2,0),(0,0,2)\}$, and we can describe $S_{\sympol,\cT,K}$ in terms of the projection map to $\partial \sympol'$.

{\bf Octahedron.}  If we give $\partial\sympol'$ its canonical triangulation, i.e. $\partial \sympol'$ itself is the only top-dimensional simplex, then the map $S_{\sympol,\cT,K}$ is homeomorphic to $S^2$.  In fact it is combinatorially an octahedron 
\[\{(x_1,x_2,x_3) \in \bR^3 \mid |x_1|+|x_2|+|x_3| = 2\}
\]
The map $S_{\sympol,\cT,K} \to \partial \sympol'$  is homeomorphic to the 8-to-1 map, branched over the boundary of $\partial \sympol'$, carrying a point $(x,y,z)$ to $(|x|,|y|,|z|)$.

{\bf Klein bottle sandwich.}  
If we triangulate $\partial \sympol'$ as in the diagram
\begin{figure}[H]
\includegraphics[height=1.5in]{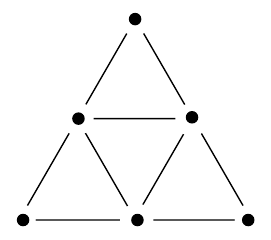}
\end{figure}
\noindent
the skeleton $S_{\sympol,\cT,K}$ is homeomorphic to the following space.  Let $X$ be a Klein bottle, i.e. a nontrivial $S^1$-bundle over $S^1$.  Let $\alpha$ and $\beta$ be two nonintersecting sections of the projection $X \to S^1$.  Then $S_{\sympol,\cT,K}$ is obtained from $X$ by attaching two copies of $\partial \sympol'$ along $\alpha$ and $\beta$.  Each attachment is along the boundary of the inner triangle of $\partial \sympol'$, i.e. the vertices $\{(1,1,0),(1,0,1),(0,1,1)\}$ and the edges joining them.

Since $\partial \sympol'$ is contractible, homotopically the attachments have the same effect of contracting $\alpha$ to a point and $\beta$ to a point.  This can be seen to be a $2$-sphere by identifying $X$ with the real (unoriented) blowup of $S^2$ at the north and south poles.

\subsection{Log geometry and the proof}
\label{proofgist}

The technique of the proof is to use the triangulation to construct
a degeneration of the ambient $(\bC^*)^{n+1},$ and with it the hypersurface.
Each component of the degeneration is an affine space $\bC^{n+1}$, or the quotient of an affine space by a finite commutative group, along which the hypersurface has a simple description: it is an affine Fermat hypersurface, and thus a finite branched cover over affine space $\bC^n$.
(see also the discussion of Mikhalkin's work in Section \ref{relatedwork}).
So the degenerated hypersurface is well understood.

\begin{example}
\label{twodex}
Consider the space $Z = \{-1 + x + y + x^{-1}y^{-1} = 0\}$ inside $\bC^*\times \bC^*,$
topologically a two-torus with three points removed.  The Newton polytope $\triangle =
\conv\{(1,0),(0,1),(-1,-1)\}$ $\subset \bR^2$ has a unique regular triangulation corresponding to
the unique lattice triangulation of its boundary.  To understand the associated degeneration,
first identify $\bC^*\times \bC^*$ with the locus $\{abc =1\}\subset \bC^3$ and
describe $Z$ by the equation $-1 + a + b + c = 0.$  Next we can identify this geometry
with the locus $t=1$ inside the \emph{family} $\{abc = t^3\}\subset \bC^4.$
At $t=0,$ we have for the
ambient space $\bC^2_{\{a=0\}}\cup \bC^2_{\{b=0\}}\cup \bC^2_{\{c=0\}},$
with the hypersurface described by $\{b+c=1\}\cup\{c+a=1\}\cup\{a+b=1\},$
i.e. a union of affine lines.
\end{example}

The degenerated hypersurface deformation retracts onto a simple locus which can be triangulated explicitly (this triangulation occurs for the first time in \cite{Deligne}).  In each component, the top-dimensional simplices of this triangulation are the \emph{nonnegative loci} of the components, together with their translates by a finite subgroup of $(\bC^*)^{n+1}$.  For instance in Example \ref{twodex}, the complex line $\{a+b = 1\} \subset \bC^2_{c = 0}$ retracts to the real interval $\{a+b = 1, a \geq 0, b \geq 0\})$.  What remains is to account for the topological difference between the degenerated hypersurface and the general one.  To the reader familiar with the theory of vanishing cycles (which measure the cohomological difference between the degenerate hypersurface and the general one), this will suggest that we take for a skeleton of $Z$ the preimage under a ``specialization'' map of the skeleton for $Z_0$.  Log geometry gives a way of making this precise.

The toric setting of log geometry is particularly simple.  A toric variety comes with a standard log structure which can be pulled back to a toric stratum, enabling the stratum to ``remember'' how it is embedded in the ambient space.  In short, the compact torus fixing the defining equations of a stratum of the degeneration serves as the expectional torus in a real, oriented blowup from which one can extract the nearby fiber of the degenerate hypersurface.

\begin{example}
To illustrate this point, consider first the local geometry of
the degeneration near a singular point of Example \ref{twodex}, i.e.
$\{uv = \epsilon^3\}\subset \bC^2.$  The two-torus
$S^1\times S^1\subset \bC^*\times \bC^*$ acts on $\bC^2$ and the
``antidiagonal'' circle fixes the defining equation for all $\epsilon.$
As $\epsilon$ goes to zero, this antidiagonal circle becomes
homotopically trivial since it retracts to the fixed point $(0,0)$ in $U=\{uv=0\}$.
We want to ``keep'' this circle by remembering the way we took the limit. 
Let $\rho:V\ra U$ be a retraction map from some fiber $\epsilon\neq 0$ to $U$.
To find the skeleton, we take the non-negative real locus $\bar S=\{(u,v)\in\RR_{\ge 0}^2|uv=0\}$ in $U$ and find the skeleton $S$ as the inverse image of $\bar S$ under $\rho$ which has the effect of attaching a circle at the point $(0,0)$ in $\bar S$.
Doing this globally in Example \ref{twodex} yields a skeleton $S$ that is the boundary of a triangle (by glueing the three $\bar S$'s) with a circle attached at each of the three vertices.
\end{example}

\subsection{Related Work} 
\label{relatedwork}

A skeleton for Fermat hypersurfaces was described by Deligne in \cite[pp. 88--90]{Deligne}, and this skeleton is visible in our own in a manner described in Remark \ref{rem:deligne}.
Our ``skeleta'' are different than the ``skeleta'' that appear in nonarchimedean geometry \cite{Berkovich,KS05}, but $\partial \sympol'$ plays a similar role in both constructions.  It would be interesting to study this resemblance further.

Hypersurfaces in algebraic tori have been studied by
Danilov-Khovanski\v{\i} \cite{DK} and Batyrev \cite{Ba93}.
Danilov-Khovankski\v{\i}
computed mixed Hodge numbers, while Batyev studied
the variation of mixed Hodge structures.
Log geometry has been extensively employed by Gross and Siebert \cite{GS2}
in their seminal work studying the degenerations appearing
in mirror symmetry.  Their strategy is crucial to our work, even though
we take a somewhat different track by working in a non-compact setting for hypersurfaces that are not necessarily Calabi-Yau.
The non-compactness allows us to deal with log-smooth log structures.
Mirror symmetry for general hypersurfaces was recently studied in \cite{GKR} (projective case) and \cite{AAK} (affine case) using polyhedral decompositions of the Newton polytope. This relates to the Gross-Siebert program by embedding the hypersurface in codimension two in the special fiber of a degenerating Calabi-Yau family. In this family, the hypersurface
coincides with the log singular locus --- see \cite{R} for the simplicial case.

In the symplectic-topological setting, Mikhalkin \cite{M}
constructed a degeneration of a projective algebraic hypersurface using a triangulation
of its Newton polytope to provide a higher-dimensional ``pair-of-pants''
decomposition.  He further identified a stratified torus fibration
over the spine of the corresponding amoeba.  This viewpoint was first
applied to homological mirror symmetry (``HMS'') by Abouzaid \cite{A}.
Mikhalkin's construction and perspective inform the current work greatly,
even though our route from HMS is a bit ``top-down.''  We describe it here.

When $\sympol$ is reflexive, $Z$ can be seen as the ``large volume limit"
of a family of Calabi-Yau hypersurfaces in the toric variety $\bP_\sympol$ defined by $\sympol.$  The
dual polytope $\sympol^\vee$ corresponds to the toric variety $\bP_{\sympol^\vee}$
containing the mirror family.  The mirror ``large complex structure limit'' $Z^\vee$
is the union of reduced toric divisors of $\bP_{\sympol^\vee}.$ 
In \cite{FLTZ} a relation was found between coherent sheaves on a toric variety,
such as $\bP_{\sympol^\vee},$ and a subcategory of
constructible sheaves on a real torus.  The subcategory is defined by a conical
Lagrangian $\Lambda$ in the cotangent bundle of the torus.  As discussed in \cite{TZ},
specializing to $Z^\vee$,
the complement of the open orbit of $\bP_\sympol$, can be achieved by
excising the zero section from $\Lambda$.  The resulting conical Lagrangian
is homotopy equivalent to the Legendrian $\Lambda^\infty$ at contact infinity of the cotangent bundle.
We can now explain how this relates to skeleta.  First,
when $\sympol$ is reflexive and simplicial and we choose $\cT$
to be the canonical triangulation
of its boundary, then $S$ is homeomorphic to $\Lambda^\infty.$  
In \cite{TZ} it is shown that $\Lambda^\infty$
supports a Kashiwara-Schapira sheaf of dg categories, and this
is equivalent to the ``constructible plumbing model'' of \cite{STZ}.  Following \cite{STZ},
this sheaf should be equivalent to perfect complexes on $Z^\vee$
and it is conjectured in \cite{TZ} 
that  under homological mirror symmetry
it is also equivalent to the sheaf of Fukaya categories,
conjectured to exist by Kontsevich, supported on the skeleton of $Z$.  
In particular, $S$ should be the skeleton of $Z$ itself, and in the simplicial
reflexive case this was conjectured in \cite{TZ}.

\subsection{Notation and conventions} 
\label{sec:conventions}

\subsubsection{Hypersurfaces in an algebraic torus}
\label{subsec:hypersurfaces}

Each $(m_0,...,m_n)\in\ZZ^{n+1}$ determines a monomial function $(\bC^*)^{n+1} \to \bC$ which we denote by 
$z^m = \prod_{i = 0}^{n+1} z_i^{m_i}$. 
If $f: (\bC^*)^{n+1} \to \bC$ is a Laurent polynomial we let $V(f) = \{z \mid f(z) = 0\}$ denote its zero locus.  The \emph{Newton polytope} of $f$ is the convex hull of the set of $m \in \bZ^{n+1}$ whose coefficient in $f$ is nonzero.  If the coefficients are chosen generically, then the diffeomorphism type of $V(f)$ depends only on the Newton polytope of $f$.  In fact it suffices that the extreme coefficients (i.e. the coefficients corresponding to the vertices of the Newton polytope) are chosen generically.  More precisely, 

\begin{proposition}[{e.g. \cite[Ch. 10, Cor. 1.7]{GKZ}}]
\label{prop:smoothn}
Let $A \subset \bZ^{n+1}$ be a finite set whose affine span is all of $\bZ^{n+1}$, and let $f_A$ be a Laurent polynomial of the form
\[
f(z) = \sum_{m \in A} a_m z^m
\]
There is a Zariski dense open subset $U_A \subset \bC^{|A|}$ such that, when the $(a_m)_{m \in A}$ are chosen from $U_A$, the variety $V(f_A)$ is smooth and its diffeomorphism type depends only on the convex hull of $A$.
\end{proposition}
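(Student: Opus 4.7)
The plan is to combine Bertini's theorem (for smoothness) with a stratified version of Ehresmann's theorem (for constancy of diffeomorphism type). Consider the universal hypersurface
\[
\cZ = \bigl\{(z,a) \in (\bC^*)^{n+1} \times \bC^{|A|} \,\bigm|\, F(z,a) = 0\bigr\}, \qquad F(z,a) = \sum_{m \in A} a_m z^m.
\]
Since $\partial F/\partial a_m = z^m$ is nowhere vanishing on $(\bC^*)^{n+1}$, the function $F$ is a submersion and $\cZ$ is smooth. Applying generic smoothness to the projection $\pi\colon \cZ \to \bC^{|A|}$ produces a Zariski open dense set of parameters for which $V(f_a) := \pi^{-1}(a)$ is smooth.

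To promote this to diffeomorphism invariance I would compactify. Since the affine span of $A$ is $\bZ^{n+1}$, the polytope $\sympol = \conv(A)$ is full-dimensional, and I choose a smooth projective toric variety $\bP_\sympol$ refining the normal fan of $\sympol$. Each torus orbit $O_\tau \subset \bP_\sympol$ corresponds to a face $\tau$ of $\sympol$, and the closure $\overline{V(f_a)}$ meets $O_\tau$ in the zero locus of the face polynomial $f_\tau = \sum_{m \in A \cap \tau} a_m z^m$. The same submersion argument applied orbit-by-orbit shows that the set of $a$ with $V(f_\tau) \cap O_\tau$ smooth is Zariski open dense; intersecting over all faces yields a Zariski open dense $U_A \subset \bC^{|A|}$. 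Over $U_A$ the compactified family $\overline{\cZ} \subset \bP_\sympol \times \bC^{|A|}$ is a proper stratified submersion, so Thom's first isotopy lemma produces a stratified topological trivialization over any contractible open set. Because $U_A$ is connected in the classical topology (Zariski open in an irreducible variety), all fibers $V(f_a)$ for $a \in U_A$ are mutually diffeomorphic.

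For dependence only on $\sympol$, let $B = \sympol \cap \bZ^{n+1}$; then any $A, A'$ with $\conv(A) = \conv(A') = \sympol$ satisfy $A, A' \subset B$. Extending coefficients by zero embeds $\bC^{|A|} \hookrightarrow \bC^{|B|}$, and each face polynomial depends only on the coefficients indexed by lattice points of the face, so $a \in U_A$ if and only if $(a,0) \in U_B$. Thus $U_A \times \{0\}$ and $U_{A'} \times \{0\}$ both lie in the connected set $U_B$, and applying the isotopy argument over $U_B$ shows that any two smooth hypersurfaces with Newton polytope $\sympol$ defined by generic coefficients are diffeomorphic.

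The hardest step is the second one: simultaneously arranging smoothness of $\overline{V(f_a)}$ and its transverse intersection with every toric stratum of $\bP_\sympol$. Each of these is a generic condition on the relevant face-coefficients, but combining them into a single Zariski open $U_A$ requires a careful face-by-face Bertini argument tracking which coefficients act nontrivially on each stratum. Once those genericity conditions are consolidated, the remainder is a standard application of stratified fibration theorems such as Thom's first isotopy lemma.
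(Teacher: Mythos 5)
The paper does not prove this proposition; it quotes it as a known result and refers to GKZ (Ch.~10, Cor.~1.7). Your argument reconstructs the standard proof underlying that citation (it is essentially the Batyrev--Dwork $\sympol$-regularity argument alluded to in Remark~\ref{remgeneric}): compactify in a toric variety associated to $\sympol$, impose face-by-face transversality as a Zariski-open dense condition on the coefficients, and conclude by a stratified isotopy. The approach is correct. Two places should be tightened. First, Thom's first isotopy lemma by itself produces only a stratum-preserving \emph{homeomorphism}; to conclude that the open strata $V(f_a)$ are mutually \emph{diffeomorphic}, you should invoke that the trivialization is obtained by integrating a controlled vector field that is smooth along each stratum, so its flow restricts to a diffeomorphism on each stratum -- in particular on the open one. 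Second, the final step ``$a\in U_A \Leftrightarrow (a,0)\in U_B$'' is valid only because you are implicitly \emph{defining} $U_A$ as the locus where every face polynomial $f_\tau$ cuts out a smooth subvariety of $O_\tau$ (a condition intrinsic to the face polynomials, which are the same for $A$ and for $B$ extended by zero); if $U_A$ were taken to be an unspecified Zariski-open set furnished abstractly by a Bertini theorem, the biconditional would not follow. You do in fact make this choice -- ``intersecting over all faces yields $U_A$'' -- but since it is the hinge of the last paragraph, it deserves to be stated as the definition. A small further remark: passing to a smooth refinement $\bP_\sympol$ of the normal fan is not needed -- the possibly singular toric variety of the normal fan itself is Whitney stratified by orbits, and those orbits correspond bijectively to faces of $\sympol$; if you do use a refinement, note explicitly that transversality to the refined strata follows from transversality to the coarse ones, since the refined orbits sit over the original ones.
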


\begin{remark} 
\label{remgeneric}
The precise condition that we mean by ``generic'' in Theorem \ref{thm:1.1} is as follows.
If $\overline Z$ denotes the closure of $Z$ in the projective toric variety $\PP_\sympol$ associated to $\sympol$
then we require $\overline Z\cap O$ to be either empty or smooth and reduced for each torus orbit $O\subset\PP_\Delta$. 
If this holds, $\overline Z$ is called $\sympol$-regular, a notion coined by Batyrev and Dwork, see \cite[Def. 3.3]{Ba93}.
Note that for each cell $\tau\in\cT$, we may consider the weighted projective space $\PP_\tau$ associated to $\tau$ 
and have a hypersurface $\overline Z_\tau\subset \PP_\tau$ given by the polynomial $f_\tau=\sum_{m\in\tau} a_m z^m$.
Now, we may state the precise definition of generic used in Thm~\ref{thm:1.1}:
we call $Z$ generic if $\overline Z$ is $\sympol$-regular and for each $\tau\in\cT$, $\overline Z_\tau$ is $\tau$-regular. 
The set of generic hypersurfaces forms an open subset of all hypersurfaces justifying the notion \emph{generic}.
\end{remark}

\subsubsection{Polytopes and triangulations}

An intersection of finitely many affine half-spaces in a finite-dimensional vector space is called \emph{polyhedron}. If it is compact, it is called \emph{polytope}. A polytope is the convex hull of its vertices.
Given a subset $A$ of a vector space, we denote its convex hull by $\conv A$.
Throughout, we let $M$ denote a free abelian group isomorphic to $\ZZ^{n+1}$ and set $M_\RR=M\otimes_\ZZ\RR \cong \RR^{n+1}$.
A polytope $\sympol \subset M_\bR$ is called a \emph{lattice polytope} if its vertices are in $M$.
We use the symbol $\subset$ for the face relation, e.g., $\tau\subset\sympol$ means that $\tau$ is a face of $\sympol$. 
The relative interior of a polytope $\tau$ will be denoted $\tau^\circ$.
Let $\PS$ denote the boundary of $\sympol$.  
A lattice triangulation $\cT_\sympol$ of a polytope $\Delta$ is a triangulation by lattice simplices.
Such a triangulation is called regular if there is a piecewise affine function convex function $h:\Delta\ra\RR$ such that the non-extendable closed domains where $h$ is affine linear coincide with the maximal simplices in $\cT_{\sympol}$.
We write $\cT_\sympol^{[0]}$ for the set of vertices of $\cT_\sympol$, and if $\tau$ is a simplex of $\cT_\sympol$ we write $\tau^{[0]}$ for the vertices of $\tau$.

\subsubsection{Monoids and affine toric varieties}
\label{toricgeomandmonoids}

We denote by $\Spec R$ the spectrum of a commutative ring $R$.  When $R$ is a noetherian commutative algebra over $\bC$. We will often abuse notation by using the same symbol $\Spec R$ for the associated complex analytic space and $\cO$ for $\cO^{\mathrm{an}}$.  Given $f_1,...,f_r\in R$, we write $V(f_1,...,f_r)$ for the subvariety of $\Spec R$ defined by the equations $f_1=...=f_r=0$.

A \emph{monoid} is a set with an associative binary operation that has a unit and a two-sided identity.  For us, all monoids will be commutative.  Given a monoid $\mathsf{M}$ with an action on a set $V$, we write $\mathsf{M} T$ for the orbit of a subset $T\subseteq V$. We often use this when $V$ is an $\RR$-vector space, $T$ some subset and $\mathsf{M}=\RR_{\ge0}$ the non-negative reals.  Further notation for monoids is discussed in Section \ref{subsec:logdefs}.

By a \emph{cone} $\sigma \subset M_\bR$ we shall always mean a rational polyhedral cone, i.e. a set of the form
\[
\{ \sum_{i \in I} \lambda_i v_i \mid \lambda_i \in \bR_{\geq 0} \}
\]
where $\{v_i\}_{i \in I}$ is a finite subset of lattice vectors in $M_\bR$.  A cone is called \emph{strictly convex} if it contains no nonzero linear subspace of $M_\bR$.  Gordon's Lemma \cite[p. 12]{Fu} states that the monoid $M \cap \sigma$ is finitely generated.  The monoid ring $\bC[M \cap \sigma]$ is then noetherian.  For $m \in M \cap \sigma$ we write $z^m$ for the corresponding basis element of $\bC[M \cap \sigma]$; it can be regarded as a \emph{regular monomial} function $\Spec \bC[M \cap \sigma] \to \bC$.

We have the following standard device for describing points on an affine toric variety.
If $x$ is a point of $\Spec \bC[M \cap \sigma]$, write $\mathrm{ev}_x:M \cap \sigma \to \bC$ for the map
\[
\mathrm{ev}_x(m) = z^m \text{ evaluated at $x$}
\]
Each $\mathrm{ev}_x$ is a homomorphism of monoids from $M \cap \sigma$ to $(\bC,\times)$.   The universal property of the monoid ring gives the following
\begin{proposition}
\label{prop:specpoints}
Let $\sigma$ be a rational polyhedral cone in $M_\bR$.  Then $x \mapsto \mathrm{ev}_x$ is a one-to-one correspondence between the complex points of $\Spec \bC[M \cap \sigma]$ and the monoid homomorphisms $M \cap \sigma \to \bC$.
\end{proposition}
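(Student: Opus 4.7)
The plan is to recognize this as a direct application of the universal property of the monoid algebra $\bC[M\cap\sigma]$, combined with the standard identification of complex points of an affine scheme with $\bC$-algebra maps to $\bC$. There is essentially no geometric content beyond unwinding definitions; the work is bookkeeping.

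First I would recall that a complex point of $\Spec \bC[M \cap \sigma]$ is, by definition, a $\bC$-algebra homomorphism $\varphi_x \colon \bC[M\cap\sigma] \to \bC$ (equivalently, a maximal ideal with residue field $\bC$). Under this identification, $\varphi_x$ sends a regular function $f$ to its value $f(x)$, so in particular $\varphi_x(z^m) = z^m(x) = \mathrm{ev}_x(m)$. Thus $\mathrm{ev}_x$ is literally the restriction of $\varphi_x$ to the basis of monomials.

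Next I would invoke the universal property of the monoid ring: for any commutative ring $R$, restriction along the canonical inclusion $M\cap\sigma \hra \bC[M\cap\sigma]$, $m \mapsto z^m$, gives a bijection
\[
\Hom_{\bC\text{-alg}}(\bC[M\cap\sigma], R) \;\xrightarrow{\sim}\; \Hom_{\mathrm{Mon}}(M\cap\sigma, (R,\times)).
\]
The forward map sends $\varphi$ to $m \mapsto \varphi(z^m)$, which is a monoid homomorphism because $z^0 = 1$ and $z^{m+m'} = z^m z^{m'}$ in $\bC[M\cap\sigma]$. The inverse extends a monoid homomorphism $\psi \colon M\cap\sigma \to (\bC,\times)$ linearly to $\sum_m c_m z^m \mapsto \sum_m c_m \psi(m)$, which is well-defined and multiplicative precisely because the $z^m$ are a $\bC$-basis and because $\psi$ respects the monoid structure. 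Specializing to $R = \bC$ gives the claimed bijection between $\bC$-points and monoid homomorphisms $M\cap\sigma \to \bC$, and by the previous paragraph this bijection is realized by $x \mapsto \mathrm{ev}_x$.

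There is no real obstacle here; the only mild subtlety worth flagging in the writeup is that one should note $M\cap\sigma$ is finitely generated (by Gordon's lemma, already cited just above), so that $\bC[M\cap\sigma]$ is genuinely a finitely generated $\bC$-algebra and $\Spec$ of it has the expected complex points via the Nullstellensatz. With that in hand, the proof is a one-line appeal to the universal property.
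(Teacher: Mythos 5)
Your proof is correct and matches the paper's approach exactly: the paper states the proposition with no proof beyond the remark that it follows from the universal property of the monoid ring, and your writeup simply spells that out, together with the (reasonable) observation that the Nullstellensatz identifies closed points with $\bC$-algebra maps to $\bC$.
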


\subsection*{Acknowledgments}  
We thank Gabriel Kerr for his help with the case of nonreflexive polytopes. 
We thank Nir Avni, Johan de Jong, Grigory Mikhalkin, Sam Payne and Bernd Siebert for helpful discussions.
The work of HR is supported by DFG-SFB-TR-45 and the Carl Zeiss Foundation.  The work of DT is supported by NSF-DMS-1206520.  The work of EZ is supported by NSF-DMS-1104779 and by a Simons Foundation Fellowship.

\section{Degenerations of hypersurfaces}
\label{sec:toricdegeneration}

We fix a lattice polytope $\sympol \subset M_\bR$ with $0\in \sympol$. 
Let $K\subset M_\bR$ be a convex subset.
A continuous function
$h:K\ra\RR$ is called \emph{convex} if for each $m,m'\in K$ and we have
\[
\frac{h(m) + h(m')}{2} \geq h\left(\frac{m + m'}{2}\right).
\]
We fix a lattice triangulation $\cT_\sympol$ of $\sympol$ with the following property:
$0\in \cT_{\sympol}^{[0]}$ and there exists a convex piecewise linear function $h:\bR_{\geq 0}\sympol\rightarrow \bR$
taking non-negative integral values on $M$ such that
the maximal dimensional simplices in $\cT_\sympol$ coincide with the non-extendable closed domains of linearity of $h|_\sympol$. 
We also choose such a function, $h$.
Triangulations with this property are often called \emph{regular} or \emph{coherent}.  Every lattice polytope
containing the origin
supports a regular lattice triangulation.  Since $h$ is linear on the $(n+1)$-simplices of $\cT_{\sympol}$, this triangulation is ``star-shaped with center $0$'' in the sense that each simplex in $\cT_{\sympol}$ is contained in $\PS$ or contains the origin $0$. 
We define the triangulation $\cT$ by 
\[
\cT=\{\tau\in\cT_\sympol\;|\;\tau\subset\PS,0\not\in\tau\},
\]
i.e., the set of simplices of $\sympol$ not containing the origin.  We denote the union of all $\tau \in \cT$ by $|\cT|$, and sometimes by $\PS'.$
Since $\cT$ induces $\cT_\sympol$,
we call $\cT$ regular if the induced $\cT_\sympol$ is regular.

We fix a Laurent polynomial $f\in\CC[M]$ of the form
\begin{equation}
\label{eq:f}
f = a_0 + \sum_{m \in \cT^{[0]}} a_m z^m.
\end{equation}
We suppose that all coefficients are real, that $a_0 < 0$, that $a_m >0$ for $m \in \cT^{[0]}$, and that they are chosen generically with this property.  
We write $V(f) \subset \Spec\CC[M]$ for the hypersurface in the algebraic torus defined by $f = 0$.

\begin{remark}
\label{rem:AZarDense}
Since the positivity conditions on the $a_m$ are Zariski dense, it follows by Proposition \ref{prop:smoothn} that ${V}(f)$ is smooth and diffeomorphic to any generic hypersurface whose Newton polytope is $\sympol$.
\end{remark}

Using the piecewise linear function $h$, we can give a toric degeneration of $(\CC^*)^{n+1}$ and an induced degeneration of $V(f)$ in the style of Mumford.
We construct this degeneration in Sections \ref{sec:31} and \ref{sec:32}.

\begin{remark}
\label{rem:originboundary}

In case the origin is on the boundary of $\sympol$, it is natural to embed $V(f)$ into the following partial compactification of $(\bC^*)^{n+1}$.  The polytope $\sympol$ generates a cone $\bR_{\geq 0} \sympol \subset M_\bR$.  The cone is not usually strictly convex, e.g. if $0 \in \sympol^\circ$ then this cone is all of $M_\bR$.  In any case, $f$ is always a linear combination of monomials in $\bR_{\geq 0} \sympol \cap M$ and defines a hypersurface in $\Spec \bC[M \cap \bR_{\geq 0} \sympol]$ which we denote by $\overline{V}(f)$.  If $0 \in \sympol^\circ, \overline{V}(f) = V(f).$
\end{remark}

\subsection{Degeneration of the ambient space}
\label{sec:31}

The total space of the degeneration will be an affine toric variety $Y$ closely related to the affine cone over the projective toric variety whose moment polytope is $\sympol$.  More precisely, it is an affine subset of the affine cone over a blowup of this toric variety.  The construction makes use of the \emph{overgraph cone} in $M_\bR \oplus \bR$, coming from the piecewise-linear function $h$.

\subsubsection{The overgraph cone}
\label{subsubsec:Gamma}

Let $\Sigma_\cT$ be the fan in $M_\bR$ whose nonzero cones are the cones over the simplices in $\cT$, i.e.,
\[
\Sigma_\cT =\{ \RR_{\ge0}\tau | \tau\in\cT \}.
\]
When $0$ is an interior lattice point,  $\Sigma_\cT$ is a complete fan. 
In general its support is the cone $\bR_{\geq 0} \sympol$.

Since $\cT$ is regular, $\Sigma_{\cT}$ is projected from part of the boundary of a rational polyhedral cone in $M_\bR \oplus \bR$.  We fix such a cone and call it the \emph{overgraph cone}.  Let us define it more precisely.
Set $\tiM=M\oplus\ZZ$ and $\tiM_{\bR}=\tiM\otimes_\ZZ\RR$.
The overgraph cone of $h$
is defined to be
\[
\Gamma_{\geq h} = \{(m,r) \in \tiM_{\bR} \mid m \in \bR_{\geq 0}\sympol,\; r \geq h(m)\}
\]

Each cone in $\Sigma_\cT$ is isomorphic to a proper face of $\Gamma_{\geq h}$ under the projection $\tiM_\bR \to M_\bR$.  The inverse isomorphism is given by $m \mapsto (m,h(m))$.  Since $h$ takes integral values on $M$, the faces of $\Gamma_{\geq h}$ that appear in this way form a rational polyhedral fan in $\tiM$.  We record this observation in the following lemma:

\begin{lemma}
\label{lem:Gammahtau}
Let $\bR_{\geq 0} \tau$ be a cone in $\Sigma_{\cT}$ and let $\Gamma_{\geq h,\tau} \subset \Gamma_{\geq h}$ be the face
\[
\Gamma_{\geq h,\tau} = \{(m,h(m)) \in \Gamma_{\geq h} \mid m \in \bR_{\geq 0} \tau\}. 
\]
Then the projection $\Gamma_{\geq h,\tau} \to \bR_{\geq 0}\tau$ is an isomorphism of cones inducing and isomorphism of monoids
$\Gamma_{\geq h,\tau} \cap \tiM \to \bR_{\geq 0} \tau \cap M$.
\end{lemma}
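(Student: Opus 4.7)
The plan is to exploit the key structural observation that $h$ is linear (not merely piecewise linear) on the full cone $\bR_{\geq 0}\tau$. Since $\tau \in \cT$ is contained in some maximal simplex $\sigma \in \cT$, the cone $\bR_{\geq 0}\tau$ lies in the maximal cone $\bR_{\geq 0}\sigma$ of $\Sigma_\cT$, and the latter is by construction a domain of linearity of $h$. Thus there is a linear functional $\lambda : M_\bR \to \bR$ extending $h|_{\bR_{\geq 0}\sigma}$ to all of $M_\bR$; in particular $h = \lambda$ on $\bR_{\geq 0}\tau$.

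With $\lambda$ in hand, I would verify the face claim first. By convexity of $h$ on $\bR_{\geq 0}\sympol$ combined with the equality $h = \lambda$ on the top-dimensional cone $\bR_{\geq 0}\sigma$, one gets $h \geq \lambda$ on all of $\bR_{\geq 0}\sympol$. Then the linear functional $\ell(m,r) = r - \lambda(m)$ is non-negative on $\Gamma_{\geq h}$ (since $r \geq h(m) \geq \lambda(m)$) and vanishes exactly on the locus where $m \in \bR_{\geq 0}\sigma$ and $r = h(m)$, which is $\Gamma_{\geq h, \sigma}$. Hence $\Gamma_{\geq h, \sigma}$ is a face of $\Gamma_{\geq h}$, and $\Gamma_{\geq h, \tau}$ is a face of $\Gamma_{\geq h, \sigma}$ (cut out by whichever functionals define the face $\bR_{\geq 0}\tau \subset \bR_{\geq 0}\sigma$, pulled back under projection), hence a face of $\Gamma_{\geq h}$.

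Next I would establish the cone isomorphism by exhibiting explicit mutually inverse maps. Define $\phi : \bR_{\geq 0}\tau \to \tiM_\bR$ by $\phi(m) = (m, h(m)) = (m, \lambda(m))$; since $\lambda$ is linear, $\phi$ is a linear map, and its image is exactly $\Gamma_{\geq h, \tau}$ by definition. The ambient projection $p : \tiM_\bR \to M_\bR$, $(m,r)\mapsto m$, restricted to $\Gamma_{\geq h, \tau}$, satisfies $p \circ \phi = \mathrm{id}$ on $\bR_{\geq 0}\tau$ and $\phi \circ p = \mathrm{id}$ on $\Gamma_{\geq h, \tau}$ (because every element of $\Gamma_{\geq h, \tau}$ is tautologically of the form $(m, h(m))$). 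This gives the desired isomorphism of cones.

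Finally, I would transfer this to lattice points. Since $h$ takes integer values on $M$, for $m \in \bR_{\geq 0}\tau \cap M$ we have $h(m)\in\bZ$, so $\phi(m) \in \tiM$; conversely any $(m,r) \in \Gamma_{\geq h,\tau}\cap \tiM$ has $m \in M \cap \bR_{\geq 0}\tau$ automatically. Thus $\phi$ and $p$ restrict to mutually inverse bijections between $\bR_{\geq 0}\tau \cap M$ and $\Gamma_{\geq h, \tau} \cap \tiM$, and both are additive, so this is a monoid isomorphism. No step presents a serious obstacle; the only content is the linearity of $h$ on $\bR_{\geq 0}\tau$, which is guaranteed by the compatibility of $\cT$ with the regular function $h$.
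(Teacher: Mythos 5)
Your proof is correct and takes essentially the same approach as the paper, which simply observes in the paragraph preceding the lemma that the projection restricts to an isomorphism on each cone of $\Sigma_\cT$ with inverse $m\mapsto(m,h(m))$, and that integrality of $h$ on $M$ gives the lattice statement. You supply the details the paper leaves implicit, in particular the supporting linear functional $\ell(m,r)=r-\lambda(m)$ establishing the face claim and the explicit mutually inverse maps.
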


\subsubsection{Degeneration}
\label{subsubsec:Y}
The overgraph cone determines an affine toric variety that we denote by $Y$, i.e.
\[
Y = \Spec \bC[\Gamma_{\geq h} \cap \tiM]
\]
Define $\pi:Y \to \AA^1$ to be the map given by the regular monomial function $t=z^{(0,1)}$ on $Y$.  Let $Y_0 \subset Y$ denote the fiber $\pi^{-1}(0)$.  Since $t$ is a monomial, $Y_0$ is torus invariant in $Y$, but in general has many irreducible components.  Let us call the components of $\pi^{-1}(0)$ the \emph{vertical divisors} of the map $\pi$ and then call the remaining toric prime divisors \emph{horizontal divisors}.

\begin{remark}
\label{rem:specpoints}
Since $Y$ is an affine toric variety, we can identify the points of $Y$ (by Proposition \ref{prop:specpoints}) with the space of monoid homomorphisms $(\tiM \cap \Gamma_{\geq h},+) \to (\bC,\times)$.  In this description, $Y_0$ is the subset of monoid homomorphisms $\phi:\tiM \cap \Gamma_{\geq h} \to \bC$ carrying $(0,1)$ to $0$.
\end{remark}

\begin{proposition}
\label{prop:ambientdegeneration}
The map $\pi:Y \to \AA^1$ has the following properties:
\begin{enumerate}
\item $\pi^{-1}(\bC^*)=\Spec \CC[(\RR_{\ge0}\sympol)\cap M]\times\CC^*$ and the restriction of $\pi$ to $\pi^{-1}(\bC^*)$ is the projection onto the second factor.

\item The subscheme structure on $Y_0 = \pi^{-1}(0)$ is reduced.
\item $\pi$ is a toric degeneration of $\Spec \CC[(\RR_{\ge0}\sympol)\cap M]$. The restriction of $\pi$ to the complement of the union of horizontal divisors is a degeneration of $\Spec \CC[M]\cong(\CC^*)^{n+1}$.
\end{enumerate}
\end{proposition}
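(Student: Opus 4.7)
The strategy is to reduce all three parts to direct manipulations in the monoid $P := \Gamma_{\geq h} \cap \tiM$ and the distinguished monomial $t = z^{(0,1)}$, using Proposition \ref{prop:specpoints} and Lemma \ref{lem:Gammahtau} throughout. The elementary foundation is the unique decomposition
\[
(m,r) = (m, h(m)) + (r - h(m))(0,1), \qquad r - h(m) \in \bZ_{\geq 0},
\]
valid for every $(m,r) \in P$, which on the ring side says that every monomial factors uniquely as $z^{(m,r)} = t^{r-h(m)} z^{(m,h(m))}$.

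Parts (1) and (3) are quick corollaries of this factorisation. For (1), inverting $t$ replaces $P$ by $((\bR_{\geq 0}\sympol) \cap M) \oplus \bZ \cdot (0,1)$ as a monoid, yielding $\bC[P][t^{-1}] \cong \bC[(\bR_{\geq 0}\sympol) \cap M] \otimes_\bC \bC[t,t^{-1}]$ with $\pi$ as projection onto the second factor. For (3), the factorisation exhibits $\bC[P]$ as a free $\bC[t]$-module on the basis $\{z^{(m, h(m))}\}_{m \in (\bR_{\geq 0}\sympol) \cap M}$, so $\pi$ is flat, and combined with (1) it is a toric degeneration of $\Spec \bC[(\bR_{\geq 0}\sympol) \cap M]$. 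For the last clause, I would check that the horizontal toric divisors of $Y$ correspond to facets $F$ of $\Gamma_{\geq h}$ containing $(0,1)$, which are precisely the ``vertical'' facets sitting over facets of $\bR_{\geq 0}\sympol$. Under the product decomposition of (1), removing them from $\pi^{-1}(\bC^*)$ strips off the toric boundary of $\Spec \bC[(\bR_{\geq 0}\sympol) \cap M]$, leaving $\Spec \bC[M] \times \bC^*$.

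The substantive work is part (2). The identification $z^{(m,r)} \in (t) \Leftrightarrow r > h(m)$ gives $R := \bC[P]/(t)$ the $\bC$-basis $\{z^{(m, h(m))} : m \in (\bR_{\geq 0}\sympol) \cap M\}$, and convex piecewise-linearity of $h$ forces the product $z^{(m, h(m))} z^{(m', h(m'))}$ to survive in $R$ exactly when $m, m'$ lie in a common maximal cone $\bR_{\geq 0}\tau$ of $\Sigma_\cT$. For each maximal $\tau \in \cT$ I define
\[
\pi_\tau : R \longrightarrow \bC[(\bR_{\geq 0}\tau) \cap M], \qquad z^{(m, h(m))} \longmapsto \begin{cases} z^m & m \in \bR_{\geq 0}\tau, \\ 0 & \text{otherwise},\end{cases}
\]
and the combined map $R \to \prod_\tau \bC[(\bR_{\geq 0}\tau) \cap M]$ is injective because every basis element is hit (for $z^{(m, h(m))}$, choose any maximal $\tau$ with $m \in \bR_{\geq 0}\tau$). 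Each factor is an integral domain, so once each $\pi_\tau$ is shown to be a ring homomorphism, $R$ embeds into a reduced ring and is therefore reduced.

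The single delicate step is multiplicativity of $\pi_\tau$, which reduces to the following claim about the simplicial fan $\Sigma_\cT$: if $m, m'$ lie in a common maximal cone $\bR_{\geq 0}\tau''$ and $m + m' \in \bR_{\geq 0}\tau$, then already $m, m' \in \bR_{\geq 0}\tau$. Writing $m = \sum a_i v_i$ and $m' = \sum b_i v_i$ in the extremal rays $v_i$ of the simplicial cone $\bR_{\geq 0}\tau''$, the hypothesis forces $m + m' \in \bR_{\geq 0}(\tau \cap \tau'')$, which is a face of $\bR_{\geq 0}\tau''$; therefore $a_i + b_i = 0$ (and hence $a_i = b_i = 0$) whenever $v_i \notin \tau \cap \tau''$, proving the claim. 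This simpliciality argument, which rests on $\cT$ being a lattice triangulation, is the one genuinely combinatorial input of the proof, and is the step I expect to require the most care.
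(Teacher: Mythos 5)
Your proof is correct, and it takes essentially the same route as the paper's: the unique factorization $(m,r) = (m,h(m)) + (r-h(m))(0,1)$ is exactly the key lemma the paper records, and your argument for (1) (localize at $t$, fold the $\bZ(0,1)$-factor into $\bC[\ZZ]$) matches the paper verbatim.  The difference is one of explicitness rather than method.

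For (2), the paper states only the unique decomposition and leaves the remaining work implicit; you fill the gap by exhibiting the monomial basis $\{z^{(m,h(m))}\}_{m\in(\RR_{\ge 0}\sympol)\cap M}$ of $\bC[P]/(t)$ and then embedding this ring into the product $\prod_\tau \bC[(\RR_{\ge 0}\tau)\cap M]$ of integral domains.  That embedding is the right thing to do, and your multiplicativity argument is correct.  One remark: the simpliciality of $\bR_{\ge 0}\tau''$ is not actually needed for the step you flag as delicate.  The general fact is that for any face $F$ of any convex cone $C$ and any $x,y\in C$, $x+y\in F$ forces $x,y\in F$: if $\langle\cdot,n\rangle\ge 0$ on $C$ and $=0$ on $F$, then $\langle x,n\rangle + \langle y,n\rangle = 0$ with both terms nonnegative.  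Applied with $C = \Gamma_{\ge h}$ and $F = \Gamma_{\ge h,\tau}$, this shows directly that each projection $\bC[P]\to\bC[\Gamma_{\ge h,\tau}\cap\tiM]$ is a ring map, making the coordinate computation in the simplicial cone unnecessary (though certainly not wrong).

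For the second clause of (3), your description (horizontal divisors $\leftrightarrow$ facets of $\Gamma_{\ge h}$ containing $(0,1)$) is the dual of the paper's normal-fan formulation and is the correct one; note that the one-line proof of Proposition 2.5 in the paper appears to state the characterization with the wrong parity (``vertical if and only if it contains $(0,1)$''), which conflicts with the statement of that proposition and with your analysis — your version is the right one.  Your flatness argument via the free $\bC[t]$-module structure makes the meaning of ``degeneration'' in the first clause of (3) precise, which the paper takes for granted.
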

\begin{proof}
Localizing to $\pi^{-1}(\bC^*)$ means adjoining $t^{-1}$ to the ring $\bC[\Gamma_{\geq h} \cap \tiM]$ which yields \linebreak 
$\bC[((\RR_{\ge0}\sympol)+\RR(0,1)) \cap \tiM]=\bC[(\RR_{\ge0}\sympol)\cap M]\otimes_\CC\CC[\ZZ]$. This gives the first statement in (3) as well as (1). 

To prove (2), note that since $h$ takes integral values on $M$, 
any element $m\in\Gamma_{\geq h}\cap \tiM$ can uniquely be written as
\[
m' + k (0,1)
\]
with $k\in\ZZ_{\geq 0}$ and $ m'$ in $\Gamma_{\geq h,\tau}\cap \tiM$ for some $\tau\in\cT$.
The second statement in (3) is best seen in the fan picture. If $\Sigma$ is the normal fan of $\Gamma_{\geq h}$, removing the horizontal divisors amounts to restricting to the subfan 
$\Sigma'\subseteq\Sigma$ of cones that have no rays contained in $(0,1)^\perp$. The map $\pi$ is given by mapping $\Sigma'$ to the fan $\{\{0\},\RR_{\ge 0}\}$ and $\{0\}\in\Sigma'$ is the only cone that maps to $\{0\}$, so the general fiber is indeed an algebraic torus.
\end{proof}

Let us describe the vertical 
and the horizontal 
divisors in more detail.  

\begin{proposition}
\label{prop:vhdivisors}
Let $Y$ and $\pi$ be as above and for each $\tau \in \cT$ let $\Gamma_{\geq h,\tau}$ be as in Lemma \ref{lem:Gammahtau}.
\begin{enumerate}
\item The assignment $\tau \mapsto \Spec \bC[\Gamma_{\geq h,\tau}]$ is a bijection between the vertical divisors of $\pi$ and the $n$-dimensional simplices of $\cT$.
\item The assignment 
\[
\tau \mapsto \Spec\CC[(\RR_{\ge 0}\{(m,h(m))|m\in\tau\} +\RR_{\ge 0}(0,1))\cap \tiM]
\]
is a bijection between the horizontal divisors of $\pi$ and the $n$-dimensional simplices $\tau$ of $\cT_{\sympol}$ with $0 \in \tau$ and $\tau\subset\partial\sympol$.
\end{enumerate}
\end{proposition}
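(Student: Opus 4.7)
The plan is to exploit the standard toric-geometric correspondence: since $Y = \Spec \bC[\Gamma_{\geq h} \cap \tiM]$ is a normal affine toric variety, its torus-invariant prime divisors are in bijection with the codimension-one faces (facets) of $\Gamma_{\geq h}$, the divisor attached to a facet $\sigma$ being $\Spec \bC[\sigma \cap \tiM]$. Moreover, such a divisor is vertical (contained in $\pi^{-1}(0) = V(t)$) precisely when the character $(0,1)$ of $t = z^{(0,1)}$ fails to belong to $\sigma$. The proof therefore reduces to classifying the facets of $\Gamma_{\geq h}$ according to whether $(0,1)$ lies on them.

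I would first exhibit two families of facets and check that they are of the two desired types. For a top-dimensional $\tau' \in \cT$, letting $\ell_{\tau'}$ denote the linear extension of $h|_{\bR_{\geq 0}\tau'}$, convexity of $h$ gives $h \geq \ell_{\tau'}$ globally on $\bR_{\geq 0}\sympol$, so $r - \ell_{\tau'}(m) \geq 0$ is a supporting inequality; regularity of $\cT$ (non-extendability of the linear pieces) makes these irredundant, and the resulting face is the facet $\Gamma_{\geq h, \tau'}$ from Lemma \ref{lem:Gammahtau}. Because $h(0) = 0 \neq 1$, the character $(0,1)$ lies off this facet, yielding a vertical divisor. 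For an $n$-simplex $\tau$ of $\cT_\sympol$ with $0 \in \tau$ and $\tau \subset \partial\sympol$, the simplex $\tau$ lies on a unique facet $F_\tau$ of $\sympol$ through the origin, and I would combine the outward normal of $\bR_{\geq 0} F_\tau$ with the linear piece of $h$ on $\bR_{\geq 0}\tau$ to produce a supporting hyperplane whose associated face is the cone $\bR_{\geq 0}\{(m, h(m)) : m \in \tau\} + \bR_{\geq 0}(0,1)$; this cone has $(0,1)$ as an extreme ray, yielding a horizontal divisor.

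Exhaustion would follow from a case analysis on a general supporting functional $\ell(m, r) = u \cdot m + v r$ with $v \geq 0$: the case $v > 0$ (normalize $v = 1$) forces $u = -\ell_{\tau'}$ for some top-dimensional $\tau' \in \cT$, giving a bottom facet; the case $v = 0$ makes $u$ a facet normal for $\bR_{\geq 0}\sympol$, and the piecewise-linear refinement imposed by $h$ then recovers the side facets. The main obstacle will be the side-facet step: verifying that the cones in the horizontal claim are genuine facets of $\Gamma_{\geq h}$, not merely sub-cones of a larger face. This delicate point turns on the strict convexity of $h$ across the triangulation $\cT_\sympol$ together with the facet structure of $\bR_{\geq 0}\sympol$, and is where the regularity hypothesis on $\cT$ enters crucially. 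Once the facets are classified, the orbit-closure formula $\overline{O_\sigma} = \Spec \bC[\sigma \cap \tiM]$ recovers the displayed expressions in the proposition directly.
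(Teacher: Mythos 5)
Your approach is essentially the same as the paper's: identify the toric prime divisors of $Y$ with the facets (codimension-one faces) of the monoid cone $\Gamma_{\geq h}$, then classify a facet as giving a vertical or horizontal divisor according to whether $t = z^{(0,1)}$ vanishes on the corresponding orbit closure, i.e.\ whether $(0,1)$ lies off or on the facet. The paper compresses this to three sentences.

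One thing worth noting is that you have the direction of the dichotomy right where the paper's written proof does not: you say a facet $\sigma$ is vertical precisely when $(0,1) \notin \sigma$, which is correct (the ideal of $D_\sigma$ is generated by the $z^m$ with $m \notin \sigma$, so $D_\sigma \subset V(t)$ iff $(0,1) \notin \sigma$), and which is also consistent with the explicit assignments in the Proposition --- $(0,1) \in \Gamma_{\geq h,\tau}$ would force $h(0) = 1$, but $h(0)=0$. The paper's proof literally says ``vertical if and only if it contains $(0,1)$'', which is a sign error. Your proof also supplies the facet-by-facet convexity and regularity checks and the exhaustion-by-supporting-functionals step that the paper leaves entirely implicit, which is the honest content of the argument.

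On the point you flag as ``the main obstacle'': the verification of the side facets is indeed where some care is needed, and in fact slightly more than you (or the paper) acknowledge. The face of $\Gamma_{\geq h}$ carved out by a facet normal $u$ of $\bR_{\geq 0}\sympol$ is $\{(m,r) : m \in F, \; r \geq h(m)\}$, where $F = u^\perp \cap \bR_{\geq 0}\sympol$. Your description as $\bR_{\geq 0}\{(m,h(m)) : m \in \tau\} + \bR_{\geq 0}(0,1)$ coincides with this only when $\bR_{\geq 0}\tau = F$, i.e.\ when the single simplex $\tau$ positively spans the whole facet $F$. If a facet of $\sympol$ through the origin is subdivided by $\cT_\sympol$ into more than one $n$-simplex (for instance when that facet is not itself a simplex), the displayed cone is a proper subface and does not give a divisor; it would also give more than one candidate $\tau$ for the same facet, breaking injectivity. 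This is really an imprecision in the statement of part (2) rather than a gap in your proof, and the paper's own proof does not address it either, but it is worth being aware that the clean bijection in (2) should be read as one between horizontal divisors and facets of $\bR_{\geq 0}\sympol$ (equivalently, facets of $\sympol$ containing $0$), with the displayed formula valid verbatim only when $\cT_\sympol$ does not subdivide those facets.
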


\begin{proof}
The toric prime divisors in $Y$ correspond to the codimension one faces of $\Gamma_{\ge h}$. 
Such a face corresponds to a vertical divisor if and only if it contains $(0,1)$. This implies (1) and (2).
\end{proof}

\begin{example}
\label{ex:degeneration}
For a simple illustrative example, take $\sympol = \mathrm{conv}\{(0,0),(2,0),(0,2)\},$ with lattice points 
named as follows:
\[
\begin{array}{ccccccc}
\bullet & \bullet & \bullet & \bullet & \bullet \\
\bullet &e & \bullet & \bullet & \bullet \\
\bullet &c & d & \bullet & \bullet\\
\bullet &0 & a & b & \bullet \\
\bullet & \bullet & \bullet & \bullet & \bullet 
\end{array}
\]
Then $\partial \sympol'$ is
the line segment between $b$ and $e,$ and let us take $\cT$ to be the fine triangulation with maximal
simplices $\overline{bd}$ and $\overline{de}$.  Then $\Sigma_\cT$ is supported in the first
quadrant, and its maximal cones are generated by $\{b,d\}$ and $\{d,e\}.$  Let $h$ be the piecewise linear
function supported on $\Sigma_\cT$ with $$h(a) = 1,\quad h(c) = 1,\quad h(d) = 1.$$
Then the ring $\bC[\Gamma_{\geq h}\cap \widetilde{M}]$ of regular functions on $Y$ can be identified with
$\bC[a,c,d,t]/(ac-dt).$ (We identify $a$ with $z^at^{h(a)}$ and so on.)
For $\lambda\neq 0$, the fiber $\pi^{-1}(\lambda)$ can be identified with $\bC^2$ via $(a,c) \leftrightarrow (a,c,ac/\lambda,\lambda),$
as it must from Proposition \ref{prop:ambientdegeneration} (1) since $\Spec \bC[(\bR_{\geq 0}\sympol)\cap M] \cong \bC^2.$
 
Setting $t=0$ gives $Y_0$ as $\bC[a,c,d]/ac$ which reveals the vertical divisors
as $V(c,t) \cong \bC^2 = \{(a,d)\}$ and $V(a,t)\cong \bC^2 = \{(c,d)\}.$
The horizontal divisors are $V(a,d)$ and $V(c,d)$, which can also be identified with $\bC^2.$
In Example \ref{ex:hypersurface} we shall return to this example to consider hypersurface degenerations
when we have a polynomial $f$ with $\mathrm{Newt}(f)=\sympol.$
\end{example}

\begin{remark}
In these examples we have used coordinates on $Y$ indexed by lattice points in $\sympol$.  This is always possible for $n \leq 1$, but for larger $n$ the coordinate ring of $Y$ can require many more generators.
\end{remark}

\subsubsection{Orbit closures in $Y_0$}
\label{sec:orbitclosures}
For each $\tau \in \cT$ let $Y_{0,\tau}$ be the $(\dim(\tau)+1)$-dimensional affine toric variety
\[
Y_{0,\tau} = \Spec(\bC[\tiM \cap \Gamma_{\geq h,\tau}])
\]
where $\Gamma_{\geq h,\tau}$ is defined in Lemma \ref{lem:Gammahtau}.  Since $\Gamma_{\geq h,\tau}$ is a face of $\Gamma_{\geq h}$, $Y_{0,\tau}$ is a torus orbit closure in $Y$.  
Each vertical divisor of $t$ is of the form $Y_{0,\tau}$ where $\tau$ is an $n$-dimensional simplex of $\cT$ by Proposition~\ref{prop:vhdivisors}.

Restricting regular functions from  $Y$ to $Y_{0,\tau}$ induces the ring quotient map
\[
\bC[\Gamma_{\geq h} \cap \tiM] \to \bC[\Gamma_{\geq h,\tau} \cap \tiM]
\]
whose kernel is the ideal generated by monomials $z^{(m,r)}$ with $(m,r) \notin \Gamma_{\geq h,\tau}$.  By Lemma \ref{lem:Gammahtau}, we may identify $\bC[\Gamma_{\geq h,\tau} \cap \tiM]$ with $\bC[\bR_{\geq 0} \tau \cap M]$.  

\subsubsection{Projection onto $\bP^{\dim(\tau)}$}

The action of $\Hom(M,\bC^*)$ on $Y_{0,\tau}$ factors through an action of the quotient torus $\Hom(\bR\tau \cap M,\bC^*)$.   We now define a finite subgroup $D_\tau \subset \Hom(\bR \tau \cap M,\bC^*)$ which will play an important role for us.

\begin{definition}
\label{def:Dtau}
Let $D_\tau$ be the finite commutative group
\[
D_\tau = \Hom((M\cap \bR \tau)\big/ \bZ \tau^{[0]},\bC^*) 
\]
We regard $D_\tau$ as a subgroup of $\Hom(M\cap \bR \tau,\bC^*)$, and let it act on the coordinate ring of $Y_{0,\tau} = \Spec \bC[M \cap \bR_{\geq 0} \tau]$ by
\[
d . z^m = d(m)z^m
\]
\end{definition}

\begin{proposition}
\label{prop:Dtauinvariant}
The invariant subring
\[
\bC[ M \cap \bR_{\geq 0} \tau ]^{D_\tau} \subset \bC[ M \cap \bR_{\geq 0} \tau ]
\]
is the monoid ring $\bC[\bZ_{\geq 0} \tau^{[0]}]$.  In other words, it is a polynomial ring whose $\dim(\tau)+1$ variables are parameterized by the vertices $\tau^{[0]}$ of $\tau$.
\end{proposition}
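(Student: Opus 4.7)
The plan is to exploit the fact that the action of $D_\tau$ is diagonal on the monomial basis $\{z^m\}_{m \in M \cap \bR_{\geq 0}\tau}$, so that the invariant ring is simply $\bC$-spanned by those monomials fixed pointwise by $D_\tau$. Identifying which these are will come down to Pontryagin duality for finite abelian groups, and the polynomial ring structure will follow from the fact that the vertices of a simplex $\tau$ not meeting the origin form a lattice basis of $\bR\tau$.

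Concretely, I would proceed as follows. First, since each $d \in D_\tau$ sends $z^m$ to $d(m) z^m$, the invariant subring is the $\bC$-span of those $z^m$, $m \in M \cap \bR_{\geq 0}\tau$, for which $d(m) = 1$ for every $d \in D_\tau$. By the definition of $D_\tau = \Hom(M \cap \bR\tau / \bZ\tau^{[0]}, \bC^*)$ and the fact that the double dual of a finite abelian group is itself, the characters in $D_\tau$ separate nontrivial elements of $M \cap \bR\tau / \bZ\tau^{[0]}$; hence $d(m)=1$ for all $d \in D_\tau$ if and only if $m$ lies in the sublattice $\bZ \tau^{[0]}$. So
\[
\bC[M \cap \bR_{\geq 0}\tau]^{D_\tau} = \bigoplus_{m \in \bZ\tau^{[0]} \cap \bR_{\geq 0}\tau} \bC \cdot z^m .
\]

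Next I would identify this intersection with $\bZ_{\geq 0} \tau^{[0]}$. Since $\tau \in \cT$ does not contain the origin, the vertices $v_0, \ldots, v_k$ of $\tau$ (with $k = \dim\tau$) span a cone $\bR_{\geq 0}\tau$ of dimension $k+1$; being a simplex in this sense, they are linearly independent in $\bR\tau$ and thus form a $\bZ$-basis of the lattice $\bZ\tau^{[0]}$. If $m \in \bZ\tau^{[0]} \cap \bR_{\geq 0}\tau$, then writing $m = \sum a_i v_i$ with $a_i \in \bZ$ and also $m = \sum \lambda_i v_i$ with $\lambda_i \geq 0$ forces $a_i = \lambda_i \geq 0$ by uniqueness of the expansion in the basis $\{v_i\}$. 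Consequently $\bZ\tau^{[0]} \cap \bR_{\geq 0}\tau = \bZ_{\geq 0}\tau^{[0]}$, and the invariant subring equals the monoid ring $\bC[\bZ_{\geq 0}\tau^{[0]}]$, which is the polynomial ring $\bC[z^{v_0}, \ldots, z^{v_k}]$ in $\dim(\tau)+1$ variables.

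There is no substantial obstacle; the only point that requires care is the linear independence of $\tau^{[0]}$ in $\bR\tau$, which would fail if $\tau$ contained the origin (the vertices would then be merely affinely independent). The hypothesis $0 \notin \tau$ built into the definition of $\cT$ is exactly what is needed, and I would be sure to flag this in the write-up.
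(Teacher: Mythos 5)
Your proof is correct and follows essentially the same route as the paper: both observe that the monomials $z^m$ are a basis of $D_\tau$-eigenvectors, reducing the claim to identifying the set of $m \in M \cap \bR_{\geq 0}\tau$ fixed by every character. The only stylistic difference is that you invoke Pontryagin duality abstractly to conclude $\{m : d(m)=1 \ \forall d\} = \bZ\tau^{[0]}$ and then intersect with the cone, whereas the paper writes down the explicit dual basis of characters $d_i$ (defined by $d_i(v_j) = e^{2\pi i \delta_{ij}}$) and reads off integrality and nonnegativity of the coefficients of $m = \sum a_i v_i$ in one step; the two arguments are equivalent, and your remark flagging the essential role of $0 \notin \tau$ in making $\tau^{[0]}$ linearly independent is exactly the right point of care.
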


\begin{proof}
The monomials $z^m$ for $M \cap \bR_{\geq 0} \tau$ form a basis of eigenvectors for the $D_\tau$-action on $\bC[M \cap \bR_{\geq 0} \tau]$.  The invariants are therefore generated by those monomials $z^m$ for which $d(m) = 1$ for all $d \in D_\tau$.  Each vertex of $\tau$ has this property, and thus
\[
\bZ_{\geq 0} \tau^{[0]} \subset  \{m \in \bR_{\geq 0} \tau \cap M \mid d(m) = 1 \text{ for all $d \in D_\tau$}\}
\]
Let us show the containment is an equality, i.e. that for each $m \in M \cap \bR_{\geq 0}\tau$, if the monomial $z^m$ is $D_\tau$-invariant then $m$ is a $\bZ_{\geq 0}$-linear combination of the vertices of $\tau$.  This
follows from the fact that $\tau^{[0]}$ is a basis 
for the vector space $\bR\tau$, and that each element of $M \cap \bR_{\geq 0} \tau$ 
can be written in this basis with coefficients in $\bQ_{\geq 0}$.
Indeed, let
$v_0,v_1,\ldots,v_{\dim(\tau)}$ be the vertices of $\tau$ and for $i = 0,\ldots,\dim(\tau)$ define $d_i$ by  $d_i(v_j) = \delta_{i,j}.$  Suppose that $z^m$ is a $D_\tau$-invariant monomial.
Then since $m = \sum a_i v_i$ where each $a_i$ is in $\bQ_{\geq 0}$, we have $d_j(m) = e^{2\pi i a_j} = 1$ for all $j,$ i.e. $a_j\in \bZ_{\geq 0}.$
\end{proof}

\begin{proposition}
\label{prop:Y0taubasepoint}
The fiber of the $D_\tau$-quotient map
\[
Y_{0,\tau} \to \bC^{\dim(\tau) + 1}
\]
above $0 \in \bC^{\dim(\tau) + 1}$ is a single point.
\end{proposition}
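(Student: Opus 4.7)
My plan is to work at the level of $\bC$-points via Proposition \ref{prop:specpoints}, which identifies points of $Y_{0,\tau} = \Spec \bC[M \cap \bR_{\geq 0}\tau]$ with monoid homomorphisms $\phi : M \cap \bR_{\geq 0}\tau \to (\bC,\times)$. Under the $D_\tau$-quotient map, the target $\bC^{\dim(\tau)+1} = \Spec\bC[\bZ_{\geq 0}\tau^{[0]}]$ has its origin $0$ cut out by the ideal generated by the vertex monomials $z^{v_0},\ldots,z^{v_{\dim(\tau)}}$, where $v_0,\ldots,v_{\dim(\tau)}$ are the vertices of $\tau$ (as in the proof of Proposition \ref{prop:Dtauinvariant}). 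Thus a point $\phi$ of $Y_{0,\tau}$ lies in the fiber over $0$ if and only if $\phi(v_i) = 0$ for every $i$.

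Next I would show that any such $\phi$ is forced to satisfy $\phi(m) = 0$ for every nonzero $m \in M \cap \bR_{\geq 0}\tau$, so that the fiber consists of the single homomorphism sending $0 \mapsto 1$ and every other element of the monoid to $0$. The key input is the same rationality statement used in the proof of Proposition \ref{prop:Dtauinvariant}: since $\tau^{[0]}$ is an $\bR$-basis of $\bR\tau$, any $m \in M \cap \bR_{\geq 0}\tau$ has a unique expansion $m = \sum_i a_i v_i$ with $a_i \in \bQ_{\geq 0}$. Clearing denominators gives a positive integer $N$ with $Nm = \sum_i b_i v_i$, where $b_i \in \bZ_{\geq 0}$; if $m \neq 0$ then some $b_i > 0$.

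Applying $\phi$ and using that it is a monoid homomorphism gives
\[
\phi(m)^N = \phi(Nm) = \prod_i \phi(v_i)^{b_i} = 0,
\]
since at least one factor $\phi(v_i)^{b_i}$ is $0^{b_i} = 0$. Hence $\phi(m) = 0$. Combined with $\phi(0) = 1$, this determines $\phi$ uniquely, so the fiber is a single point.

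The proof is essentially a direct computation once the right language is set up, and I do not anticipate a genuine obstacle; the only subtlety worth flagging is the use of the rationality of the expansion $m = \sum a_i v_i$, which in turn rests on the fact that $\tau$ is a simplex (so its vertices are $\bR$-linearly independent) rather than a more general lattice polytope.
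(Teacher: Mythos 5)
Your proof takes essentially the same approach as the paper: identify points of $Y_{0,\tau}$ with monoid homomorphisms via Proposition \ref{prop:specpoints}, note that lying over $0$ forces $\phi$ to vanish on the vertices, and conclude that $\phi$ must send $0 \mapsto 1$ and every other element to $0$. The paper simply asserts that this extension is unique; you correctly supply the missing verification by clearing denominators to get $Nm = \sum b_i v_i$ and observing $\phi(m)^N = 0 \Rightarrow \phi(m) = 0$.
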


Note there is a mild abuse of notation here: the coordinates of $\bC^{\dim(\tau)+1}$ are not indexed by the integers $1,\ldots,n+1$ but the vertices of $\tau$.

\begin{proof}
We use the description of Proposition \ref{prop:specpoints}.  The origin in $\bC^{\dim(\tau)+1}$ corresponds to the monoid homomorphism $\bZ_{\geq 0} \tau^{[0]} \to \bC$ that  carries each vertex of $\tau$ (and in fact each nonzero element of $\bZ_{\geq 0} \tau^{[0]}$) to $0 \in \bC$.  To prove the Proposition, it suffices to show that this extends to a monoid map $M \cap \bR_{\geq 0} \tau \to \bC$ in a unique way.   Indeed, this is the map that carries $0$ to $1$ and each nonzero element of $M \cap \bR_{\geq 0} \tau$ to $0$.
\end{proof}

Since the $D_\tau$-invariant ring $\bC[\bZ_{\geq 0} \tau^{[0]}]$ is a polynomial ring, we may endow it with a grading by declaring that $\deg(z^m) = 1$ whenever $m$ is a vertex of $\tau$.

\begin{definition}
\label{def:projYtoPtau}
Let $0 \in Y_{0,\tau}$ and $0 \in \Spec(\bC[\bZ_{\geq 0} \tau^{[0]}])$ denote the points of Proposition \ref{prop:Y0taubasepoint}.  We define a space $\bP^{\dim(\tau)}$ and a map $\pi_{\tau}:Y_{0,\tau} \setminus \{0\} \to \bP^{\dim(\tau)}$ as follows:
\begin{enumerate}
\item We let $\bP^{\dim(\tau)} = \Proj(\bC[\bZ_{\geq 0} \tau^{[0]})$, where the grading on the coordinate ring is indicated above.  In other words, $\bP^{\dim(\tau)}$ is a projective space whose homogeneous coordinates are naturally indexed by the vertices of $\tau$.
\item
We let $q_{\tau}:Y_{0,\tau} \setminus \{0\} \to \bP^{\dim(\tau)}$ denote the composite map
\[
Y_{0,\tau} \setminus \{0\} \to \bC^{\dim(\tau)+1} \setminus \{0\} \to \bP^{\dim(\tau)}
\]
where the first map is the $D_\tau$-quotient map of Proposition \ref{prop:Dtauinvariant} and the second map is the tautological map.
\end{enumerate}
\end{definition}
Note the abuse of notation in (1): if $\dim(\tau) = \dim(\tau')$ we will usually regard $\bP^{\dim(\tau)}$ as different from $\bP^{\dim(\tau')}$.

\subsection{Degeneration of the hypersurface}
\label{sec:32}

In Proposition \ref{prop:ambientdegeneration}, we have seen 
that the general fiber of $\pi:Y \to \AA^1$ is isomorphic to $\Spec\CC[M \cap \RR_{\ge0}\sympol]$.  We now describe a degeneration of $\overline{V}(f) \subset \Spec\CC[(\RR_{\ge0}\sympol)\cap M]$ contained 
in the family $\pi:Y\rightarrow \AA^1$. The total space of the degeneration is the hypersurface in $Y$ cut out by a regular function $\tilde{f}$ on $Y.$ On the open orbit of $Y$, $\tilde{f}$ looks like
\[
\tilde{f} = a_0 + \sum_{m\in \cT^{[0]}} a_m  z^{(m,h(m))} = a_0 + \sum_{m \in \cT^{[0]}} a_m z^m t^{h(m)}
\]
where the $a_m$ are the same coefficients as in $f$ (Equation \ref{eq:f}).  Denote the vanishing locus of $\tilde{f}$ by $X = V(\tilde{f})$.

\begin{remark}
\label{rem:originboundary2}
When $0$ is in the interior of $\sympol$, $X$ is a degeneration of $V(f)$. When $0$ is on the boundary, $X$ is a degeneration of $\overline{V}(f) \supset V(f)$ defined in Remark \ref{rem:originboundary}.
\end{remark}

\begin{example}
\label{ex:hypersurface}
 We return to the setting of Example \ref{ex:degeneration} to study the associated degeneration of
 the smooth hypersurface defined by the polynomial $f = -1 + x^2 + xy + y^2.$
 Note that $\Spec\bC[(\bR_{\geq 0}\sympol)\cap M]\cong \bC^2$, so we will degenerate both
 $\overline V(f)$ and inside it $Z = V(f) \subset \bC^*\times \bC^*.$  In Example \ref{ex:c*c*skeleton}
 we shall study the skeleton of $Z$, and in Example \ref{ex:ccskeleton} we will turn to investigate  
 the skeleton of $\overline{V}(f)$ in $\bC^2$.

The function $\tilde f: Y\rightarrow \bC$ is written $\tilde f = -1 + b + d + e$.
Recalling from Example 2.7 that $b = z^{(2,0)}t^{h(2,0)} = x^2t^2$, $d = z^{(1,1)}t^{h(1,1)} = xyt$, and $ e = z^{(0,2)}t^{h(0,2)} = y^2t^2$, we see that $\tilde f$ specializes to $f$ on $\pi^{-1}(1)$.
\end{example}

The restriction of $\tilde{f}$ to $Y_{0,\tau}$ is the image of $\tilde{f}$ under the ring quotient map
\[
\bC[\Gamma_{\geq h} \cap \tiM] \to \bC[\Gamma_{\geq h,\tau} \cap \tiM]
\]
that carries $z^{(m,r)}$ to itself if $(m,r) \in \Gamma_{\geq h,\tau}$ and to $0$ otherwise.  In other words, $\tilde{f}\vert_{Y_{0,\tau}}$ is given by
\[
\tilde{f}\vert_{Y_{0,\tau}} = a_0 + \sum_{m \in \tau^{[0]}} a_m z^{(m,h(m))}
\]
Let us denote the image of $\tilde{f}\vert_{Y_{0,\tau}}$ under the identification $Y_{0,\tau}=\Spec\bC[\bR \tau \cap M]$ by $f_\tau$.  We record this in the following definition:

\begin{definition}
\label{def:ftau}
Let $a_m$ be the coefficients of $f$ (Equation \ref{eq:f}).
\begin{enumerate}
\item
Let $f_\tau \in \bC[\bR_{\geq 0} \tau \cap M]$ denote the expression
\[
f_\tau = a_0 + \sum_{m \in \tau^{[0]}} a_m z^m
\]
regarded as a regular function on $Y_{0,\tau}$.  Let $X_{0,\tau}$ be the hypersurface in $Y_{0,\tau}$ cut out by $f_\tau$.
\item
Let $\ell_\tau \in \bC[\bZ_{\geq 0} \tau^{[0]}]$ denote the expression
\[
\ell_\tau = \sum_{m \in \tau^{[0]}} a_m z^m
\]
regarded as a homogeneous linear function on $\bP^{\dim(\tau)}$.    Let $V(\ell_\tau) \subset \bP^{\dim(\tau)}$ denote the hyperplane cut out by $\ell_\tau$.
\end{enumerate}
\end{definition}

\begin{proposition}
\label{prop:degenaffine}
Fix $\tau\in\cT$ and denote by
$p_\tau : X_{0,\tau}\ra \PP^{\dim(\tau)}$ the composition \[
X_{0,\tau} \hookrightarrow Y_{0,\tau} \setminus \{0\} \stackrel{q_{\tau}}\to \bP^{\dim(\tau)}
\]
where the second map is the projection of Definition \ref{def:projYtoPtau}.  Then
\begin{enumerate}
\item $p_\tau$ is a finite proper surjection onto the affine space $\bP^{\dim(\tau)} \setminus V(\ell_\tau) \cong \bC^{\dim(\tau)}$.  
\item $p_\tau$ induces an isomorphism 
\[X_{0,\tau}/D_\tau \cong \bP^{\dim(\tau)} \setminus V(\ell_\tau)\]
where $D_\tau$ is as in Definition \ref{def:Dtau}.
\item The ramification locus of $p_\tau$ is contained in the coordinate hyperplanes of $\bP^{\dim(\tau)}$.
\end{enumerate}
\end{proposition}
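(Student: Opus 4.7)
The plan is to reduce everything to the $D_\tau$-quotient map $Y_{0,\tau} \to \bC^{\dim(\tau)+1}$ of Proposition~\ref{prop:Dtauinvariant}, through which $p_\tau$ is designed to factor. First I would verify that $X_{0,\tau}$ avoids the distinguished point $0 \in Y_{0,\tau}$, so that $p_\tau$ is defined on all of $X_{0,\tau}$: by Proposition~\ref{prop:Y0taubasepoint} and the description of points as monoid homomorphisms, evaluating $f_\tau$ at $0$ yields $a_0 \neq 0$. Since each monomial $z^m$ with $m \in \tau^{[0]}$ is $D_\tau$-invariant by Proposition~\ref{prop:Dtauinvariant}, $f_\tau$ itself is $D_\tau$-invariant and descends under the quotient to the affine-linear function $a_0 + \ell_\tau$ on $\bC^{\dim(\tau)+1}$.

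Next I would compute $X_{0,\tau}/D_\tau$ explicitly. Because $f_\tau$ lies in the invariant subring, standard invariant theory (or the Reynolds averaging trick, in characteristic zero) identifies $X_{0,\tau}/D_\tau$ with the affine hyperplane $H := \{a_0 + \ell_\tau = 0\} \subset \bC^{\dim(\tau)+1}$. Since $a_0 \neq 0$, $H$ avoids the origin, and the tautological projection $\bC^{\dim(\tau)+1} \setminus \{0\} \to \bP^{\dim(\tau)}$ restricts to a regular isomorphism from $H$ onto $\bP^{\dim(\tau)} \setminus V(\ell_\tau)$, with inverse sending a representative $[z]$ to $-a_0 z / \ell_\tau(z)$. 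This gives the isomorphism claimed in (2), and since $D_\tau$ is finite the composite $p_\tau$ is a finite surjection (hence automatically proper), yielding (1).

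For (3), I would use the fact that $D_\tau$ sits inside the torus $\Hom(M \cap \bR\tau, \bC^*)$ acting on $Y_{0,\tau}$, and any subgroup of a torus acts freely on the open torus orbit. Hence the ramification locus of the quotient map $Y_{0,\tau} \to \bC^{\dim(\tau)+1}$ is confined to the toric boundary of $Y_{0,\tau}$, i.e.\ to the union of prime toric divisors indexed by the rays $\bR_{\geq 0} v_i$ for $v_i \in \tau^{[0]}$. Under the quotient each such divisor maps into a coordinate hyperplane $\{z^{v_i} = 0\}$ of $\bC^{\dim(\tau)+1}$, hence into a coordinate hyperplane of $\bP^{\dim(\tau)}$. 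Intersecting with $X_{0,\tau}$ and projecting shows that the branch locus of $p_\tau$ lies in the union of coordinate hyperplanes.

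The main technical point I expect to need care with is confirming that $X_{0,\tau}/D_\tau$ really is scheme-theoretically (and not merely set-theoretically) the hyperplane $H$; this reduces to showing that for the $D_\tau$-invariant principal ideal $(f_\tau) \subset \bC[M \cap \bR_{\geq 0}\tau]$, taking invariants commutes with the quotient, which follows from Reynolds averaging. Everything else is a transparent consequence of the invariant-theoretic identification of $p_\tau$ as the composite of a finite Galois cover with an affine chart on $\bP^{\dim(\tau)}$.
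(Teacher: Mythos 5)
Your proposal is correct and follows essentially the same route as the paper: factor $p_\tau$ through the $D_\tau$-quotient, identify $\bP^{\dim(\tau)}\setminus V(\ell_\tau)$ with the affine hyperplane $\{a_0+\ell_\tau=0\}$ whose coordinate ring is $\bC[\bZ_{\geq 0}\tau^{[0]}]/(a_0+\ell_\tau)$, and use exactness of $D_\tau$-invariants (your Reynolds operator is exactly the paper's ``taking $D_\tau$-invariants preserves exact sequences'') to show this ring is $R_2^{D_\tau}$ where $R_2$ is the coordinate ring of $X_{0,\tau}$. For (3) both arguments reduce to freeness of the $D_\tau$-action on the open torus orbit of $Y_{0,\tau}$.
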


\begin{proof}
The following implicit assertions of the Proposition are trivial to verify:
\begin{itemize}
\item since $a_0 \neq 0$, the point $0 \in Y_{0,\tau}$ of Proposition \ref{prop:Y0taubasepoint} does not lie on $X_{0,\tau}$.
\item since the monomials that appear in $f_\tau$ belong to $\bZ_{\geq 0} \tau^{[0]}$, they are invariant under the action of $D_\tau$.   In particular $X_{0,\tau}$ is invariant under 
$D_\tau$.
\end{itemize}
Note that (1) is a consequence of (2).  Since $a_0 \neq 0$ the function $f_\tau = a_0 + \ell_\tau$ cannot vanish anywhere that $\ell_\tau$ vanishes.  Therefore the image of $p_{\tau}$ is contained in $\bP^{\dim(\tau)} \setminus V(\ell_\tau)$.  To complete the proof of (2), let us show that the affine coordinate ring $R_1$ of $\bP^{\dim(\tau)} \setminus V(\ell_\tau)$ is the $D_\tau$-invariant subring of the affine coordinate ring $R_2$ of $X_{0,\tau}$.  We have
\[
\begin{array}{ccc}
R_1 & = & \bC[\bZ_{\geq 0} \tau^{[0]}] / (a_0 + \ell_\tau) \\
R_2 & = & \bC[M \cap \bR_{\geq 0} \tau] / (f_\tau)
\end{array}
\]
and the short exact sequences
\[
\begin{array}{c}
\xymatrix{
0 \ar[r] & \bC[\bZ_{\geq 0}\tau^{[0]}] \ar[r]^{a_0 + \ell_\tau} & \bC[\bZ_{\geq 0}\tau^{[0]}] \ar[r] & R_1 \ar[r] & 0
} \\
\xymatrix{
0 \ar[r] & \bC[M \cap \bR_{\geq 0} \tau] \ar[r]^{f_\tau} & \bC[M \cap \bR_{\geq 0} \tau] \ar[r] & R_2 \ar[r] & 0 
}
\end{array}
\]

Part (2) of the Proposition is now a consequence of the observation that taking $D_\tau$-invariants preserves exact sequences, and that $\bC[\bZ_{\geq 0} \tau] = \bC[M \cap \bR_{\geq 0} \tau]^{D_\tau}$ by Proposition \ref{prop:Dtauinvariant}.

Now let us prove (3).  Let $H \subset \bP^{\dim(\tau)}$ be the union of coordinate hyperplanes.  By (2), to show that $p_\tau$ is unramified away from $H$ it suffices to show that $D_\tau$ acts freely on $X_{0,\tau}$ away from $p_\tau^{-1}(H)$.  In fact $D_\tau$ acts freely on $Y_{0,\tau} \setminus q_\tau^{-1}(H)$.  This completes the proof.

\end{proof}

\subsection{Degeneration of the compact hypersurface}
\label{sec:tordegcompact}

The families $\pi:Y \to \AA^1$ and $\pi:X \to \AA^1$ of Sections \ref{sec:31} and \ref{sec:32} have fairly natural algebraic relative compactifications (i.e., ``properifications'' of the maps $\pi$) that we review here.

We define the polyhedron
$$\overline\Gamma = \{(m,r)\in\tiM_\RR\mid m\in\sympol, r\ge h(m)\}$$
which is contained in $\Gamma_{\geq h}$.
We set $\tiN=\Hom(\tiM,\ZZ)$, $\tiN_\RR=\tiN\otimes_\ZZ\RR$.
The \emph{normal fan} of $\overline\Gamma$ is the fan 
$\Sigma_{\overline\Gamma}=\{ \sigma_\tau\mid \tau\subset\overline\Gamma \}$
where
$\sigma_\tau=\{n\in \tiN_\RR\mid \langle m-m',n\rangle\ge 0\text{ for all }m\in\overline\Gamma,m'\in\tau\}$ and $\langle\cdot,\cdot\rangle:\tiM\otimes \tiN\ra\ZZ$ is the natural pairing.
Let $\overline Y$ denote the toric variety associated to $\Sigma_{\overline\Gamma}$.
It is covered by the set of affine open charts of the shape $\Spec\CC[\sigma_\tau^\dual\cap \tiM]$ where $\tau\in\overline\Gamma^{[0]}$ and 
$$\sigma_\tau^\dual=\RR_{\ge 0} \{m-m'\mid m\in\overline\Gamma,m'\in\tau\}\subset \tiM_\RR$$
is the dual cone of $\sigma_\tau$.
Note that $\sigma^\dual_0=\Gamma_{\geq h}$, so we have an open embedding $Y\subseteq \overline Y$.
Since $(0,1)\in\sigma^\dual_\tau$ for all $\tau\subset\overline\Gamma$, $\pi$ extends to a regular function
$$\pi:\overline Y\ra \AA^1.$$
The support of $\Sigma_{\overline\Gamma}$ is $\{(n,r)\in\tiN_\RR\mid r\ge 0\}$ and pairing with the monomial $(0,1)$ sends this to $\RR_{\ge0}$. Thus by the Proposition in \S2.4 of \cite{Fu}, we have
\begin{lemma}$\pi:\overline Y\ra \AA^1$ is proper.\end{lemma}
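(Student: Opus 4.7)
My plan is to apply the standard toric criterion for properness of equivariant maps, namely the proposition in \S2.4 of \cite{Fu}: if $\phi:\widetilde N \to \ZZ$ is the lattice map inducing a toric morphism from the toric variety of a fan $\Sigma$ in $\widetilde N_\RR$ to the toric variety of a fan $\Sigma'$ in $\RR$, then the morphism is proper if and only if $\phi_\RR^{-1}(|\Sigma'|) = |\Sigma|$. In our setting the target $\AA^1$ corresponds to the fan $\{\{0\},\RR_{\geq 0}\}$ in $\RR$, whose support is $\RR_{\geq 0}$, and $\pi$ is induced by the monomial $z^{(0,1)}$, i.e. by the character $1 \mapsto (0,1)$ on $\tiM$, whose dual is the projection $\phi_\RR:\tiN_\RR \to \RR$, $(n,r)\mapsto r$. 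Thus I need to show that the support of $\Sigma_{\overline\Gamma}$ equals $\phi_\RR^{-1}(\RR_{\geq 0}) = \{(n,r)\in\tiN_\RR\mid r\geq 0\}$.

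First I would identify the recession cone of $\overline\Gamma = \{(m,r)\in\tiM_\RR\mid m\in\sympol,\ r\geq h(m)\}$. Since $\sympol$ is a compact polytope and $h$ is a continuous piecewise linear function on $\sympol$, the only unbounded directions in $\overline\Gamma$ are positive multiples of $(0,1)$, so the recession cone is exactly $\RR_{\geq 0}\cdot(0,1)$. Next I would invoke the standard fact that the support of the normal fan of a polyhedron $P$ consists of those covectors $n$ for which the linear functional $\langle\cdot,n\rangle$ attains a minimum on $P$; equivalently, the support of the normal fan is the dual of the recession cone of $P$. Dualizing $\RR_{\geq 0}\cdot(0,1)$ gives $\{(n,r)\in\tiN_\RR\mid r\geq 0\}$, matching the required preimage exactly.

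Finally, I would assemble the pieces: since $|\Sigma_{\overline\Gamma}|=\phi_\RR^{-1}(|\Sigma'|)$, the criterion in \cite{Fu} applies and $\pi:\overline Y\to \AA^1$ is proper. I would also remark that the compatibility hypothesis needed by the criterion, namely that every cone of $\Sigma_{\overline\Gamma}$ maps into some cone of $\{\{0\},\RR_{\geq 0}\}$, is automatic here because $(0,1)$ pairs nonnegatively with every element of $|\Sigma_{\overline\Gamma}|$, so every cone lands in $\RR_{\geq 0}$.

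The only step that requires real care is the computation of the support of the normal fan; I expect this to be routine given the explicit description of $\overline\Gamma$ as an overgraph. Everything else is bookkeeping with the lattice map induced by $t = z^{(0,1)}$.
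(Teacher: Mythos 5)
Your proof is correct and follows the same route as the paper: both identify the support of the normal fan $\Sigma_{\overline\Gamma}$ as the upper half-space $\{(n,r)\in\tiN_\RR \mid r\ge 0\}$ and then invoke the properness criterion from \S2.4 of \cite{Fu}. The only difference is that you supply the justification for that support computation (via the recession cone of $\overline\Gamma$ and the dual-cone description of the normal fan's support), whereas the paper simply states it.
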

\begin{corollary} Let $\overline{X}$ denote the closure of $X$ in $\overline Y$.  Then $\pi:\overline X\ra\AA^1$, the restriction of $\pi$ to $\overline X$, is proper.
\end{corollary}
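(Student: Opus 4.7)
The plan is straightforward: deduce properness of $\pi\vert_{\overline X}$ from the preceding lemma by a standard composition/base-change argument.

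First, I would observe that by its definition, $\overline X$ is the closure of $X$ in $\overline Y$, hence comes equipped with a closed immersion $i:\overline X \hookrightarrow \overline Y$. Closed immersions are always proper (they are finite, in particular universally closed and of finite type, and separated as a closed subscheme of a separated scheme). The restriction $\pi\vert_{\overline X}$ is by definition the composition
\[
\overline X \xrightarrow{\;i\;} \overline Y \xrightarrow{\;\pi\;} \AA^1.
\]
Since properness is stable under composition, and the preceding lemma establishes that $\pi:\overline Y\to\AA^1$ is proper, this composition is proper.

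Alternatively, and perhaps cleanly, one can phrase it as a base change: $\overline X \to \AA^1$ fits into the pullback of $\pi:\overline Y \to \AA^1$ along itself restricted to the closed subscheme $\overline X \subset \overline Y$; since properness is preserved by base change and closed immersions, the restriction is proper.

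I do not expect any real obstacle here — the content is entirely in the previous lemma, which handles the ambient toric compactification via the description of the support of $\Sigma_{\overline\Gamma}$ and the reference to \cite[\S2.4]{Fu}. The corollary is a purely formal consequence, and the only thing worth spelling out in the write-up is that $\overline X \hookrightarrow \overline Y$ is a closed immersion by construction (as the scheme-theoretic closure of $X$).
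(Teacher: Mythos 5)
Your argument is correct and is exactly the standard deduction the paper has in mind: $\overline X \hookrightarrow \overline Y$ is a closed immersion (hence proper), and composing with the proper map $\pi:\overline Y\to\AA^1$ from the preceding lemma gives properness of $\pi|_{\overline X}$. The paper gives no written proof for this corollary precisely because it is this immediate consequence, so your write-up matches the intended reasoning.
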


\section{The skeleton}
\label{sec:two}

\subsection{Definition of the skeleton}
\label{sec:skel}
We adopt the notation from \S\ref{sec:toricdegeneration}, in particular $M\cong \bZ^{n+1}$ is a lattice and $\sympol \subset
M_\bR = M\otimes_\bZ \bR$ a lattice polytope containing $0$ and with
regular lattice triangulation $\cT$ of $\PS'$. 
For $x \in \PS'$, let us denote by $\tau_x$ the lowest-dimensional simplex of $\cT$ containing $x$.

\begin{definition}
\label{def:skeleton}
With $\sympol$, $\cT$, and $x \mapsto \tau_x$ given as above,
define the topological subspace 
\[
\begin{array}{ccc}
S_{\sympol,\triang}\subset\PS' \times \Hom(M,S^1)
\end{array}
\] 
to be the set of pairs $(x,\phi)$ satisfying
\[
\phi(v) = 1 \in S^1 \text{ whenever $v \in M$ is a vertex of $\tau_x$}.
\]
\end{definition}

The fibers of the projection $S_{\sympol,\triang} \to \PS$ are constant above the interior of each simplex of $\cT$.  In fact these fibers are naturally identified with a subgroup of the torus $\Hom(M,S^1)$.  Let us introduce some notation for these fibers:

\begin{definition}
\label{def:Gtau}
For each simplex $\tau \in \cT,$ let $G_\tau$ denote the commutative group contained in the torus 
$\Hom(M,S^1)$ given by
\[
G_\tau := \{\phi \in \Hom(M,S^1) \mid \phi(v) = 1 \text{ whenever $v \in M$ is a vertex of $\tau$}\}
\]
We denote the identity component of $G_\tau$ by $A_\tau$ and the discrete quotient $G_\tau/A_\tau = \pi_0(G_\tau)$ by $D_\tau$. 
That is, we have the short exact sequence of abelian groups
\begin{equation} \label{groupexseq}
1\to A_\tau\to G_\tau\to D_\tau \to 1.
\end{equation}
\end{definition}
This sequence can also be obtained by applying the exact contravariant functor $\Hom(\cdot,S^1)$ to the sequence
\begin{equation*} \label{groupexseq2}
0\leftarrow M\big/((\RR\tau)\cap M)\leftarrow M\big/(\ZZ\tau^{[0]})\leftarrow ((\RR\tau)\cap M)\big/(\ZZ\tau^{[0]}) \leftarrow 0.
\end{equation*}
On finite groups, $\Hom(-,S^1) = \Hom(-,\bC^*)$, so the definition of $D_\tau$ given here agrees with Definition \ref{def:Dtau}.
Here are two additional properties of the groups $G_\tau$:
\begin{enumerate}
\item $A_\tau$ is a compact torus of dimension $n - \dim(\tau)$.
\item When $\tau' \subset \tau$, there is a reverse containment $G_\tau \subset G_{\tau'}$.
\end{enumerate}

\begin{remark}
The fiber of $S_{\sympol,\triang} \to \PS'$ above $x$ is connected if any only if $D_{\tau_x}$ is trivial, so if and only if the simplex $\conv(\{0\} \cup \tau_x)$ is unimodular.  A triangulation whose simplices are unimodular uses every lattice point of $\sympol$ as a vertex, but the converse is not true.  For instance, $\tau$ might contain a triangle of the form $\{(1,0,0),\,(0,1,0),\,(1,1,N)\}$ for $N > 1$.
\end{remark}

\begin{remark}
\label{rem:quotientofS}
Define an equivalence relation on $S_{\sympol,\cT}$ by setting $x \sim y$ if both of the following hold:
\begin{itemize}
\item $x$ and $y$ project to the same element of $\PS'$,
\item $x$ and $y$ are in the same connected component of the fiber of this projection.
\end{itemize}
If $\cT$ is unimodular, then the quotient $S_{\sympol,\cT}/\!\!\sim$ is just $\PS'$.
In general $\partial\sympol'$ is some branched
cover of $\PS',$ with stratum $\tau^\circ$ having
covering group $D_\tau$.
We may write it as a regular cell complex which we denote by $\widehat{\PS'}$, i.e. 
\[
\widehat{\PS'} := S_{\sympol,\cT}/\!\!\sim \;\cong \bigcup_{\tau \in \cT} \tau^{\circ} \times D_\tau.
\]
We investigate this in more detail in the Section \ref{sec:regularcell}.
\end{remark}

\begin{example}
 \label{ex:c*c*skeleton}
 Picking up from Example \ref{ex:hypersurface}, we consider $Z = V(f)$ in $\bC^*\times \bC^*$ and compute its skeleton.
 We write $\phi \in \Hom(\bZ^2,\bR/\bZ)$ as $\phi = (\alpha,\beta),$ where $\phi(u,v) = \alpha u + \beta v \mod \bZ.$ 
 The vertex $\{b\} = \{(2,0)\} \in \cT$ has $G_{\{b\}} = \{(\alpha,\beta) \mid 2\alpha \equiv 0\} \cong \bZ/2 \times \bR/\bZ$ ---
 namely $\alpha$ is $0$ or $1/2$ and $\beta$ is free ---
 which is homeomorphic to two disjoint circles.  Similarly, $G_{\{e\}}$ is two disjoint circles:  $\alpha$ is free and $\beta$ is $0$ or $1/2.$
 $G_{\overline{bd}} = G_{\overline{de}} \cong \bZ/2$ is two points:  $(\alpha,\beta) = (0,0)$ or $(1/2,1/2).$
 $G_{\{d\}}$ is a single circle, $\beta = -\alpha$, since $d$ is primitive.  Up to homotopy, the fibers over the edges
 serve to attach the circles over $b$ and $e$ to $G_{\{d\}}$, meaning $S_{\sympol,\cT}$ is homotopic to a bouquet of five circles. A schematic representation of $S_{\sympol,\cT}$ is given below
 \begin{figure}[H]
 \label{fig:gorgeous}
\includegraphics[height=1.7in]{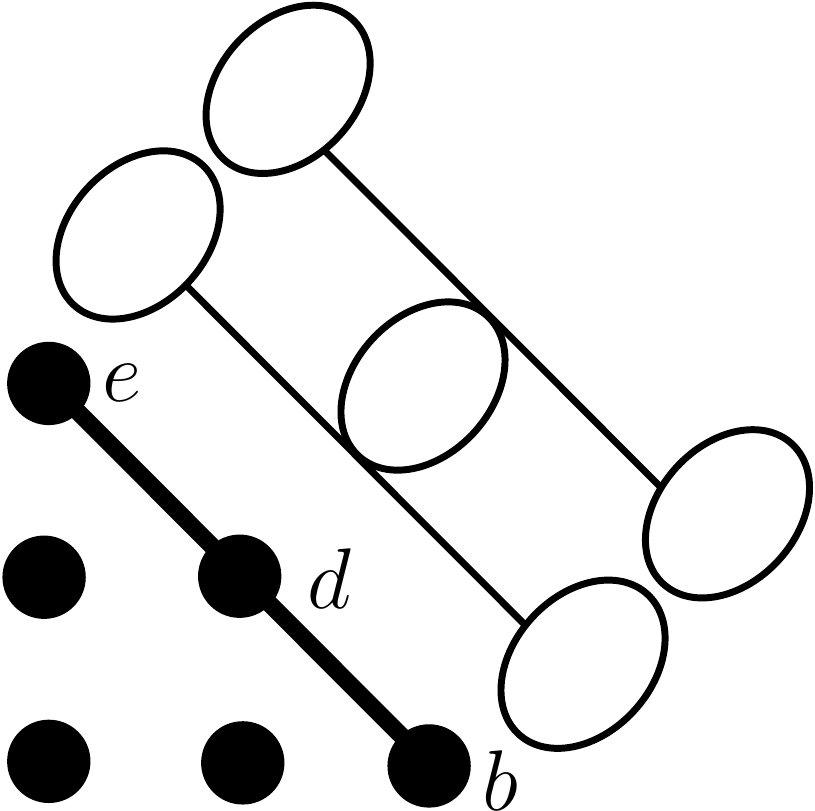}
\end{figure}
\noindent
 We shall see that $Z$ is homotopy equivalent to $S_{\sympol,\cT}$
 after investigating the skeleton of $\overline{V}(f)$ in Example \ref{ex:ccskeleton}
 in Section \ref{sec:five}.
\end{example}

\begin{remark}
The vertices of the triangulation $\cT$
generate the rays of a (stacky) fan $\cPxfan \subset M_\bR$.  It is shown in \cite{FLTZ,FLTZ2} that coherent sheaves on the toric Deligne-Mumford stack associated with $\cPxfan$ can be regarded as constructible sheaves on a compact torus with singular support in a conic Lagrangian $\Lambda_{\cPxfan} \subset N_\bR/N \times M_\bR \cong T^* (N_\bR/N)$.  This ``coherent-constructible correspondence'' is a full embedding of triangulated categories --- conjecturally an equivalence.
The conic Lagrangian $\Lambda_{\cPxfan}$ is noncompact.  Its Legendrian ``boundary'' $\Lambda_{\Sigma^\vee}^\infty$
at contact infinity of $T^* (N_\bR/N)$ is  homeomorphic to $S_{\sympol,\cT}$ --- see also
Section \ref{relatedwork}.
\end{remark}

\subsection{$\widehat {\PS'}$ as a regular cell complex}
\label{sec:regularcell}

Let us describe the combinatorics of $\widehat {\PS'}$ in some more detail.

\begin{definition}
\label{def:That}
For each $\tau \in \cT$ let $D_\tau$ be the finite commutative group given in Definition \ref{def:Dtau}.  We define the partially ordered set $\widehat{\cT}$ as follows.
\begin{enumerate}
\item If $\tau,\tau' \in \cT$ have $\tau \subset \tau'$, define a homomorphism $\mathrm{res}_{\tau',\tau}: D_{\tau'} \to D_\tau$ by the following formula.  If $d:\bR \tau' \cap M \to S^1$ is an element of $D_{\tau'}$, then $\mathrm{res}_{\tau',\tau}(d):\bR \tau \cap M \to S^1$ is given by
\[
\mathrm{res}_{\tau',\tau}(d)(m) = d(m)
\]
\item Let $\widehat{\cT}$ denote the set of pairs $(\tau,d)$ where $\tau \in \cT$ and $d \in D_\tau$.  We regard $\widehat{\cT}$ as a partially ordered set with partial order given by
\[
(\tau,d) \leq (\tau',d') \text{ whenever $\tau \subset \tau'$ and $\mathrm{res}_{\tau',\tau}(d') = d$} 
\]
\end{enumerate}
\end{definition}

Each $(\tau,d) \in \widehat{\cT}$ determines a map
\[
i_{\tau,d}:\tau \to \widehat{\PS'}
\]
by the formula
\[
i_{\tau,d}(m) = \{m\} \times d
\]

\begin{proposition}
For each $\tau \in \cT$ and $d \in D_\tau$, and let $i_{\tau,d}$ be the map defined above.  The following hold:
\begin{enumerate}
\item For each $\tau \in \cT$ and $d \in D_\tau$, the map $i_{\tau,d}$ is a homeomorphism of $\tau$ onto its image $i_{\tau,d}(\tau) \subset \WPSp$. 
\item For any face $\tau' \subset \tau$, the restriction of $i_{\tau,d}$ to $\tau'$ coincides with $i_{\tau',d'}$ for some $d' \in D_{\tau'}$.
\end{enumerate}
In other words, $\WPSp$ is a regular cell complex whose partially ordered set of cells is naturally isomorphic to $\widehat{\cT}$.
\end{proposition}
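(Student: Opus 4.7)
My plan is to unwind the definition of $i_{\tau,d}$ by lifting $d$ to $G_\tau$, then use the projection $\widehat{\PS'}\to\PS'$ to produce continuous inverses and verify the gluing compatibilities purely combinatorially.

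First I would fix, for each pair $(\tau,d)\in\widehat{\cT}$, a lift $\phi\in G_\tau$ of $d$ under $G_\tau\sra D_\tau$. For any $m\in\tau$ we have $\tau_m\subset\tau$, hence $G_\tau\subset G_{\tau_m}$ by property (2) of $G_\tau$ noted after Definition \ref{def:Gtau}, so $(m,\phi)\in S_{\sympol,\cT}$. The constant-in-$\phi$ assignment $m\mapsto(m,\phi)$ is a continuous map $\tau\to S_{\sympol,\cT}$, and post-composing with the quotient map $S_{\sympol,\cT}\to\widehat{\PS'}$ gives a continuous map that I claim equals $i_{\tau,d}$. Independence of the lift $\phi$ follows because any two lifts differ by an element of $A_\tau\subset A_{\tau_m}$, and $A_{\tau_m}$ is exactly the identity component of the fiber that is collapsed under $\sim$. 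Concretely, $i_{\tau,d}(m)$ lives in the cell $\tau_m^\circ\times D_{\tau_m}$ and its $D_{\tau_m}$-coordinate is $\mathrm{res}_{\tau,\tau_m}(d)$.

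Next I would prove (1). The projection $\PS'\times\Hom(M,S^1)\to\PS'$ descends, after the equivalence relation, to a continuous map $\widehat{\PS'}\to\PS'$, and the composition with $i_{\tau,d}$ is the inclusion $\tau\hra\PS'$, which is a homeomorphism onto its image. Hence $i_{\tau,d}$ is a continuous injection with a continuous left inverse defined on its image, so it is a homeomorphism onto its image (this also sidesteps any need to verify Hausdorffness of $\widehat{\PS'}$ directly, although it follows a posteriori). For (2), given a face $\tau'\subset\tau$ and a lift $\phi\in G_\tau$ of $d$, the same $\phi$ lies in $G_{\tau'}$ and represents the class $d':=\mathrm{res}_{\tau,\tau'}(d)\in D_{\tau'}$; evaluating both $i_{\tau,d}$ and $i_{\tau',d'}$ at $m\in\tau'$ produces the equivalence class of $(m,\phi)$, so they agree. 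The compatibility for sub-faces $\tau''\subset\tau'\subset\tau$ is immediate from the evident equality $\mathrm{res}_{\tau',\tau''}\circ\mathrm{res}_{\tau,\tau'}=\mathrm{res}_{\tau,\tau''}$.

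Finally, to conclude the regular cell complex assertion, I would observe that the images $i_{\tau,d}(\tau^\circ)$ for $(\tau,d)\in\widehat{\cT}$ partition $\widehat{\PS'}$ (this is the set-theoretic description $\bigcup_{\tau\in\cT}\tau^\circ\times D_\tau$ from Remark \ref{rem:quotientofS}), each is an open cell homeomorphic to $\tau^\circ$, and by (1) the closed cell $i_{\tau,d}(\tau)$ is the image of the closed simplex under a homeomorphism, so its boundary is glued via a homeomorphism to its topological frontier, which by (2) is the union of the lower-dimensional cells indexed by $(\tau',\mathrm{res}_{\tau,\tau'}(d))$ for proper faces $\tau'\subsetneq\tau$. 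This is the defining property of a regular CW structure, and the face relation among the closed cells exactly matches the partial order in Definition \ref{def:That}, giving the isomorphism of face posets. The main bookkeeping step — and the only place where real care is needed — is the verification that the restriction maps $\mathrm{res}_{\tau,\tau'}$ intertwine correctly with the $\sim$-relation, but this reduces immediately to the containment $G_\tau\subset G_{\tau'}$.
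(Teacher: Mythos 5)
Your proof is correct and follows the same essential strategy as the paper — exploit the projection $\WPSp \to \PSp$, whose post-composition with $i_{\tau,d}$ is the inclusion $\tau \hookrightarrow \PSp$. The paper proves (1) by noting that $\tau \to i_{\tau,d}(\tau)$ is a continuous bijection and then invoking the compact-to-Hausdorff principle ($\tau$ compact, $\WPSp$ Hausdorff), whereas you instead observe that the restriction of the projection provides an explicit continuous left inverse, so $i_{\tau,d}$ is a homeomorphism onto its image directly. Your variant is slightly preferable because the paper never actually verifies Hausdorffness of the quotient $\WPSp$, so your argument closes a small (if harmless) gap; it also forces you to spell out what the formula $i_{\tau,d}(m) = \{m\} \times d$ really means when $m$ lies on a proper face of $\tau$, namely that the $D_{\tau_m}$-label is $\mathrm{res}_{\tau,\tau_m}(d)$ rather than $d$ itself, a point the paper leaves implicit. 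For (2) and the regular-cell-complex conclusion you and the paper do the same thing — take $d' = \mathrm{res}_{\tau,\tau'}(d)$ and read off the poset isomorphism — though you supply more of the bookkeeping than the paper bothers to write down.
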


\begin{proof}
Note that the composite $\tau \to \WPSp \to \psprime$ is the usual inclusion of $\tau$ into $\psprime$---in particular $\tau \to i_{\tau,d}(\tau)$ is a continuous bijection.  Since $\tau$ is compact and $\WPSp$ is Hausdorff, this proves (1).  For (2), simply put $d' = \mathrm{res}_{\tau,\tau'}(d)$.
\end{proof}

\begin{remark}
In fact the Proposition shows that $\WPSp$ is a ``$\Delta$-complex'' in the sense of \cite[2.1]{Hatcher}, or a ``generalized simplicial complex'' in the sense of \cite[Definition 2.41]{Kozlov}.
\end{remark}

\begin{remark}
\label{rem:mapoutofcell}

We will use the following device for constructing continuous maps out of $\WPSp$ or $X_0$:
\begin{enumerate}
\item Let $K$ be a regular cell complex, let $\{\kappa\}$ be the poset of cells, and let $L$ be a topological space.  If $\{j_{\kappa}:\kappa \to L\}$ is a system of continuous maps such that $j_{\kappa}\vert_{\kappa'} = j_{\kappa'}$ whenever $\kappa' \subset \kappa$, then there is a unique continuous map $j:K \to L$ with $j\vert_{\kappa} = j_{\kappa}$ for all $\kappa$.  
\item Let $L$ be a topological space.  If $\{j_{\tau}:X_{0,\tau} \to L\}_{\tau \in \cT}$ is a system of continuous maps such that $j_{\tau} \vert_{\tau'} = j_{\tau'}$ whenever $\tau' \subset \tau$, then there is a unique continuous map $j:X_0 \to L$ with $j\vert_{\tau} = j_\tau$ for all $\tau$.
\end{enumerate}
In other words, $K$ is a colimit of its cells and $X_0$ is a colimit of the components $X_{0,\tau}$. \end{remark}

\begin{remark}
\label{rem:section}
For each $\tau \in \cT$, $i_{\tau,1}$ be the embedding $\tau \hookrightarrow \WPSp$ where the ``1'' in the subscript indicates the identity element of $D_\tau$.  These assemble to an inclusion $\psprime \hookrightarrow \WPSp$ by Remark \ref{rem:mapoutofcell}.
\end{remark}

\subsubsection{The homotopy type of $\WPSp$}

It is easy to identify the homotopy type of $\WPSp$, using the technique of ``shelling.''

\begin{theorem}
The regular cell complex $\WPSp$ has the homotopy type of a wedge of $n$-dimensional spheres.
\end{theorem}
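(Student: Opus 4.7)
My strategy is to realize $\WPSp$ as a pure $n$-dimensional shellable regular CW complex; the classical theorem that pure shellable $n$-complexes are wedges of $n$-spheres then completes the proof.

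First, since $0\in\sympol$, the support $\PS'$ of $\cT$ is either the simplicial $n$-sphere $\PS$ (when $0\in\sympol^\circ$) or a PL $n$-ball (when $0\in\partial\sympol$). In both cases $\cT$ is pure of dimension $n$, and so is $\WPSp$, whose top cells are $(\tau,d)$ for $n$-simplices $\tau\in\cT$ and $d\in D_\tau$. Regularity of $\cT_\sympol$ then yields a line-shelling $\tau_1,\ldots,\tau_s$ of the $n$-simplices of $\cT$: for $0\in\sympol^\circ$ this is a special case of Bruggesser--Mani for simplicial polytope boundaries, while for $0\in\partial\sympol$ a generic linear functional on $M_\bR$ sweeps $\cT$ to produce a shellable ball.

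I then lift this order to a shelling of $\WPSp$ by listing its top cells as
\[
(\tau_1,d_{1,1}),\ \ldots,\ (\tau_1,d_{1,|D_{\tau_1}|}),\ (\tau_2,d_{2,1}),\ \ldots,
\]
where each $\{d_{i,k}\}_k$ enumerates $D_{\tau_i}$ in an order refining the partial order induced by the restriction maps $D_{\tau_i}\to D_\sigma$ to codim-$1$ faces $\sigma\subset\tau_i$. At each stage, the intersection of the new cell $(\tau_i,d)$ with the union of previously added cells decomposes as a union of codim-$1$ faces $(\sigma,d|_\sigma)$ of $\partial(\tau_i,d)$ of two kinds: the \emph{vertical} contributions coming from $\sigma\subset\tau_i\cap(\tau_1\cup\cdots\cup\tau_{i-1})$, lifted via $d$, and the \emph{horizontal} contributions coming from earlier copies $(\tau_i,d')$ with $d|_\sigma=d'|_\sigma$. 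Both kinds are pure $(n-1)$-subcomplexes of $\partial(\tau_i,d)$, and their union remains pure of dimension $n-1$. Applying the classical theorem to the resulting shelling yields the wedge-of-$n$-spheres conclusion.

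The main obstacle is the last verification: when multiple copies of the same top simplex appear, one must choose the linear order on each $D_{\tau_i}$ compatibly with the vertical shelling of $\cT$, so that no lower-dimensional orphan faces slip into the attaching locus and violate purity. An attractive alternative, which sidesteps this bookkeeping, is to construct an acyclic discrete Morse matching on $\WPSp$ whose critical cells lie only in dimensions $0$ and $n$; then the same conclusion follows directly from Forman's theorem.
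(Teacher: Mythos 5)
You correctly identify the strategy used in the paper: exhibit a shelling of $\WPSp$ and invoke the Bj\"orner/Kozlov theorem that a pure shellable regular cell complex is a wedge of spheres. However, there are two genuine gaps, one of which you acknowledge but do not close.

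First, your claim that ``regularity of $\cT_\sympol$ yields a line-shelling of the $n$-simplices of $\cT$'' is not justified by your citation. Bruggesser--Mani line shelling applies to the boundary complex of a polytope (i.e., to its face lattice), not to an arbitrary lattice triangulation that refines it, and a generic linear sweep of a non-convex simplicial $n$-ball is likewise not automatically a shelling. Whether $\cT$ itself is shellable is exactly the point the paper avoids: it cites BM, Proposition 1 to obtain a shellable \emph{subdivision} $\cS$ of $\cT$ and lifts $\cS$ (rather than $\cT$) to $\WPSp$. If you want to shell $\cT$ directly you would need a different argument --- for instance, that $\cT_\sympol$ is shellable because it is a regular triangulation of a polytope, and that since $\cT_\sympol$ is star-shaped at $0$, restricting each shelling cell $\conv(\{0\}\cup\tau)$ to $\tau$ produces a shelling of $\cT$ --- but you do not make that argument.

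Second, and more seriously, the lifting step is where the real work lies, and you admit you cannot finish it. Your worry is well-founded: if all the lifts of a single top simplex are listed consecutively, the intersection of a new lift $(\tau_i,d)$ with the previously listed $(\tau_i,d')$ can fail to be pure $(n-1)$-dimensional (or can even be empty) when $d$ and $d'$ disagree on all facets of $\tau_i$, since the group elements of $D_{\tau_i}$ need not be distinguishable by their restrictions to facets. Refining ``the partial order induced by the restriction maps'' does not obviously prevent this, and you give no construction of an ordering that does. The paper's approach differs: after passing to $\cS$, it fixes an \emph{arbitrary} total order on the fiber over each top-dimensional $\sigma\in\cS$ and asserts that concatenating these orders along a shelling of $\cS$ yields a shelling of $\widehat{\cS}$; the hypothesis that $\WPSp\to\PSp$ is branched along the simplices of $\cS$ is the stated justification. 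You should engage with that argument directly --- try to verify the purity claim for the lifted subdivision --- rather than attempting an independent ordering scheme and then retreating to a vague discrete-Morse alternative, which as written is only a remark, not a proof.
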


\begin{proof}
We will show that $\WPSp$ is \emph{shellable} in the sense of \cite[Definition 12.1]{Kozlov}---then by \cite[Theorem 12.3]{Kozlov} $\WPSp$ is homotopy equivalent to a wedge of $n$-dimensional spheres.  By \cite[Proposition 1]{BM}, the triangulation $\cT$ of $\PSp$ has a shellable subdivision, denote it by $\cS$.  Let $\widehat{\cS}$ denote the lift of $\cS$ to $\WPSp$.  For each top-dimensional face $\sigma$ of $\cS$, fix a total order $F(\sigma,1),\ldots,F(\sigma,k)$.  Since $\WPSp \to \PSp$ is a branched covering along the simplices of $\cS$, whenever  $\sigma_1,\sigma_2,\ldots, \sigma_N$ is a shelling of $\cS$
 \[
F(\sigma_1,1),\ldots,F(\sigma_1,k_1),F(\sigma_2,1),F(\sigma_2,2),\ldots,F(\sigma_2,k_2),\ldots,F(\sigma_N,1),\ldots F(\sigma_N,k_N)
\]
is a shelling of $\widehat{\cS}$.
\end{proof}

\subsection{Embedding $\WPSp$ into $\X_0$}
\label{embeddingI}

In this section, using the positivity conditions on the coefficients $a_m$ of $f$
described below Equation (\ref{eq:f}), we will construct an embedding of $\WPSp$ into the special fiber $X_0$.

\subsubsection{General remarks on positive loci in toric varieties}
\label{subsubsec:generalremarks}
Let $T \cong (\bC^*)^n$ be an algebraic torus and fix a splitting $T \cong \mathrm{U}(1)^n \times \bR_{>0}^{n}$.  If $W$ is a toric variety acted on by $T$, and $1 \in W$ is a base point in the open orbit, then the \emph{positive locus} of $W$ is the $\bR_{>0}^n$-orbit of $1$ on $W$.  The \emph{nonnegative locus} is the closure of the positive locus in $W$.  We write $W_{>0}$ for the positive locus and $W_{\geq 0}$ for the nonnegative locus.  

\begin{example}
\label{ex:Wgeq0}
Let $W$ be an affine toric variety of the form $\Spec(\bC[M \cap \sigma])$.  Then under the identification $W \cong \Hom(M \cap \sigma,\bC)$ of Proposition \ref{prop:specpoints}, the nonnegative locus is
\begin{equation}
W_{\geq 0} \cong \Hom(M \cap \sigma,\bR_{\geq 0})
\end{equation}
\end{example}

When $W = \Proj(\bC[\bZ_{\geq 0}^{n+1}])$, the nonnegative locus is the set of points whose homogeneous coordinates can be chosen to be nonnegative real numbers.  It can be identified with a simplex.  The following Proposition investigates this example in more detail:

\begin{proposition}
\label{prop:moment}
Let $\tau \subset M_\bR$ be a lattice simplex, and let $\bP^{\dim(\tau)}$ be the projective space of Definition \ref{def:projYtoPtau}.  Let $[x_m]_{m \in \tau^{[0]}}$ be homogeneous coordinates for a point of $\bP^{\dim(\tau)}$.  Define the moment map $\mu_\tau: \bP^{\dim(\tau)}\rightarrow M_\bR$ by
\[
\mu_\tau([x_m]_{m \in \tau^{[0]}}) = 
\frac{\sum_{m \in \tau^{[0]}} |x_m|^2 m}{\sum_{m \in \tau^{[0]}} |x_m|^2}
\]
Then $\mu_\tau$ is a homeomorphism of $\bP^{\dim(\tau)}_{\geq 0}$ onto $\tau$.
\end{proposition}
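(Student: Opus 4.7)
The plan is to factor the moment map through the standard simplex and recognize the resulting map as the barycentric parameterization of~$\tau$.

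First, I would give an explicit description of $\bP^{\dim(\tau)}_{\geq 0}$. Following Section~\ref{subsubsec:generalremarks}, a point lies in $\bP^{\dim(\tau)}_{\geq 0}$ precisely when it admits a homogeneous representative $(x_m)_{m \in \tau^{[0]}}$ with every $x_m \geq 0$ and not all $x_m$ zero. Since rescaling a nonzero nonnegative real tuple by a non-positive-real complex number generically destroys nonnegativity (and when the tuple is supported at a single coordinate the ambiguity is harmlessly absorbed by the phase), the nonnegative representative is unique up to multiplication by an element of $\bR_{>0}$. In particular, demanding $\sum_m |x_m|^2 = 1$ pins down a unique representative, yielding a continuous bijection
\[
\bP^{\dim(\tau)}_{\geq 0} \;\xrightarrow{\;\sim\;}\; \Delta \;:=\; \bigl\{(t_m)_{m \in \tau^{[0]}} \in \bR_{\geq 0}^{\tau^{[0]}} \;:\; \textstyle\sum_m t_m = 1\bigr\},
\qquad [x_m] \longmapsto \bigl(|x_m|^2/\textstyle\sum_{m'} |x_{m'}|^2\bigr).
\]
Both spaces are compact Hausdorff, so this is a homeomorphism.

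Second, substituting this description into the defining formula of $\mu_\tau$ turns the moment map into the affine map
\[
\Delta \longrightarrow M_\bR, \qquad (t_m)_{m \in \tau^{[0]}} \longmapsto \sum_{m \in \tau^{[0]}} t_m\, m,
\]
which is precisely the barycentric-coordinate parameterization of the convex hull of $\tau^{[0]}$. Because $\tau$ is a simplex, its vertices $\tau^{[0]}$ are affinely independent in $M_\bR$, so this barycentric map is a continuous bijection onto $\tau = \conv(\tau^{[0]})$. As $\Delta$ is compact and $\tau$ is Hausdorff, it is a homeomorphism. Composing the two homeomorphisms gives the desired identification $\mu_\tau : \bP^{\dim(\tau)}_{\geq 0} \xrightarrow{\sim} \tau$.

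The only place that requires care is the first step, namely verifying that $\bP^{\dim(\tau)}_{\geq 0}$ really is homeomorphic to the standard simplex $\Delta$ via the normalization by $\sum|x_m|^2$. This amounts to checking uniqueness of nonnegative representatives up to positive real rescaling, with attention to the boundary strata where only some $x_m$ are nonzero; everything else reduces to the standard fact that the barycentric map parameterizes a simplex by its standard model.
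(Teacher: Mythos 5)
Your proof is correct. The paper does not actually supply an argument for this proposition — it simply cites Fulton's book (\cite[\S4.2]{Fu}), where the general statement about moment maps of toric varieties is established. Your proof is a clean, self-contained version of the relevant special case: you factor $\mu_\tau$ through the homeomorphism
\[
\bP^{\dim(\tau)}_{\geq 0} \;\xrightarrow{\;\sim\;}\; \Delta,
\qquad [x_m] \mapsto \Bigl( |x_m|^2 / \textstyle\sum_{m'} |x_{m'}|^2 \Bigr),
\]
and then observe that the remaining map $\Delta \to M_\bR$, $(t_m) \mapsto \sum t_m m$, is the barycentric parameterization of $\tau$, a bijection onto $\tau$ precisely because $\tau$ is a simplex (so its vertices are affinely independent), with the homeomorphism property following from compactness of the source and the Hausdorff property of the target. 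All of these steps are correct; the only slightly awkward passage is the remark about the scalar ambiguity ``harmlessly absorbed by the phase'' for a tuple supported at a single coordinate, but the conclusion you draw from it — uniqueness of the nonnegative representative up to $\bR_{>0}$-scaling — is right, and the normalization $\sum|x_m|^2 = 1$ (or equivalently dividing by the sum) makes the map well defined and bijective. Since the paper gives no proof, your direct argument is, if anything, more informative, while Fulton's reference is more general (it applies to arbitrary projective toric varieties, not just $\bP^n$).
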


\begin{proof}
See \cite[\S4.2]{Fu}
\end{proof}

\begin{remark}
The map of Proposition \ref{prop:moment} is the usual moment map for a Hamiltonian torus action and symplectic form on $\bP^{\dim(\tau)}$, but the conclusion of the Proposition holds for any map of the form
\[
\frac{\sum_{m \in \tau^{[0]}} |x_m|^e m}{\sum_{m \in \tau^{[0]}} |x_m|^e}
\]
so long as $e$ is real and $e> 0$.  When $e > 1$, these maps are smooth.  The case $e = 1$ may lead to a simpler formula for the map considered in Definition \ref{def:lambda}
\end{remark}

\begin{remark}
\label{rem:moment-compatible}
The moment maps of Proposition \ref{prop:moment} have the following compatibility feature: if $\tau' \subset \tau$ is a face of $\tau$, then the restriction of $\mu_{\tau}$ to $\bP^{\dim(\tau')}
\subset \bP^{\dim(\tau)}$ is $\mu_{\tau'}$.  In particular by Remark \ref{rem:mapoutofcell}, there is a well defined map 
\[
\nu:X_0 \to \PS' \subset M_\bR,
\]
such that, for all $\tau$, its restriction to $X_{0, \tau}$ is given by $\nu_{\tau}:=\mu_{\tau} \circ p_{\tau}$.
\end{remark}

\subsubsection{Embedding}

Recall the $D_\tau$-equivariant maps
\[
p_\tau:X_{0,\tau} \to \bP^{\dim(\tau)} \setminus V(\ell_\tau)
\]
of Proposition \ref{prop:degenaffine}.  We use it to define a \emph{nonnegative locus} in $X_{0,\tau}$.

\begin{definition}
\label{def:nonneglocus}
Fix $\tau \in \cT$.  Let $\bP^{\dim(\tau)}$ be the projective space of Definition \ref{def:projYtoPtau}, let $X_{0,\tau}$ be the affine variety of Definition \ref{def:ftau}.  We define subsets 
\[
\begin{array}{ccccc}
\bP^{\dim(\tau)}_{>0} & \subset & \bP^{\dim(\tau)}_{\geq 0} & \subset & \bP^{\dim(\tau)} \\
 (X_{0,\tau})_{>0}&  \subset & (X_{0,\tau})_{\geq 0}&  \subset &  X_{0,\tau}
 \end{array}
\]
as follows:
\begin{enumerate}
\item Let $\bP^{\dim(\tau)}_{>0} \subset \bP^{\dim(\tau)}$ be the set of points whose homogeneous coordinates can be chosen to be positive real numbers.  We call $\bP^{\dim(\tau)}_{>0}$ the \emph{positive locus} of $\bP^{\dim(\tau)}$.  
\item Let $\bP^{\dim(\tau)}_{\geq 0} \subset \bP^{\dim(\tau)}$ be the closure of $\bP^{\dim(\tau)}_{>0}$, i.e. the set of points whose homogeneous coordinates can be chosen to be nonnegative real numbers.  We call $\bP^{\dim(\tau)}_{\geq 0}$ the \emph{nonnegative locus} of $\bP^{\dim(\tau)}$.
\item If $(Y_{0,\tau})_{\geq 0}$ is as defined in Example \ref{ex:Wgeq0}, let $(X_{0,\tau})_{\geq 0} = X_{0,\tau} \cap (Y_{0,\tau})_{\geq 0}$, 
\end{enumerate}
\end{definition}

\begin{proposition}
\label{propembeddingIa}
Let $\bP^{\dim(\tau)}_{\geq 0}$ be as in Definition \ref{def:nonneglocus} and let $V(\ell_\tau)$ be as in Definition \ref{def:ftau}.  The following hold
\begin{enumerate}
\item $V(\ell_\tau)$ does not meet $\bP^{\dim(\tau)}_{\geq 0}$, i.e.
\[
V(\ell_\tau) \cap \bP^{\dim(\tau)}_{\geq 0} = \varnothing
\]
\item The projection of $X_{0,\tau}$ onto $\bP^{\dim(\tau)} \setminus V(\ell_\tau)$ induces a homeomorphism of nonnegative loci
\[
(X_{0,\tau})_{\geq 0} \stackrel{\sim}{\to} \bP^{\dim(\tau)}_{\geq 0}
\]
\end{enumerate}
\end{proposition}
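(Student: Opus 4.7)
Part (1) reduces to a positivity check: any point of $\bP^{\dim(\tau)}_{\geq 0}$ has a representative $(x_m)_{m \in \tau^{[0]}}$ with $x_m \geq 0$ and not all zero, so
\[
\ell_\tau(x) = \sum_{m \in \tau^{[0]}} a_m x_m > 0
\]
since every $a_m > 0$ by the positivity assumption on the coefficients of $f$. Hence $[x_m] \notin V(\ell_\tau)$.

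For Part (2) I would use Proposition \ref{prop:specpoints} to identify the nonnegative loci with sets of monoid homomorphisms into $\bR_{\geq 0}$: a point of $(X_{0,\tau})_{\geq 0}$ is a monoid homomorphism $\phi:M \cap \bR_{\geq 0}\tau \to \bR_{\geq 0}$ satisfying $f_\tau = 0$, i.e.\ $\sum_{m \in \tau^{[0]}} a_m \phi(m) = -a_0 > 0$. Under $p_\tau$ this sends $\phi$ to the projective point $[\phi(m)]_{m\in \tau^{[0]}}$. The strategy is then to prove (a) continuity is automatic since $p_\tau$ is algebraic, (b) the restriction is a bijection, and (c) the source is compact, so that a continuous bijection onto a Hausdorff target is a homeomorphism.

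For (b), I would construct the inverse explicitly. Given $[y_m]_{m \in \tau^{[0]}} \in \bP^{\dim(\tau)}_{\geq 0}$, use the positivity of $\sum a_m y_m$ established in Part (1) to rescale to the unique representative with $\sum a_m y_m = -a_0$. Since $\tau^{[0]}$ is a basis for $\bR\tau$ over $\bR$, every $m \in M \cap \bR_{\geq 0}\tau$ can be written uniquely as $\sum q_{i,m} v_i$ with $q_{i,m} \in \bQ_{\geq 0}$; define
\[
\phi(m) = \prod_i y_{v_i}^{q_{i,m}}
\]
using positive real roots (with the convention $0^0 = 1$ and $0^{q} = 0$ for $q > 0$). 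One checks $\phi$ is a monoid homomorphism $M \cap \bR_{\geq 0}\tau \to \bR_{\geq 0}$ because $q_{i,m+m'} = q_{i,m}+q_{i,m'}$, and by construction $\phi$ satisfies $f_\tau = 0$. Conversely, any $\phi \in (X_{0,\tau})_{\geq 0}$ with the same image $[\phi(m)]_{m \in \tau^{[0]}}$ and the same normalization agrees with the constructed $\phi$ on vertices, and the nonnegative-root relation $\phi(m)^N = \prod \phi(v_i)^{Nq_{i,m}}$ (for $N$ clearing denominators) shows $\phi$ is determined on all of $M \cap \bR_{\geq 0}\tau$. This gives bijectivity; continuity of the inverse follows from continuity of positive real roots on $[0,\infty)$ together with continuity of the rescaling.

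For (c), the relation $\sum_{m \in \tau^{[0]}} a_m \phi(m) = -a_0$ with $a_m > 0$ bounds each vertex coordinate $\phi(v_i) \leq |a_0|/a_{v_i}$. Since $M \cap \bR_{\geq 0}\tau$ is finitely generated as a monoid and each generator is a $\bQ_{\geq 0}$-combination of vertices, every coordinate $\phi(m)$ is bounded above by a power product of the vertex bounds; hence $(X_{0,\tau})_{\geq 0}$ is closed and bounded inside a finite-dimensional affine embedding of $X_{0,\tau}$, so compact. The main subtlety is the construction of the nonnegative lift when some vertex coordinates vanish: one must verify that the formula $\phi(m) = \prod y_{v_i}^{q_{i,m}}$ yields a well-defined monoid homomorphism into $\bR_{\geq 0}$, and that this is the unique nonnegative preimage inside the $D_\tau$-orbit, since multiplication by a nontrivial character of $D_\tau$ destroys nonnegativity unless the coordinate already vanishes.
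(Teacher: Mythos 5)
Your proof is correct and follows essentially the same route as the paper's. The shared key fact is that a monoid homomorphism $M \cap \bR_{\geq 0}\tau \to \bR_{\geq 0}$ is freely and uniquely determined by its values on the vertices $\tau^{[0]}$ (because $\bR_{\geq 0}$ is divisible and $\tau^{[0]}$ is an $\bR$-basis for $\bR\tau$), which the paper packages as a homeomorphism $(Y_{0,\tau})_{\geq 0} \cong \bR_{\geq 0}^{\dim\tau+1}$ and you package as the explicit inverse formula $\phi(m) = \prod_i y_{v_i}^{q_{i,m}}$. The paper then finishes by observing that $f_\tau = 0$ cuts out a simplex in $\bR_{\geq 0}^{\dim\tau+1}$ with one vertex on each coordinate ray, so its projectivization is tautologically $\bP^{\dim\tau}_{\geq 0}$; you instead invoke compactness of the source plus Hausdorffness of the target (and, redundantly, also argue continuity of the inverse directly). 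Both are fine; the paper's simplex observation is a little cleaner and avoids the compactness step. Your closing remark about uniqueness within a $D_\tau$-orbit is true but not needed for this proposition; that point matters in Proposition~\ref{propembeddingIb}, not here.
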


\begin{proof}
Suppose $[x_m]_{m \in \tau^{[0]}}$ are homogeneous coordinates for a point $P \in \bP^{\dim(\tau)}$.  If $P$ belongs to the nonnegative locus, then by definition we may choose the $x_m$ to be real and nonnegative.  Moreover, at least one of the $x_m$ must be nonzero, say $x_{m_0}$.  Then evaluating $\ell_\tau$ on $P$ gives
\[
\ell_\tau(P) = \sum_{m \in \tau^{[0]}} a_m x_m \geq a_{m_0} x_{m_0} > 0
\]
since all the $a_m$ are positive real numbers.  In particular $\ell_\tau(P) \neq 0$.  This proves (1).

Let us prove (2).  Let $v_0,\ldots,v_{\dim(\tau)}$ be the vertices of $\tau$.  A point of $(Y_{0,\tau})_{\geq 0}$ is given by a monoid homomorphism $x:M \cap \bR_{\geq 0} \tau \to \bR_{\geq 0}$.  Since $\bR_{\geq 0}$ is divisible and $\tau^{[0]}$ is a basis for $\bR \tau$, $x$ is determined by its values on $\tau^{[0]}$, and the map
\[
x \mapsto (x(v_0),\ldots,x(v_{\dim(\tau)}))
\] 
is a homeomorphism of $(Y_{0,\tau})_{\geq 0}$ onto $\bR_{\geq 0}^{\dim(\tau)+1}$.  In these coordinates, the equation $f_{\tau} = 0$ defining $(X_{0,\tau})_{\geq 0}$ is
\[
\sum_{i = 0}^{\dim(\tau)} a_{v_i} x(v_i) = -a_0
\]
which (since $a_0 < 0$ and $a_{v_i} > 0$) is a simplex with a vertex on each coordinate ray of $\bR_{\geq 0}^{\dim(\tau)+1}$.  It follows that the projection onto $(\bR_{\geq 0}^{\dim(\tau)+1} \setminus \{0\})/\bR_{>0} \cong \bP^{\dim(\tau)}_{\geq 0}$ is a homeomorphism.
\end{proof}

To define an embedding $\WPSp \to X_0$, we may appeal to Remark \ref{rem:mapoutofcell} and define map it simplex by simplex.

\begin{definition}
\label{def:jtaud}
Let $\widehat{\cT}$ be the poset of Definition \ref{def:That}.  For each $(\tau,d) \in \widehat{\cT}$ define the map $j_{\tau,d}$ to be the composite
\[
\xymatrix{
\tau \ar[r]^{\mu^{-1}} & \bP^{\dim(\tau)}_{\geq 0} \ar[r]^{p_{\tau}^{-1}} & (X_{0,\tau})_{\geq 0} \ar[r]^{d} & (X_{0,\tau})_{\geq 0}
}
\]
where 
\begin{itemize}
\item $\mu^{-1}$ is the inverse homeomorphism to the map of Proposition \ref{prop:moment}
\item $p_\tau^{-1}$ is the inverse homeomorphism to the map of Proposition \ref{propembeddingIa}(2).
\item $d$ denotes the action of $d \in D_\tau$ on $X_{0,\tau}$ of Definition \ref{def:Dtau}.
\end{itemize}
\end{definition}

\begin{proposition}
\label{propembeddingIb}
Let $\WPSp$ be as in Remark \ref{rem:quotientofS}, let $\widehat{\cT}$ be as in Definition \ref{def:That}, and for each $(\tau,d) \in \widehat{\cT}$ let $i_{\tau,d}:\tau \hookrightarrow \WPSp$ be the inclusion defined in Section \ref{sec:regularcell} and let $j_{\tau,d}$ be the inclusion of \ref{def:jtaud}.  There is a unique map $j:\WPSp \hookrightarrow X_{0}$ such that  for all $(\tau,d) \in \widehat{\cT}$, the square
\[
\xymatrix{
\tau \ar@{^{(}->}[d]_{i_{\tau,d}} \ar[r]^{j_{\tau,d}} & X_{0,\tau} \ar[d] \\ 
\WPSp \ar[r]_{j} & X_0
}
\]

commutes.
\end{proposition}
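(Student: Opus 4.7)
The plan is to invoke the colimit universal property of Remark \ref{rem:mapoutofcell}(1): since $\WPSp$ is the regular cell complex whose cells are the images of the maps $i_{\tau,d}:\tau\to\WPSp$, a continuous map $j:\WPSp\to X_0$ making all the claimed squares commute exists and is unique as soon as the family $\{j_{\tau,d}\}_{(\tau,d)\in\widehat{\cT}}$ is compatible along face inclusions. Thus the entire content of the proposition reduces to verifying the single identity
\[
j_{\tau,d}\big|_{\tau'}\;=\;j_{\tau',d'}\qquad\text{whenever }\tau'\subset\tau\text{ and }d'=\mathrm{res}_{\tau,\tau'}(d).
\]

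I would prove this by checking compatibility of each factor in the composition $j_{\tau,d}=d\circ p_\tau^{-1}\circ\mu^{-1}$ separately. The moment-map factor is handled directly by Remark \ref{rem:moment-compatible}: $\mu^{-1}$ restricted to $\tau'$ lands in $\bP^{\dim(\tau')}_{\geq 0}\subset\bP^{\dim(\tau)}_{\geq 0}$ and coincides with $\mu_{\tau'}^{-1}$ there. The group-action factor is compatible essentially by the very definition of $\mathrm{res}_{\tau,\tau'}$: on the subring $\bC[M\cap\bR_{\geq 0}\tau']$ coordinatizing $Y_{0,\tau'}\subset Y_{0,\tau}$, the action of $d\in D_\tau$ depends only on the values of $d$ on $M\cap\bR\tau'$, which is $\mathrm{res}_{\tau,\tau'}(d)=d'$.

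The only nontrivial step, and the one I expect to be the main obstacle, is the compatibility of the projections: I must show that $p_\tau^{-1}(\bP^{\dim(\tau')}_{\geq 0})\subset(X_{0,\tau'})_{\geq 0}$ and that the resulting map agrees with $p_{\tau'}^{-1}$. The inclusion rests on the following monomial identity. The stratum $Y_{0,\tau'}\hookrightarrow Y_{0,\tau}$ is cut out by the ideal of all $z^m$ with $m\in(M\cap\bR_{\geq 0}\tau)\setminus\bR_{\geq 0}\tau'$. If a point of $(Y_{0,\tau})_{\geq 0}$ has vertex coordinate $z^v=0$ for some $v\in\tau^{[0]}\setminus\tau'^{[0]}$, then for any such $m$, writing $m=\sum_{w\in\tau^{[0]}} c_w w$ with $c_w\in\bQ_{\geq 0}$ forces some $c_v>0$, and the identity $(z^m)^N=\prod_w(z^w)^{Nc_w}$ (for $N$ clearing denominators) forces $z^m=0$. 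Since $\mu^{-1}(\tau')$ is exactly the subspace of $\bP^{\dim(\tau)}_{\geq 0}$ on which those vertex coordinates vanish, its $p_\tau$-preimage sits in $(X_{0,\tau'})_{\geq 0}$, and Proposition \ref{propembeddingIa}(2) applied to $\tau'$ identifies the restriction of $p_\tau^{-1}$ with $p_{\tau'}^{-1}$. Finally, to justify the hook arrow in the statement: $\WPSp$ is a finite regular cell complex, hence compact Hausdorff, and $X_0$ is Hausdorff, so it suffices to check $j$ is injective; this follows from Proposition \ref{prop:degenaffine}(3) (which gives that $D_\tau$ acts freely on the positive locus $(X_{0,\tau})_{>0}$, so $j_{\tau,d}|_{\tau^\circ}$ is injective and distinct $d$'s produce disjoint images) together with the fact that distinct $\tau\in\cT$ correspond to distinct torus orbits in $Y_0$.
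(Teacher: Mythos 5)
Your proof is correct and follows the same route as the paper: reduce to the compatibility identity $j_{\tau,d}\vert_{\tau'} = j_{\tau',\mathrm{res}_{\tau,\tau'}(d)}$ via Remark \ref{rem:mapoutofcell}(1), then verify it factor by factor in the composition $d\circ p_\tau^{-1}\circ\mu^{-1}$. You supply more detail than the paper's three-line proof at the step where $p_\tau^{-1}$ and $p_{\tau'}^{-1}$ are matched on $\bP^{\dim(\tau')}_{\geq 0}$ (the paper merely refers back to the proof of Proposition \ref{propembeddingIa}), and you additionally verify that $j$ is injective to justify the hook arrow, a point the paper's proof leaves implicit.
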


\begin{proof}
By Remark \ref{rem:mapoutofcell}(1), it suffices to show that the maps $\tau \to X_0$ given by $j_{\tau,d}$ are compatible in the sense that $j_{\tau,d}\vert_{\tau'} = j_{\tau',\mathrm{res}_{\tau,\tau'}(d)}$ whenever $\tau' \subset \tau$. 
To see this, note that if $t'\in \tau'\subset \tau,$ then $\mu^{-1}$ carries
$t'$ to $\bP^{\dim\tau'}_{\geq 0}\subset \bP^{\dim\tau}_{\geq 0}$
(see Remark \ref{rem:moment-compatible}).
The proof of Proposition \ref{propembeddingIa} shows that $p_{\tau'}^{-1}$
and $p_{\tau}^{-1}$ agree on this locus.  Finally, the actions of $d$ and $\mathrm{res}_{\tau,\tau'}(d)$
are defined to agree on the result.

\end{proof}

\begin{remark}
\label{rem:deligne}
The inverse image above $\tau \subset \PSp$ of the map $\WPSp \to \PSp$ is a mild generalization (to Fermat hypersurfaces in weighted projective spaces) of the space considered in \cite[pp. 88--90]{Deligne}.
\end{remark}

\subsection{$\WPSp$ embeds in $X_0$ as a deformation retract.}
In this section we prove that the inclusion $\WPSp \hookrightarrow X_0$ is a deformation retract.  This is a ``degenerate'' case of our Main Theorem, and plays an important role in the proof.

\subsubsection{Lifting deformation retractions along branched covers}
\label{sec:branched}
Let us first discuss a path-lifting property of branched coverings:

\begin{definition}
\label{def:enter}
Let $W$ be a locally contractible, locally compact Hausdorff space and let $F_1 \subset F_2 \subset \cdots \subset F_k \subset W$ be a filtration by closed subsets.  
\begin{enumerate}
\item A map $p:W' \to W$ is \emph{branched along the filtration} $F$ if it is proper and if $p^{-1}(F_i \setminus F_{i-1}) \to F_i \setminus F_{i-1}$ is a covering space for every $i$.
\item A path $\gamma:[0,1] \to W$ is called an \emph{enter path for the filtration} $F$ if whenever $\gamma(t) \in F_i$, then $\gamma(s) \in F_i$ for all $s > t$.  (In other words once $\gamma$ enters the subset $F_i$, it does not leave).  Write $\Maps_F([0,1],W)$ for the space of enter paths for $F$, with the compact-open topology.

\item A deformation retraction $W \to \Maps([0,1],W)$ that factors through $\Maps_F([0,1],W)$ is called a \emph{$F$-deformation retraction}.
\end{enumerate}
\end{definition}

\begin{proposition}
\label{prop:branched-lifting}
Let $p:W' \to W$ be branched along a filtration $F$ of $W$.  Let $\gamma:[0,1] \to W$ be an enter path for $F$.  Then for each $w' \in p^{-1}(\gamma(0))$, there is a unique path $\tilde{\gamma}:[0,1] \to W'$ with $p \circ \tilde{\gamma} = \gamma$ and $\tilde{\gamma}(0) = w'$.  The path $\tilde{\gamma}$ is an enter path for $p^{-1}(F)$, and the map
\[
W' \times_{p,W,\mathrm{ev}_0} \Maps_F([0,1],W) \to \Maps_{p^{-1}(F)}([0,1],W')
\]
that sends $(w',\gamma)$ to the unique lift $\tilde{\gamma}$ is continuous.
\end{proposition}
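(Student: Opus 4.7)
The plan is to construct the lift $\tilde{\gamma}$ stratum-by-stratum via ordinary covering-space path lifting, extend it across the finitely many transition times where $\gamma$ enters a deeper stratum, and then establish continuity of the parametrized lifting via a compactness argument.

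First I would subdivide $[0,1]$. Since $\gamma$ is an enter path, each preimage $\gamma^{-1}(F_i)$ is closed and upward-closed in $[0,1]$, hence of the form $[t_i,1]$ (or empty). Ordering these finitely many times yields a partition $0 = s_0 < s_1 < \cdots < s_N = 1$ such that on each half-open piece $[s_j, s_{j+1})$ the path $\gamma$ lies entirely in a single locally closed stratum $G_j := F_{i_j} \setminus F_{i_j-1}$, with the indices $i_j$ nonincreasing in $j$.

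Inductively on $j$, I would define $\tilde{\gamma}$ on $[s_j, s_{j+1})$ as the unique continuous lift through $p \colon p^{-1}(G_j) \to G_j$, which is an honest covering space. The initial point is $w'$ for $j = 0$, and otherwise the value produced in the previous step. To extend across the transition at $s_{j+1}$, note that by properness of $p$ the nested sets $\overline{\tilde{\gamma}([s, s_{j+1}))}$ are compact, so their intersection $L$ is a nonempty subset of the discrete finite fiber $p^{-1}(\gamma(s_{j+1}))$. Choosing pairwise disjoint open neighborhoods of the points of $L$ and using the connectedness of $\tilde{\gamma}([s, s_{j+1}))$ for $s$ close to $s_{j+1}$ forces $L$ to be a singleton, which I would take as $\tilde{\gamma}(s_{j+1})$. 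This value lies in $p^{-1}(F_{i_{j+1}})$, so the resulting lift is an enter path for $p^{-1}(F)$. Uniqueness of $\tilde{\gamma}$ then follows stepwise, from standard uniqueness in covering-space path lifting together with the forced choice of limit at each $s_{j+1}$.

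The main obstacle will be the continuity of the map $(w', \gamma) \mapsto \tilde{\gamma}$. For this I would cover the compact image $\tilde{\gamma}([0,1])$ by finitely many open sets $V \subset W'$ on which $p|_V$ is a homeomorphism onto a ``filtration-compatible'' neighborhood of its image; a Lebesgue-number argument on a partition of $[0,1]$ adapted to this cover then forces any nearby lift $\tilde{\gamma}_n$ of $\gamma_n$ starting near $w'$ to remain piecewise inside the corresponding $V$'s, whence $\tilde{\gamma}_n \to \tilde{\gamma}$ uniformly. The delicate point is precisely at the transition times $s_j$: a perturbation $\gamma_n$ may enter the deeper stratum $F_{i_{j+1}}$ at a slightly different time, and I would handle this by shrinking the neighborhood of $\tilde{\gamma}(s_j)$ enough that properness bounds the number of sheets of $p$ meeting it and the uniqueness above rules out crossing to a different sheet.
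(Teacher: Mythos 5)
The paper itself does not prove this proposition: it defers to \cite[Proposition 4.2]{Woolf} and \cite{Fox}. So you are attempting something the authors chose to outsource, which is worthwhile, and the comparison below is against the mathematics rather than the (absent) text.

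Your argument for existence and uniqueness of the lift is essentially correct. The subdivision of $[0,1]$ into pieces on which $\gamma$ stays inside a single stratum is valid because the filtration is finite and each $\gamma^{-1}(F_i)$ is terminal; lifting on each half-open piece by covering-space theory is routine; and the extension across a transition time $s_{j+1}$ works as you say: properness and local compactness make $p^{-1}(\gamma(s_{j+1}))$ finite, the nested closures $\overline{\tilde\gamma([s,s_{j+1}))}$ are compact and nonempty with intersection $L\subset p^{-1}(\gamma(s_{j+1}))$, and connectedness of $\tilde\gamma([s,s_{j+1}))$ together with disjoint-closure separation of the fiber points forces $L$ to be a singleton. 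Uniqueness then follows stratum by stratum. That the lift is an enter path for $p^{-1}(F)$ is immediate from $p\circ\tilde\gamma=\gamma$.

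There is, however, a genuine gap in the continuity argument. You propose to cover the compact image $\tilde\gamma([0,1])$ by finitely many open sets $V\subset W'$ on which $p|_V$ is a homeomorphism onto a neighborhood of its image. This is precisely what fails at a branch point: if $\tilde\gamma(s_j)$ lies over a point of a deeper stratum where the local sheet count drops (the model case $z\mapsto z^2$ over $z=0$), then no neighborhood $V$ of $\tilde\gamma(s_j)$ has $p|_V$ injective, so such a cover does not exist. Your closing remark acknowledges the transition times are delicate, but the proposed fix — shrink the neighborhood so that ``properness bounds the number of sheets and uniqueness rules out crossing to a different sheet'' — does not address the actual obstruction, which is the absence of a local section of $p$ through the branch point. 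A correct argument here must work with neighborhoods on which $p$ is only finite-to-one, and establish the convergence $\tilde\gamma_n\to\tilde\gamma$ directly: for instance, let $a$ be the supremum of times up to which the lifts converge uniformly, use the covering-space property on the open stratum containing $\gamma(a)$ (or, if $\gamma(a)$ sits in a deeper stratum, the compactness/accumulation argument you already used for existence) to push $a$ forward, and conclude $a=1$. Alternatively, one can argue by contradiction with a sequence $t_n\to t_*$ where $d(\tilde\gamma_n(t_n),\tilde\gamma(t_n))\geq\epsilon$, extract a convergent subsequence of $\tilde\gamma_n(t_n)$ by properness, and contradict the uniqueness of the lift. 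Either of these routes circumvents the need for local homeomorphisms; the Lebesgue-number strategy as stated does not.
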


\begin{proof}
This follows by a modification of the standard argument for covering spaces.  See \cite[Proposition 4.2]{Woolf} and also \cite{Fox}.
\end{proof}

\begin{corollary}
\label{cor:def-ret-lift}
Suppose $p:W' \to W$ is branched along a filtration $F$ of $W$.  Suppose that $r:W \to \Maps_F([0,1],W)$ is an $F$-deformation retraction (in the sense of Definition \ref{def:enter}) onto a subset $K \subset W$. Then $p^{-1}(K)$ is a $p^{-1}(F)$-deformation retract of $W'$.
\end{corollary}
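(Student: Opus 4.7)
The plan is to construct the desired $p^{-1}(F)$-deformation retraction $r':W'\to\Maps_{p^{-1}(F)}([0,1],W')$ by pulling back the given retraction $r$ along $p$, using Proposition \ref{prop:branched-lifting} to lift enter paths uniquely. Concretely, for each $w'\in W'$ the composite $r\circ p$ produces an enter path $\gamma_{w'}:=r(p(w'))$ in $W$ starting at $p(w')$ and ending in $K$. I would then define
\[
r'(w'):=\tilde{\gamma}_{w'},
\]
where $\tilde{\gamma}_{w'}$ is the unique lift of $\gamma_{w'}$ with $\tilde{\gamma}_{w'}(0)=w'$ furnished by Proposition \ref{prop:branched-lifting}.

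The continuity of $r'$ is immediate from the proposition, since the assignment $(w',\gamma)\mapsto\tilde{\gamma}$ is continuous on the fiber product $W'\times_{p,W,\mathrm{ev}_0}\Maps_F([0,1],W)$, and the map $w'\mapsto (w',r(p(w')))$ into this fiber product is continuous because $r$ and $p$ are. Moreover, by Proposition \ref{prop:branched-lifting}, $\tilde{\gamma}_{w'}$ is automatically an enter path for the pulled-back filtration $p^{-1}(F)$, so $r'$ indeed lands in $\Maps_{p^{-1}(F)}([0,1],W')$.

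It remains to check the three axioms of a deformation retraction onto $p^{-1}(K)$. The identity $r'(w')(0)=w'$ holds by construction of the lift. For the endpoint condition, since $\gamma_{w'}(1)\in K$ and $p(\tilde{\gamma}_{w'}(1))=\gamma_{w'}(1)$, we have $\tilde{\gamma}_{w'}(1)\in p^{-1}(K)$. Finally, if $w'\in p^{-1}(K)$, then $p(w')\in K$ and hence $\gamma_{w'}$ is the constant path at $p(w')$; by the uniqueness of lifts, $\tilde{\gamma}_{w'}$ must be the constant path at $w'$, so $r'$ fixes $p^{-1}(K)$ pointwise at every time.

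The only even mildly delicate point is the verification that the lifted paths really are enter paths for $p^{-1}(F)$, but this is already part of the statement of Proposition \ref{prop:branched-lifting}; everything else is formal. Thus there is no substantial obstacle, and the corollary follows by assembling these observations.
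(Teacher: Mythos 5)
Your proof is essentially identical to the paper's: both construct the retraction $r'$ as the composite $W' \to W' \times_{p,W,\mathrm{ev}_0} \Maps_F([0,1],W) \to \Maps_{p^{-1}(F)}([0,1],W')$ using Proposition \ref{prop:branched-lifting}, with the paper simply stating that this composite is the required deformation retraction and you spelling out the verification of the three axioms. One remark: your argument that $r'$ fixes $p^{-1}(K)$ pointwise for all times implicitly assumes $r$ is a \emph{strong} deformation retraction (so that $r(w)$ is constant for $w \in K$); this is the convention the authors evidently have in mind (compare the explicit definition given in the proof of Lemma \ref{lem:slight-variant2}), so there is no real gap.
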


\begin{proof}
The composite map
\[
W' \to W' \times_{p,W,\mathrm{ev}_0} \Maps_F([0,1],W) \to \Maps_F([0,1],W')
\]
where the first map is $w' \mapsto (w',r(p(w')))$ and the second map is the map of Proposition \ref{prop:branched-lifting} is a deformation retraction of $W'$ onto $p^{-1}(K)$.
\end{proof}

In proving Theorem \ref{thm:retract}, we will have to consider maps which have similar features to the branched covers of Section \ref{sec:branched}, except on each stratum they restrict to more general principal bundles. Lemma \ref{lem:slight-variant2} is a slight variant of Corollary \ref{cor:def-ret-lift}, which works for this larger class of maps as well.
\begin{lemma}[A slight variant of Corollary \ref{cor:def-ret-lift}]
\label{lem:slight-variant2}
Let $p:W_1 \to W_2$ be a continuous map, and let $K_2 \subset W_2$ be a closed deformation retract. Suppose that the restriction 
$
p^{-1}(W_2 \setminus K_2) \to W_2 \setminus K_2
$
is homeomorphic to the projection from a product $F\times (W_2 \setminus K_2)\ra W_2 \setminus K_2$.

Then $p^{-1}(K_2)$ is a deformation retract of $W_1$. 
\end{lemma}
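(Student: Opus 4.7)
The plan is to lift a strong deformation retraction of $W_2$ onto $K_2$ to one of $W_1$ onto $p^{-1}(K_2)$, in the spirit of Corollary \ref{cor:def-ret-lift}. There, the branched-covering path-lifting property made the lift canonical; here the product trivialization of $p^{-1}(W_2 \setminus K_2)$ plays the analogous role, allowing us to lift paths by keeping the $F$-coordinate constant.

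First I would fix a strong deformation retraction $r_2 \colon W_2 \times I \to W_2$ onto $K_2$, and replace it by one with the enter property of Definition \ref{def:enter}: once $r_2(w_2, -)$ meets $K_2$, it remains constant at the entry point. Writing $\tau(w_2) = \inf\{s \in I : r_2(w_2, s) \in K_2\}$ and $\pi(w_2) = r_2(w_2, \tau(w_2))$, I would define $H \colon W_1 \times I \to W_1$ by setting $H(w_1, s) = w_1$ for $w_1 \in p^{-1}(K_2)$, and for $w_1 = (f, w_2) \in F \times (W_2 \setminus K_2) \cong p^{-1}(W_2 \setminus K_2)$ by setting $H(w_1, s) = (f, r_2(w_2, s))$ for $s < \tau(w_2)$, then extending to $s \geq \tau(w_2)$ as the limit $\lim_{s' \to \tau(w_2)^-}(f, r_2(w_2, s'))$, taken in $W_1$. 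By construction $H(\cdot, 0) = \mathrm{id}_{W_1}$ and $H(\cdot, 1)$ lands in $p^{-1}(K_2)$, so modulo continuity $H$ is the desired deformation retraction.

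The main obstacle is precisely the continuity of $H$. Continuity on the ``interior'' of each of the two pieces follows from continuity of $r_2$ and of the trivialization. The delicate points are continuity across the entry boundary, and near points $w_1' \in p^{-1}(K_2)$ approached from the open stratum: one must argue that the limit defining $H(w_1, s)$ for $s \geq \tau(w_2)$ exists and is unique, and that $H((f_n, w_2^{(n)}), s) \to w_1'$ whenever $(f_n, w_2^{(n)}) \to w_1'$ in $W_1$. Existence and uniqueness of the limit use that $p$ is continuous, $K_2$ is closed, and that in the applications of interest $p$ is proper with Hausdorff total space, forcing the relevant fibres to be compact; continuity near $p^{-1}(K_2)$ then follows from upper semi-continuity of $\tau$ together with continuity of $r_2$.
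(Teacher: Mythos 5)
Your proposal follows essentially the same route as the paper: freeze the $F$-coordinate and push the $W_2$-coordinate along a deformation retraction of $W_2$ onto $K_2$ that has the ``enter'' property (once in $K_2$, stay put), then glue with the identity on $p^{-1}(K_2)$. The paper is actually terser than you: it simply \emph{asserts} that the given strong deformation retraction $r$ produces $K_2$-constant paths, which does not follow from the three listed axioms alone (nothing stops $r(w)(-)$ from leaving $K_2$ after entering it). You correctly flag that a modification is needed, so in that respect you are being more careful than the source.

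However, your proposed repair is not yet airtight. First, $\tau(w_2)=\inf\{s:r_2(w_2,s)\in K_2\}$ is \emph{lower} semi-continuous when $K_2$ is closed (by the tube lemma applied to $r_2^{-1}(W_2\setminus K_2)$), not upper semi-continuous as you write; and lower semi-continuity of $\tau$ is precisely the wrong direction for $(w_2,s)\mapsto r_2(w_2,\min(s,\tau(w_2)))$ to be continuous, since $\tau$ can jump up at a limit point. The modification to an enter-path retraction is therefore not automatic for an arbitrary strong deformation retraction; one really wants $r_2$ to come with the enter property already built in, which is the case for the explicit moment-map / straight-line retractions used in the paper (Proposition~\ref{prop:Pdimtau-ret} and Lemma~\ref{lem:thick}), and is also how the paper's assertion should be read in context. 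Second, your concern about the existence of the limit at the entry time is genuine and is indeed not guaranteed by the hypotheses of the lemma as stated — a hypothesis ensuring $p^{-1}(W_2\setminus K_2)$ ``closes up'' correctly onto $p^{-1}(K_2)$ (e.g.\ properness, which holds in the applications) is being used silently. So: same strategy, you spotted real subtleties the paper suppresses, but the semi-continuity direction is reversed and the truncation step needs a hypothesis on $r_2$ rather than a generic fix.
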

\begin{proof}
Set $K_1 = p^{-1}(K_2)$.  Let us call a path $\gamma:[0,1] \to W_2$ a \emph{$K_2$-constant path} if it has the following property: if $\gamma(t) \in K_2$ then $\gamma(s) = \gamma(t)$ for all $s > t$.  In other words, once $\gamma$ enters $K_2$, it is constant.  Similarly let us define a $K_1$-constant path in $W_1$ if once it enters $K_1$, it is constant.

Using the product decomposition of $p^{-1}(W_2\setminus K_2)$, a $K_2$-constant path $\gamma:[0,1] \to W_2$ can be lifted in a canonical way to $\tilde{\gamma}:[0,1] \to W_1$ once the initial point $\tilde{\gamma}(0)$ is specified, and the assignment
\[
W_1 \times \{K_2\text{-constant paths in }W_2\} \to \{K_1\text{-constant paths in }W_1\}
\]
is continuous.

A strong deformation retraction of $W_2$ onto $K_2$ is given by a map $r:W_2 \to \Maps([0,1],W_2)$ such that
\begin{itemize}
\item $r(w)(0) = w$ for all $w$
\item $r(w)(1) \in K_2$ for all $w$
\item $r(w)(t) = w$ for all $w \in K_2$ and all $t$
\end{itemize}
For each $w$, the path $r(w):[0,1] \to W_2$ is a $K_2$-constant path.  Now we may define a map $r_1:W_1 \to \Maps([0,1],W_1)$ by the formula
\[
r_1(w_1) = \text{lift of $p \circ r_1(w_1)$ to $W_2$ with initial point $w_1$}.
\]
\end{proof}

\subsubsection{Retraction onto $\WPSp$}

\begin{definition}
\label{def:standardfiltration}
The \emph{standard toric filtration} of a toric variety $W$ is the filtration
\[
F_0 \subset F_1 \subset F_2 \subset \cdots  \subset W
\]
where each $F_i$ is the union of the torus orbits of dimension $i$ or less.
\end{definition}

\begin{proposition}
\label{prop:Pdimtau-ret}
Let $\tau \subset M$ be a lattice simplex, let $\bP^{\dim(\tau)}$ be the projective space of Definition \ref{def:projYtoPtau}, and let $\bP^{\dim(\tau)}_{\geq 0}$ be the nonnegative locus of $\bP^{\dim(\tau)}$ in the sense of Definition \ref{def:nonneglocus}.  Let $\ell$ be a homogeneous linear form on $\bP^{\dim(\tau)}$ that does not vanish on $\bP^{\dim(\tau)}_{\geq 0}$.  Let $F$ be the restriction of the standard toric filtration on $\bP^{\dim(\tau)}$ to $\bP^{\dim(\tau)} \setminus V(\ell)$.   Then 
\begin{enumerate}
\item There is an $F$-deformation retraction 
\[
r:\bP^{\dim(\tau)}\setminus V(\ell) \to \Maps_F([0,1],\bP^{\dim(\tau)}\setminus V(\ell))
\]
onto $\bP^{\dim(\tau)}_{\geq 0}$
\item $r$ may be chosen so that for any face $\tau' \subset \tau$, the restriction of $r$ to $\bP^{\dim(\tau')}$ is an $F'$-deformation retraction of $\bP^{\dim(\tau')} \setminus V(\ell')$ onto $\bP^{\dim(\tau')}_{\geq 0}$.  Here $\ell'$ is the restriction of $\ell$ to $\bP^{\dim(\tau')}$ and $F'$ is the restriction of $F$ to $\bP^{\dim(\tau')} \setminus V(\ell')$.
\end{enumerate}
\end{proposition}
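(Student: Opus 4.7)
The plan is to construct an explicit strong deformation retraction in the affine chart $\bP^{\dim(\tau)} \setminus V(\ell) \cong \{x \in \bC^{\tau^{[0]}} : \ell(x) = 1\}$. In the application of this proposition (to $\ell = \ell_\tau$ of Definition~\ref{def:ftau}), the coefficients $a_i$ of $\ell = \sum_{i \in \tau^{[0]}} a_i x_i$ are real and positive, and I will prove the proposition under this assumption; the general case reduces to it after rescaling $\ell$ by a unit complex scalar, since the non-vanishing condition on $\bP^{\dim(\tau)}_{\geq 0}$ forces the coefficients to lie in an open half-plane of $\bC$. Under this assumption, the nonnegative locus $\bP^{\dim(\tau)}_{\geq 0}$ is identified in the chart with the closed simplex $\Delta = \{x \in \bR^{\tau^{[0]}}_{\geq 0} : \sum a_i x_i = 1\}$, and the filtration $F$ becomes the stratification by coordinate vanishing loci.

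First I construct a continuous retraction onto $\Delta$ by the formula
\[
\tilde r_i(x) = \frac{\max(\mathrm{Re}(x_i), 0)}{N(x)}, \qquad N(x) = \sum_{j \in \tau^{[0]}} a_j \max(\mathrm{Re}(x_j), 0).
\]
This is well-defined because $N(x) \geq \mathrm{Re}(\ell(x)) = 1$; it lands in $\Delta$ because $\sum a_i \tilde r_i(x) = 1$; and it fixes $\Delta$ pointwise. I then define the homotopy $h(x,t) = (1-t)x + t\tilde r(x)$. The identity $\sum a_i h_i(x,t) \equiv 1$ shows that the homotopy stays in the chart, so $x \mapsto (t \mapsto h(x,t))$ defines a strong deformation retraction of the chart onto $\Delta$.

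The key step is to verify that each path $t \mapsto h(x,t)$ is an enter path for $F$, which I check coordinate by coordinate. Case (i): if $x_i = 0$ then $\tilde r_i(x) = 0$ and $h_i(x,t)$ is identically zero. Case (ii): if $\mathrm{Re}(x_i) > 0$, writing $x_i = a + ib$ with $a > 0$, the real part of $h_i(x,t)$ is $a[(1-t) + t/N(x)]$, which is strictly positive on $[0,1]$ since $N(x) \geq 1$, so $h_i(x,t) \neq 0$ throughout. Case (iii): if $x_i \neq 0$ but $\mathrm{Re}(x_i) \leq 0$, then $\tilde r_i(x) = 0$ and $h_i(x,t) = (1-t)x_i$ vanishes only at $t = 1$. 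In every case, once $h_i(x,t)$ becomes zero it remains zero, so the support of $h(x,t)$ is a non-increasing function of $t$ --- exactly the enter-path property for the coordinate stratification.

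Compatibility with faces (part (2)) follows directly: restricting to $\bP^{\dim(\tau')}$ corresponds to setting $x_i = 0$ for $i \in \tau^{[0]} \setminus \tau'^{[0]}$, and both $\tilde r$ and $h$ then reduce to the analogous formulas built from $(\tau',\ell')$. The main obstacle, and the reason for the asymmetric formula above, is case (iii) of the enter-path check: the naive homotopy using $|x_i|$ in place of $\max(\mathrm{Re}(x_i),0)$ would send a negative real coordinate through $0$ at $t = 1/2$ and then out to $|x_i| > 0$, violating the enter-path property. Using $\max(\mathrm{Re}(x_i),0)$ converts this reflection into a monotone collapse to $0$, and the strict positivity of the $a_i$ ensures that the normalizing factor $N(x)$ stays bounded away from zero.
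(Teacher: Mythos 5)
Your proof takes the same overall strategy as the paper's — pass to the affine chart $\{\ell=1\}$, choose a retraction $s$ onto the nonnegative locus that fixes it pointwise and preserves the toric strata, and use the straight-line homotopy $t\mapsto (1-t)x + ts(x)$ — but your choice of $s$ differs, and the difference matters. The paper takes $s$ to be the moment-map retraction, which in homogeneous coordinates is $[x_0:\cdots:x_d]\mapsto[|x_0|:\cdots:|x_d|]$, and asserts without further comment that the resulting straight lines are enter paths. Your ``obstacle'' remark is precisely why that assertion is not automatic: on $\bP^1$ with $\ell=x_0+x_1$, the straight line in the chart from $(x_0,x_1)=(-1,2)$ to its moment-map retraction $(1/3,2/3)$ passes through the fixed point $(0,1)$ at $t=3/4$ and then leaves it, violating the enter-path condition. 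Replacing $|\cdot|$ by $\max(\mathrm{Re}(\cdot),0)$ is exactly what rules this out, since a coordinate whose real part is nonpositive then collapses monotonically to $0$ rather than crossing it. Your three-case verification of the enter-path property is correct, the bound $N(x)\geq\mathrm{Re}(\ell(x))=1$ is the right way to see well-definedness, and the face-compatibility in part (2) is immediate as you say. In effect your argument is more careful on this point than the paper's.

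The one caveat is the reduction to real positive coefficients, which is not valid in the generality you claim. Rescaling $\ell$ by a unit complex scalar can rotate all the $a_i$ into the open right half-plane, but cannot make them simultaneously real and positive unless they already share a common argument. When the $a_i$ are genuinely complex with $\mathrm{Re}(a_i)>0$, the chart image of $\bP^{\dim(\tau)}_{\geq 0}$ under $[x]\mapsto x/\ell(x)$ is no longer the real simplex $\{x\in\bR^{\tau^{[0]}}_{\geq 0}:\sum a_ix_i=1\}$, because the quotients $x_i/\ell(x)$ are generally not real; your formula for $\tilde r$ and the enter-path case analysis both rely on the target simplex having real coordinates. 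This is harmless for the paper's use of the proposition, since the coefficients of $\ell_\tau$ are real and positive by the normalization below Equation~(\ref{eq:f}), but as a proof of the proposition at its stated level of generality there is a gap to fill — either restrict the hypothesis to what is actually used, or rework the construction in homogeneous coordinates so that no reduction is needed.
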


\begin{proof}
For any two points $P,Q$ in $\bP^{\dim(\tau)} \setminus V(\ell)\cong \AA^{\dim(\tau)}$, let $\overline{PQ}$ be the real line segment between them.  Since each $F_i$ is an affine subspace, if $P$ and $Q$ are in $F_i$ then so is $\overline{PQ}$.  To produce an $F$-deformation retraction, it is enough to find a map $s:\bP^{\dim(\tau)} \setminus V(\ell) \to \bP^{\dim(\tau)}_{\geq 0}$ so that 
\begin{itemize}
\item $s(P) = P$ for all $P \in \bP^{\dim(\tau)}_{\geq 0}$.
\item $s(F_i) \subset F_i$ for all $i$
\end{itemize}
In that case the map $r$ given by $r(Q) = \overline{Q s(Q)}$ is an $F$-deformation retraction.  A suitable $s$ is given by the moment map of Proposition \ref{prop:moment}, and by Remark \ref{rem:moment-compatible}, the deformation retractions we build in this way will have property (2) of the Proposition.  
\end{proof}

\begin{theorem}
\label{thm:embedding}
The inclusion $\WPSp \hookrightarrow X_0$ admits a deformation retraction.
\end{theorem}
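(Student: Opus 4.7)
The plan is to build the deformation retraction simplex-by-simplex on the affine components $X_{0,\tau}$ for top-dimensional $\tau \in \cT$, then glue. Proposition \ref{propembeddingIb} already gives the embedding $j:\WPSp \hookrightarrow X_0$; what remains is to retract $X_0$ onto its image. For each $\tau$, Proposition \ref{prop:Pdimtau-ret} furnishes an $F$-deformation retraction $r_\tau$ of $\bP^{\dim(\tau)}\setminus V(\ell_\tau)$ onto $\bP^{\dim(\tau)}_{\geq 0}$, where $F$ is (the restriction of) the standard toric filtration. By Proposition \ref{prop:degenaffine}, the map $p_\tau: X_{0,\tau}\to \bP^{\dim(\tau)}\setminus V(\ell_\tau)$ is a finite proper surjection whose ramification locus is contained in the coordinate hyperplanes; hence $p_\tau$ is branched along $F$ in the sense of Definition \ref{def:enter}. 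Corollary \ref{cor:def-ret-lift} then lifts $r_\tau$ to a $p_\tau^{-1}(F)$-deformation retraction of $X_{0,\tau}$ onto $p_\tau^{-1}(\bP^{\dim(\tau)}_{\geq 0})$.

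Next I would identify this preimage with the portion of $j(\WPSp)$ lying in $X_{0,\tau}$. By Proposition \ref{propembeddingIa}(2), the nonnegative locus $(X_{0,\tau})_{\geq 0}$ maps homeomorphically onto $\bP^{\dim(\tau)}_{\geq 0}$, so $p_\tau^{-1}(\bP^{\dim(\tau)}_{\geq 0})$ is the $D_\tau$-orbit of $(X_{0,\tau})_{\geq 0}$, which is exactly $\bigcup_{d\in D_\tau} j_{\tau,d}(\tau)$. Thus the image of the retraction on $X_{0,\tau}$ is precisely the union of cells of $j(\WPSp)$ projecting to $\tau$.

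The main point is to verify that these local retractions glue to a well-defined deformation retraction on all of $X_0$. Here I rely on the compatibility clause (2) of Proposition \ref{prop:Pdimtau-ret}: for any face $\tau'\subset\tau$, the retraction $r_\tau$ restricts on $\bP^{\dim(\tau')}\setminus V(\ell_{\tau'})$ to $r_{\tau'}$. Since $X_{0,\tau'}\subset X_{0,\tau}$ is compatible with the covers (the map $p_\tau$ restricts to $p_{\tau'}$ up to the canonical identifications of orbit closures in Section \ref{sec:orbitclosures}), the uniqueness of enter-path lifts in Proposition \ref{prop:branched-lifting} forces the lifted retraction on $X_{0,\tau}$ to restrict to the lifted retraction on $X_{0,\tau'}$. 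Consequently the family $\{r_\tau\}$ descends via Remark \ref{rem:mapoutofcell}(2) to a deformation retraction of $X_0$ onto $j(\WPSp)$.

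The hard part is precisely this gluing step: one must check that the moment-map-based retractions of Proposition \ref{prop:Pdimtau-ret} are compatible with the inclusions of strata, and that unique path lifting transports this compatibility to the branched covers $p_\tau$. The technical ingredient is the face-compatibility of the moment map (Remark \ref{rem:moment-compatible}), together with the fact that ramification of $p_\tau$ stays inside the toric boundary, so the relevant retraction paths are enter paths through strata where lifts exist uniquely.
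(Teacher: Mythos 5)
Your proposal is correct and follows essentially the same route as the paper: lift the moment-map retraction of Proposition~\ref{prop:Pdimtau-ret} along the branched cover $p_\tau$ via Corollary~\ref{cor:def-ret-lift}, then glue over $\cT$ using the face-compatibility of part (2) of Proposition~\ref{prop:Pdimtau-ret} and Remark~\ref{rem:mapoutofcell}. The extra details you supply (identifying $p_\tau^{-1}(\bP^{\dim(\tau)}_{\geq 0})$ with the $D_\tau$-orbit of $(X_{0,\tau})_{\geq 0}$, and invoking uniqueness of enter-path lifts to justify compatibility on faces) are sound elaborations of the same argument rather than a different approach.
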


\begin{proof}
Since $X_{0,\tau} \to \bP^{\dim(\tau)} \setminus V(\ell_\tau)$ is branched along the standard toric filtration of $\bP^{\dim(\tau)}$, Proposition \ref{prop:Pdimtau-ret} and Corollary \ref{cor:def-ret-lift} together imply that $p_\tau^{-1}(\bP^{\dim(\tau)}_{\geq 0})$ is a deformation retract of $X_{0,\tau}$.  Moreover by part (2) of Proposition \ref{prop:Pdimtau-ret}, these deformation retractions are compatible with inclusions $X_{0,\tau'} \subset X_{0,\tau}$.  By Remark \ref{rem:mapoutofcell}, they therefore assemble to a deformation retraction of $X_0$ to $\WPSp$.
\end{proof}

\section{Log geometry and the Kato-Nakayama space}
\label{sec:log}

We recall the definition of a log space $X^\dag$ from \cite{Ka89} and the associated Kato-Nakayama space $X_\log$ from \cite{KN99} and \cite{NO10}.  We work with log structures in the analytic topology, which are treated in \cite{KN99}.

\subsection{Log structures and log smoothness}
\label{subsec:logdefs}
For us, a \emph{monoid} is a set with binary operation that is commutative, associative and has a unit. For each monoid $\sf M$, there is a unique group $\sf M^\gp$ called the \emph{Grothendieck group of $\sf M$} together with a map $\sf M\ra \sf M^\gp$ satisfying the universal property that every homomorphism from $\sf M$ to a group factors uniquely through $\sf M\ra \sf M^\gp$. A monoid is called \emph{integral} if $\sf M\ra \sf M^\gp$ is injective. Equivalently, the cancellation law holds in $\sf M$: $ab=ac\Rightarrow b=c$. 
A finitely generated and integral monoid is called \emph{fine}.
An integral monoid $\sf M$ is called \emph{saturated} if $x\in {\sf M}^\gp, x^n\in \sf M$ implies
$x\in \sf M$. A finitely generated, saturated monoid is called \emph{toric}. 
 
 \begin{example}
 If $\sigma \subset \bR^k$ is a rational polyhedral cone, then $\bZ^k \cap \sigma$ is a toric monoid. \end{example}
 
Let $X$ be an analytic space.
A \emph{pre-log structure} for $X$ is a sheaf of monoids $\shM_X$ together with a map of monoids $\alpha_X:\shM_X\ra\shO_X$ where we use the multiplicative structure for the structure sheaf. 
We call $(\shM_X,\alpha_X)$ a \emph{log structure} if $\alpha_X$ induces an isomorphism on invertible elements $\shM_X^\times \stackrel{\sim}{\ra} \shO_X^\times$. Given a (pre-)log structure $(\shM_X,\alpha_X)$, the triple $X^\dag = (X,\shM_X,\alpha_X)$ is called a (pre-)log space. Pre-log spaces naturally form a category on which we have a forgetful functor to the category of analytic spaces via $X^\dag\mapsto X$. This functor factors through the category of log spaces by the functor which associates a log structure to a pre-log structure. 
This is done by replacing $(\shM_X,\alpha_X)$ by the \emph{associated log structure} 
$(\shM^a_X, \alpha^a_X)$ given as
$$\shM^a_X=(\shM_X\oplus\shO^\times_X)/\{(m,\alpha_X(m)^{-1})\,|\,m\in\shM^\times_X=\alpha_X^{-1}\shO^\times_X\}$$
with $\alpha^a_X(m,f)=f\cdot \alpha_X(m)$.
Most of the time we will omit $\alpha_X$, assume it as known and refer to a log structure just by its sheaf of monoids.

\begin{example}
\label{ex:triviallog}
If $(X,\cO_X)$ is an analytic space, the \emph{trivial log structure} on $X$ is 
given by $\shM_X = (\cO_X)^\times,$ with $\alpha_X$ the inclusion map.
\end{example}

\begin{example}
\label{ex:divisoriallog}
If $(X,\cO_X)$ is an analytic space and $D\subset X$ a divisor, the \emph{divisorial log structure} $\shM_{(X,D)}$ on $X$ is 
given by $\shM_{(X,D)} = \cO_X\cap j_*\shO^\times_{X\setminus D},$ with $j:X\setminus D\ra X$ the open embedding and $\alpha_X$ the inclusion map.
\end{example}

\subsubsection{The standard toric log structure on a toric variety}
\label{subsubsec:logtoric}
Each toric variety $W$ has a natural divisor $D$ which is the complement of the open torus. Thus by Example~\ref{ex:divisoriallog}, $W$ carries the divisorial log structure $\shM_{(W,D)}$ which we call the standard log structure on $W$. We give another description for it in here.

\begin{definition}
\label{def:coherent}
A log space $(W,\shM_W)$ is called \emph{coherent} if each $x\in W$ has a neighborhood $U$ and a monoid $P$ with a map from the constant sheaf of monoids $P\ra \shM_U$ such that the pre-log structure associated to the composition $P\ra \shM_U\ra\shO_U$ coincides with the log structure $\shM_U$. The data $P\ra\shM_U$ is called a \emph{chart} of the log structure on $U$. 
\end{definition}

For a coherent log structure, we carry over properties of monoids.  For example, we call a coherent log structure \emph{fine} if there exists an open cover $\{U_i\}$ by charts $P_i\ra U_i$ with $P_i$ fine monoids.

If $\sigma \subset M_\bR$ is a rational polyhedral cone then the single chart
\[
M \cap \sigma \to \bC[M \cap \sigma]
\]
determines a coherent log structure on the affine toric variety $\Spec(\bC[M \cap \sigma])$.  If $W$ is any toric variety, these assemble to natural log structure on $W$ with charts induced by the canonical maps $\sigma \to \bC[M \cap \sigma]$ for each toric open set $\Spec \CC[M \cap \sigma]$ of $W$.  These log structures are fine and saturated.  For more see \cite[Example 2.6]{Ka96}.

\begin{example}
\label{ex:logaffineline}
The affine line $\bA^1 = \Spec(\bC[t])$ has a toric log structure whose chart $\bZ_{\geq 0} \to \bC[t]$ is given by $k \mapsto t^k$.  If $\cM_{\bA^1}$ denotes the sheaf of monoids and $U \subset \bA^1$ is an analytic open subset, then
\[
\Gamma(U,\cM_{\bA^1}) = \bigg\{
\begin{array}{ll}
\Gamma(U,\cO^\times) & \text{if $U$ does not contain $0$} \\
\bZ_{\geq 0} \oplus \Gamma(U,\cO^\times) & \text{if $U$ does contain $0$}
\end{array}
\]
\end{example}

\subsubsection{The log structure on a hypersurface}
\label{subsubsec:loghyper}

\begin{definition}
\label{def:logpullback}
If $u:X \to Y$ is a map of analytic spaces and $\shM_Y$ is a log structure on $Y$, the \emph{pullback log structure} is defined as the associated log structure to the pre-log structure given by the composition $u^{-1}\shM_Y\ra f^{-1}\shO_Y\ra\shO_X$.
\end{definition}

If $W$ is a toric variety and $Z \subset W$ is a hypersurface, we may pull back the log structure of Section \ref{subsubsec:logtoric} along the inclusion map $Z\hookrightarrow W$.  The charts of the log structure are of the form
\[
M \cap \sigma \to \bC[M \cap \sigma] \to \bC[M \cap \sigma]/f
\]
if $f = 0$ is the local equation of $Z$ in the chart $\Spec(\bC[M \cap \sigma]) \subset W$.

\begin{example}
\label{ex:logpoint}
If $(\bA^1,\cM_{\bA^1})$ is the log affine line of Example \ref{ex:logaffineline} and $0:\Spec \bC \to \bA^1$ is the origin, then the induced log structure on $\Spec \bC$ is given by the chart $\bZ_{\geq 0} \to \bC$ that carries each $k > 0$ to $0$.  This is the \emph{standard log point} of \cite[Definition 4.3]{Ka96}.  We denote it by $\Spec \bC^{\dagger}$.  The monoid is $\bZ_{\geq 0} \oplus \bC^*$. 
\end{example}

\subsubsection{Log smoothness}

A map of log spaces is called \emph{smooth} if it satisfies a lifting criterion for log first order thickenings.   A log space is smooth if the projection to a point with trivial log structure is smooth.  We do not recall the precise definitions here, see \cite[Section 3]{Ka96}.  A standard argument shows that many of the varieties and maps of Section \ref{sec:toricdegeneration} are log smooth.  We record the facts here.

Let $\Gamma_{\geq h}$ and $Y = \Spec(\bC[\tiM \cap \Gamma_{\geq h}])$ be as in Sections 
\ref{subsubsec:Gamma} and \ref{subsubsec:Y}, and let $\pi:Y \to \bA^1$ be the degeneration of Proposition \ref{prop:ambientdegeneration}.  Let $X \subset Y$ be the hypersurface of Section \ref{sec:32}.  We endow $Y$ with the log structure of Section \ref{subsubsec:logtoric} which we denote by $\cM_Y$, $\bA^1$ with the log structure of Example \ref{ex:logaffineline} which we denote by $\cM_{\bA^1}$, and $X$ with the log structure of Section \ref{subsubsec:loghyper} which we denote by $\cM_X$.

The map $\pi:Y \to \bA^1$ upgrades to a map of log spaces $\pi^\dagger:(Y,\cM_Y) \to (\bA^1,\cM_{\bA^1})$ in a unique way.  We abuse notation and also use $\pi^\dagger$ for the restriction $(X,\cM_X) \to (\bA^1,\cM_{\bA^1})$.

\begin{lemma}
\label{lem:smooth1}
The map $\pi^{\dagger}:(X,\cM_X) \to (\bA^1,\cM_{\bA^1})$ is log smooth.
\end{lemma}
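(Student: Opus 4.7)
The plan is to verify Kato's criterion for log smoothness étale-locally on $X$, exploiting the toric structure of $Y$ and the genericity of the coefficients $a_m$. Since log smoothness is étale-local on the source, it suffices to check it at each point $x\in X$. On the preimage of $\bC^*$, both log structures are trivial and $X\to\bC^*$ is smooth by the $\sympol$-regularity of $\overline Z$ (Remark \ref{remgeneric}); so we may assume $x\in X_0$. Let $\tau\in\cT$ be the smallest simplex with $x$ in the open torus orbit of $Y_{0,\tau}$. The ambient map $(Y,\cM_Y)\to(\bA^1,\cM_{\bA^1})$ is log smooth by Kato's criterion applied to the toric chart $\bZ_{\geq 0}\to\tiM\cap\Gamma_{\geq h}$, $1\mapsto(0,1)$: the induced homomorphism $\bZ\to\tiM$ is injective with torsion-free cokernel $M$, and $Y$ is the classical fiber product $\bA^1\times_{\Spec\bZ[\bZ_{\geq 0}]}\Spec\bZ[\tiM\cap\Gamma_{\geq h}]$.

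Next, pass to $X$. Since $x$ lies in the open orbit of $Y_{0,\tau}$, every monomial $z^{(m,h(m))}$ with $m\in\tau^{[0]}$ is a unit in $\cO_{Y,x}$. Fixing any such vertex $m_0\in\tau^{[0]}$, the equation $\tilde f=0$ can be solved near $x$ as
\[
z^{(m_0,h(m_0))} \;=\; -a_{m_0}^{-1}\Bigl(a_0+\sum_{m\in\cT^{[0]}\setminus\{m_0\}}a_m z^{(m,h(m))}\Bigr),
\]
and the right-hand side is a unit at $x$. Hence the image of $(m_0,h(m_0))$ under the chart map $\tiM\cap\Gamma_{\geq h}\to\cM_{X,x}$ lies in $\cO_{X,x}^\times$, so the log structure $\cM_X$ near $x$ is equivalently described by the reduced chart $Q:=q(\tiM\cap\Gamma_{\geq h})$, where $q:\tiM\to\tiM/\bZ(m_0,h(m_0))$ is the projection. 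The composition $\bZ\to Q^\gp=\tiM/\bZ(m_0,h(m_0))$, $1\mapsto(0,1)$, remains injective — since $(0,1)$ and $(m_0,h(m_0))$ are linearly independent for $m_0\neq 0$ — with torsion-free cokernel $M/\bZ m_0$. Combined with the classical smoothness of the resulting model, guaranteed by the $\sympol$- and $\tau$-regularity in Remark \ref{remgeneric}, Kato's criterion \cite[Theorem 3.5]{Ka89} yields log smoothness of $\pi^\dagger$ at $x$.

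The principal technical obstacle is the last assertion: verifying that after reducing the chart, the induced classical model $\bA^1\times_{\Spec\bZ[\bZ_{\geq 0}]}\Spec\bZ[Q]$ is smooth over $\bA^1$ at the image of $x$. This reduces to the transversality of $X$ with the toric stratification of $Y$, which is exactly the content of the regularity conditions on $\overline Z$ and $\overline Z_\tau$. Alternatively, the entire argument can be packaged as a general lemma to the effect that a hypersurface in a log smooth toric variety whose restriction to each torus orbit is smooth and reduced inherits a log smooth structure over the base.
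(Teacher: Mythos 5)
The paper does not actually write a proof of this lemma; it only remarks (just before Definition \ref{def:Kato-Nakayama}) that Lemmas \ref{lem:smooth1}--\ref{lem:smooth3} ``follow directly from K.~Kato's toroidal characterization of log smoothness, see \cite[Theorem 4.1]{Ka96}.'' Your strategy of invoking Kato's chart criterion and reducing the chart by solving $\tilde f=0$ for an invertible monomial is precisely the argument the authors have in mind, so the approach is the right one. However, several of the intermediate claims are not correct as written.

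First, the reduction to $x\in X_0$ is incomplete: the assertion that ``on the preimage of $\bC^*$ both log structures are trivial'' is false whenever $0\in\partial\sympol$. In that case $\pi^{-1}(\bC^*)\cong\Spec\bC[(\RR_{\ge 0}\sympol)\cap M]\times\bC^*$ by Proposition \ref{prop:ambientdegeneration}(1), which has horizontal toric divisors, so $\cM_Y$ is nontrivial there and $\cM_X$ is nontrivial along $X\cap$ (horizontal divisor). The same chart-reduction argument you run on $X_0$ can be carried out at such points, but the shortcut as stated does not cover them.

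Second, the claim that the cokernel $M/\bZ m_0$ is torsion-free is wrong in general: a vertex $m_0\in\tau^{[0]}$ need not be a primitive lattice vector (e.g.\ $m_0=(2,0)$ in Example \ref{ex:degeneration}), in which case $M/\bZ m_0$ has torsion. This does not derail the proof since over $\bC$ the Kato criterion only requires the torsion of the cokernel to have order invertible in the base, but the assertion and the parenthetical justification (``linearly independent for $m_0\neq 0$'') are mathematically incorrect.

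Third, and most substantively, you misstate what Kato's criterion asks you to verify. The criterion does \emph{not} require the toric model $\bA^1\times_{\Spec\bZ[\bZ_{\ge 0}]}\Spec\bZ[Q]$ to be classically smooth over $\bA^1$ (indeed that model typically is \emph{not} smooth over $\bA^1$: its central fiber is a union of toric strata). What must be verified is that the map $X\to \bA^1\times_{\Spec\bZ[\bZ_{\ge 0}]}\Spec\bZ[Q]$ is \emph{\'etale}. You correctly identify that this \'etaleness is a transversality condition on $X$ with respect to the toric stratification of $Y$, which is where the positivity/genericity of the $a_m$ enters, so your intuition about what needs proving is sound; but the sentence phrasing the ``principal technical obstacle'' in terms of smoothness of the toric model over $\bA^1$ is a genuine conceptual slip.

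Finally, a smaller point: $Q:=q(\tiM\cap\Gamma_{\ge h})$ is not literally a chart for $\cM_X$ — the monomial map $m\mapsto z^m$ does not factor through $q$ unless $z^{(m_0,h(m_0))}=1$. The standard fix is to localize the chart at $(m_0,h(m_0))$ (which makes it a unit of the monoid) and then, \'etale-locally, lift the characteristic monoid $\overline{\cM}_{X,x}$ to a chart; this produces a chart of the correct rank, but it requires the usual \'etale-local lifting lemma for fine log structures and is not simply the set-theoretic image under $q$. With these corrections the argument goes through and matches what the paper attributes to Kato's toroidal criterion.
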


Let $\overline{Y}$ and $\overline{X}$ be as in Section \ref{sec:tordegcompact} and let $\overline{X}$, and furnish them with the log structures of Section \ref{subsubsec:logtoric} and \ref{subsubsec:loghyper}.  We denote the log structure on $\overline{Y}$ by $\cM_{\overline{Y}}$ and the log structure on $\overline{X}$ by $\cM_{\overline{X}}$.  The maps $\overline{\pi}$ of Section \ref{sec:tordegcompact} upgrade to maps of log spaces $(\overline{Y},\cM_{\overline{Y}}) 
\to (\bA^1,\cM_{\bA^1})$ and $(\overline{X},\cM_{\overline{X}}) \to (\bA^1,\cM_{\bA^1})$.  We again abuse notation and denote both of these maps by $\overline{\pi}^{\dagger}$.

\begin{lemma}
\label{lem:smooth2}
The map $\overline{\pi}^{\dagger}:(\overline{X},\cM_{\overline X}) \to (\bA^1,\cM_{\bA^1})$ is log smooth.
\end{lemma}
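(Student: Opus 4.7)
The argument is local on $\overline{X}$ and closely parallels Lemma \ref{lem:smooth1}. Since log smoothness of $\pi^\dagger : (X,\cM_X) \to (\bA^1,\cM_{\bA^1})$ is already established there, I would only need to verify log smoothness in neighbourhoods of points lying on the horizontal divisors of $\overline\pi$. First, cover $\overline Y$ by its standard affine toric charts
\begin{equation*}
U_\tau = \Spec \bC[\sigma_\tau^\vee \cap \tiM]
\end{equation*}
indexed by the vertices $\tau$ of $\overline\Gamma$, each endowed with its standard toric log structure induced by the chart $P_\tau := \sigma_\tau^\vee \cap \tiM \to \cO_{U_\tau}$. The log structure $\cM_{\overline X}|_{U_\tau}$ is by definition the pullback of this along the closed embedding $\overline X \cap U_\tau \hookrightarrow U_\tau$.

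On each such chart, the map $\overline\pi$ is given by the monomial $z^{(0,1)}$, and so $\overline\pi^\dagger$ is locally controlled by the monoid homomorphism $\bZ_{\geq 0} \to P_\tau$, $1\mapsto (0,1)$. To apply Kato's chart criterion for log smoothness, I would, near each point $p\in \overline X \cap U_\tau$, produce a chart for $\cM_{\overline X}$ obtained from $P_\tau$ after absorbing the hypersurface relation $\tilde f = 0$; then one must check that (i) the cokernel of $\bZ(0,1) \to \tiM$ is torsion-free, which is automatic since $(0,1)$ is primitive in $\tiM$, and (ii) the induced morphism on underlying schemes is smooth in the classical sense. Condition (ii) is precisely the transversality of $\tilde f$ to every torus orbit of $\overline Y$, which is the content of the $\sympol$-regularity and $\tau$-regularity hypotheses of Remark \ref{remgeneric}: the restriction of $\tilde f$ to every closed toric stratum is smooth and reduced.

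The main obstacle is the choice of a suitable local chart for $\cM_{\overline X}$ at points of $\overline X$ lying simultaneously on a horizontal and a vertical divisor, where several toric coordinates vanish and $\tilde f$ involves both the parameter $t$ and those coordinates. The combinatorial bookkeeping here is organised by the face structure of $\overline\Gamma$, with vertical and horizontal strata classified as in Proposition \ref{prop:vhdivisors}; the argument for each such chart is a direct extension of the verification already carried out on the interior components $X_{0,\tau}$ in Lemma \ref{lem:smooth1}, with the stratum-wise regularity statements of Remark \ref{remgeneric} supplying the needed transversality on horizontal strata. Since log smoothness is local, these chart-by-chart verifications assemble to the global statement.
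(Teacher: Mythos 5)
The paper itself offers no proof of this lemma; immediately after Lemma~\ref{lem:smooth3} there is only a remark that Lemmas~\ref{lem:smooth1}--\ref{lem:smooth3} ``follow directly from K.\ Kato's toroidal characterization of log smoothness,'' so your task was to reconstruct a proof from scratch. Your strategy --- reduce to the affine toric charts $U_\tau = \Spec\bC[\sigma_\tau^\vee\cap\tiM]$ of $\overline Y$, apply Kato's chart criterion, and feed in the $\sympol$- and $\tau$-regularity of Remark~\ref{remgeneric} as the transversality input --- is exactly the right line of attack and matches what the paper's terse pointer to \cite[Theorem 4.1]{Ka96} intends.

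There is, however, a real gap at the crux of the argument. The monoid $P_\tau=\sigma_\tau^\vee\cap\tiM$ does give a chart for the pullback log structure $\cM_{\overline X}|_{U_\tau}$, but it \emph{cannot} be the chart on which you verify Kato's condition: with $Q=\bZ_{\geq0}\to P_\tau$, the scheme $\bA^1\times_{\Spec\bZ[Q]}\Spec\bZ[P_\tau]$ is just $U_\tau$, and the induced map $\overline X\cap U_\tau\to U_\tau$ is a closed immersion of a Cartier divisor --- never smooth. So condition (ii) fails outright for $P_\tau$. You gesture at this with the phrase ``after absorbing the hypersurface relation $\tilde f = 0$,'' but you never say what the resulting chart is, and yet your condition (i) check (``the cokernel of $\bZ(0,1)\to\tiM$ is torsion-free'') is computed as though the chart monoid still groupifies to all of $\tiM$, which it cannot if the chart has genuinely shrunk. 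The actual work, which your proposal leaves implicit, is to construct at each point $p\in\overline X\cap U_\tau$ an \'etale-local chart $P'\subset P_\tau$ of one smaller rank --- roughly, the free monoid on the toric coordinates that vanish at $p$, which must contain $(0,1)$ --- and then use $\sympol$-regularity (the nondegeneracy of $d\tilde f$ restricted to each closed toric stratum) to show that $\tilde f$ can serve as the complementary \'etale coordinate, so that $\overline X\cap U_\tau\to\Spec\bC[P']$ is indeed smooth. Only with this $P'$ does Kato's criterion apply, and condition (i) should then be checked for $\bZ(0,1)\to(P')^{\gp}$, which still holds since $(0,1)$ is primitive in $\tiM$ and hence in any saturated sublattice containing it.
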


Let $\cM_{X_0}$ denote the log structure on $X_0$ induced by $\cM_X$ under the inclusion map $X_0 \hookrightarrow X$.  Let $\cM_{\overline{X}_0}$ denote the log structure on $\overline{X}_0$ induced by $\cM_{\overline{X}}$ under the inclusion map $\overline{X}_0 \hookrightarrow \overline{X}$.  Then we have Cartesian diagrams of log spaces
\[
\begin{array}{cc}
{
\xymatrix{
(\overline{X}_0,\cM_{\overline{X}_0}) \ar[r] \ar[d] & (\overline{X},\cM_{\overline{X}}) \ar[d] \\
(\Spec \bC^{\dagger},\cM_{\Spec \bC^{\dagger}}) \ar[r] & (\bA^1,\cM_{\bA^1})
}
}
&
{
\xymatrix{
(X_0,\cM_{X_0}) \ar[r] \ar[d] & (X,\cM_{X}) \ar[d] \\
(\Spec \bC^{\dagger},\cM_{\Spec \bC^{\dagger}}) \ar[r] & (\bA^1,\cM_{\bA^1})
}
}
\end{array}
\]

We denote the maps $\overline{X}_0 \to \Spec \bC^{\dagger}$ and $X_0 \to \Spec \bC^{\dagger}$ by $\overline{\pi}_0^{\dagger}$ and $\pi_0^{\dagger}$ respectively.  We have a similar map $Y_0 \to \Spec \bC^{\dagger}$, and sometimes we abuse notation and denote it by $\pi_0^{\dagger}$ as well.

\begin{lemma}
\label{lem:smooth3}
The maps $\pi_0^{\dagger}$ and $\overline{\pi}_0^{\dagger}$ are log smooth.
\end{lemma}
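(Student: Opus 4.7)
The plan is to deduce Lemma~\ref{lem:smooth3} directly from Lemmas~\ref{lem:smooth1} and~\ref{lem:smooth2} by invoking the stability of log smoothness under base change. Recall that in the category of fine log analytic spaces, if $f^\dagger : U^\dagger \to V^\dagger$ is log smooth and $W^\dagger \to V^\dagger$ is any morphism of fine log spaces, then the projection $U^\dagger \times_{V^\dagger} W^\dagger \to W^\dagger$ is again log smooth; this is standard, see \cite[Proposition~3.4]{Ka89}. The excerpt already displays two Cartesian squares in which $\pi_0^\dagger$ and $\overline{\pi}_0^\dagger$ appear as the left-hand vertical maps, obtained from $\pi^\dagger$ and $\overline{\pi}^\dagger$ by base change along the standard log point inclusion $\Spec \bC^\dagger \to (\bA^1, \cM_{\bA^1})$.

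The only genuine work is therefore to verify that the two displayed squares really are fiber products in the category of fine log spaces, not merely Cartesian on underlying spaces. Concretely I would argue as follows. First, the underlying diagram is Cartesian by definition of $X_0$ (resp.\ $\overline X_0$) as the fiber $\pi^{-1}(0)$ (resp.\ $\overline\pi^{-1}(0)$). Second, the log structure $\cM_{X_0}$ (resp.\ $\cM_{\overline X_0}$) was defined by pulling $\cM_X$ (resp.\ $\cM_{\overline X}$) back along the closed immersion of $X_0$ into $X$ (resp.\ $\overline X_0$ into $\overline X$). It then suffices to check that this pullback log structure agrees with the log structure coming from the fibered coproduct of charts
\[
(\cM_X|_{X_0})^a \;=\; \bigl( (\pi^\dagger)^{-1}\cM_{\Spec \bC^\dagger} \oplus_{(\pi^\dagger)^{-1}\cM_{\bA^1}} \cM_X|_{X_0} \bigr)^a,
\]
which holds because $\cM_{\Spec \bC^\dagger}$ is itself the pullback log structure along $0 \hookrightarrow \bA^1$ and pullback of log structures commutes with composition. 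The identical argument applies to $\overline X_0$.

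Granted this identification, the two squares are fiber products in fine log spaces, and Kato's base change theorem immediately yields log smoothness of $\pi_0^\dagger$ and $\overline\pi_0^\dagger$. The remark about $Y_0 \to \Spec \bC^\dagger$ follows in exactly the same way from the log smoothness of $\pi^\dagger : (Y, \cM_Y) \to (\bA^1, \cM_{\bA^1})$, which is part of the content used to prove Lemma~\ref{lem:smooth1} (the map of toric varieties $Y \to \bA^1$ corresponding to the monoid map $\bZ_{\geq 0} \to \tiM \cap \Gamma_{\geq h}$, $k \mapsto k(0,1)$, is log smooth because $(0,1)$ lies in the interior of every maximal face of $\Gamma_{\geq h}$ meeting the hyperplane $\{r = 0\}$ only at the origin in the appropriate sense). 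No serious obstacle is anticipated; the only point requiring care is the identification of pullback log structures with log fiber products, which is formal once one writes out the charts.
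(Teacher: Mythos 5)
Your proposal is correct, and it takes a route that the paper merely gestures at rather than spells out. The paper's remark lumps Lemmas~\ref{lem:smooth1}--\ref{lem:smooth3} together and says they all ``follow directly from K.~Kato's toroidal characterization of log smoothness,'' which suggests a chart computation carried out separately for each map (including $\pi_0^\dagger$ over the standard log point). You instead deduce Lemma~\ref{lem:smooth3} from Lemmas~\ref{lem:smooth1} and \ref{lem:smooth2} via stability of log smoothness under base change. The crucial point you identify --- that the displayed squares are Cartesian in the category of fine log spaces and not merely on underlying analytic spaces --- is exactly right, and your justification is sound: the inclusion $\Spec\bC^\dagger \hookrightarrow (\bA^1,\cM_{\bA^1})$ is \emph{strict} (the log point's log structure is by definition the pullback, Example~\ref{ex:logpoint}), strict morphisms are stable under base change, so the log structure on the log fiber product coincides with the pullback log structure $\cM_{X_0}$ (resp.\ $\cM_{\overline{X}_0}$), which is precisely how the paper defined it. This makes your argument formally cleaner than re-running the chart criterion on each $\pi_0^\dagger$: once Lemmas~\ref{lem:smooth1}/\ref{lem:smooth2} are in hand, Lemma~\ref{lem:smooth3} is pure formalism, which both localizes the nontrivial content and makes the three lemmas logically modular rather than three parallel verifications.

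One small criticism: your parenthetical explanation of why $(Y,\cM_Y)\to(\bA^1,\cM_{\bA^1})$ is log smooth (``$(0,1)$ lies in the interior of every maximal face \dots in the appropriate sense'') is vague and not really the right statement. The clean reason, in terms of the chart criterion you are implicitly invoking, is that the monoid map $\bZ_{\geq 0}\to\tiM\cap\Gamma_{\geq h}$, $k\mapsto k(0,1)$, induces on groups the split injection $\bZ\hookrightarrow\tiM$, $k\mapsto(0,k)$, whose cokernel $\tiM/\bZ(0,1)\cong M$ is free (hence torsion-free); combined with the tautological \'etaleness of a toric variety over its own chart, this gives log smoothness. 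But this is a side remark anyway, since your main chain relies on Lemmas~\ref{lem:smooth1} and \ref{lem:smooth2} being already established, not on reproving log smoothness over $\bA^1$.
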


\begin{remark}
We do not include the proofs of Lemmas \ref{lem:smooth1}, \ref{lem:smooth2}, and \ref{lem:smooth3} but note that they follow directly from K. Kato's toroidal characterization of log smoothness, see \cite[Theorem 4.1]{Ka96}.
\end{remark}

\subsection{The Kato-Nakayama space}

\begin{definition}
\label{def:Kato-Nakayama}
Let $W^\dag = (W,\cM_X,\alpha_X)$ be a log space.  Suppose $W^\dag$ is coherent in the sense of Definition \ref{def:coherent}.  The \emph{Kato-Nakayama space} is the space $W_{\log}$ whose underlying point set is
\[
W_\log = \{(x,h)\,\vert\,x\in W, h\in\Hom(\shM^\gp_{W,x},S^1), \text{ and }h(f)=\frac{f(x)}{|f(x)|}\hbox{ for any }f\in\shO^\times_{W,x}\}.
\]
topologized such that whenever $U \subset W$ is an open set and $P \to \cO_U$ is a chart, the embedding
\[
U_{\log} \hookrightarrow U \times \Hom(P^\gp,S^1) \qquad (x,h) \mapsto (x,h\vert_P)
\]
is a homeomorphism onto its image.  Let $\rho = \rho_{W^{\dagger}}$ denote the map $W_{\log} \to W$ given by $\rho(x,h) = x$.
\end{definition}

\begin{remark}
\label{rem:KNnoncoherent}
The above definition also makes sense when the log structure is not coherent, see \cite{NO10}. 
The point set definition is the same and the topology is the weak topology with respect to the functions $\rho$ and $(x, h) \ra h(m)$ for $m$ a
local section of $\shM_W$.
We will need this more general definition in Section~\ref{sec:five}.
\end{remark}

The map $\rho$ is continuous and surjective, and the construction $W^{\dagger} \mapsto W_{\log}$ is functorial such that for a morphism $W_1^{\dag} \to W_2^{\dag}$, the induced map $W_{1,\log} \to W_{2,\log}$ is continuous.

\begin{remark}
Define the \emph{K-N log point} $\Spec\CC_{KN}^\dag$ to be the analytic space $\Spec\CC$ with the log structure given by $\shM_{\Spec\CC}=\RR_{\ge 0}\times S^1$ and $\alpha_{\Spec\CC}:(r,h)\mapsto rh$.  Then there is a natural identification of sets
\[
W_{\log} = \Mor(\Spec\CC_{KN}^\dag,W^\dag)
\]
\end{remark}

\begin{example}
If $X$ carries the trivial log structure, then $X_\log = X$.
\end{example}

\begin{example}
\label{ex:exceptional}
Consider the affine line $(\AA^1,\cM_{\bA^1})$ of Example \ref{ex:logaffineline} and the standard log point $\Spec \bC^{\dag}$ of Example \ref{ex:logpoint}.  Then $(\AA^1)_{\log}$ is homeomorphic to $\bR_{\geq 0} \times S^1$, $(\Spec \bC^{\dag})_\log$ is homeomorphic to $S^1$, and the map $\rho:\bR_{\geq 0} \times S^1 \to \bC$ is given by $(r,e^{i\theta}) \mapsto re^{i\theta}$.  In other words the map $(\AA^1)_\log\ra \AA^1$ is the real oriented blowup of the origin and $(\Spec \bC^{\dagger})_{\log}$ is the ``exceptional circle'' of this blowup.  
\end{example}

In general when $W$ is a toric variety and is furnished with the log structure of Section \ref{subsubsec:logtoric}, the space $W_{\log}$ can be described in the manner of Proposition \ref{prop:specpoints}.

\begin{lemma} 
\label{lem:logtoric}
Let $P$ be a fine, saturated monoid and furnish $W=\Spec\CC[P]$ with the log structure given by the natural chart $P\ra \shO_W$.   Then $W_{\log}$ is naturally identified with $\Hom(P,\RR_{\geq 0} \times S^1)$.  Moreover, under the identification $W \cong \Hom(P,\bC)$ of Proposition \ref{prop:specpoints}, the map $\rho:W_{\log} \to W$ is given by composing with the monoid epimorphism $\RR_{\ge0}\times S^1\ra\CC$.
\end{lemma}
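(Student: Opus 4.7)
The plan is to unfold the definition of $W_{\log}$ using the global chart $P \to \cO_W$ and repackage the data $(x,h)$ into a single monoid homomorphism $\varphi : P \to \bR_{\geq 0} \times S^1$. By Definition~\ref{def:Kato-Nakayama}, since $P \to \cO_W$ is a (global) chart, $W_{\log}$ embeds as a subspace of $W \times \Hom(P^{\gp}, S^1)$ via $(x,h) \mapsto (x, h|_{P^{\gp}})$, so it suffices to identify the image and verify that the bijection with $\Hom(P,\bR_{\geq 0} \times S^1)$ is a homeomorphism.

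First I would use Proposition~\ref{prop:specpoints} to identify a point $x \in W$ with the monoid homomorphism $\mathrm{ev}_x \colon P \to \bC$, $p \mapsto z^p(x)$. Given a pair $(x,h) \in W \times \Hom(P^{\gp}, S^1)$, define
\[
\varphi_{x,h} \colon P \to \bR_{\geq 0} \times S^1, \qquad \varphi_{x,h}(p) := \bigl(|\mathrm{ev}_x(p)|,\, h(p)\bigr).
\]
This is a monoid homomorphism because $|\cdot| \colon \bC \to \bR_{\geq 0}$ and $h$ are. The central observation — and the only part that is not pure bookkeeping — is that the Kato-Nakayama compatibility condition $h(f) = f(x)/|f(x)|$ for $f \in \cO^\times_{W,x}$, applied in particular to the monomials $f = z^p$ with $z^p(x) \neq 0$, is equivalent to the statement that the composition of $\varphi_{x,h}$ with the monoid epimorphism $\mu \colon \bR_{\geq 0} \times S^1 \to \bC$, $(r,u) \mapsto ru$, equals $\mathrm{ev}_x$. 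When $z^p(x) = 0$, both sides of the purported identity $\mu \circ \varphi_{x,h}(p) = \mathrm{ev}_x(p)$ vanish automatically, so no compatibility is imposed on $h(p)$. This simultaneously identifies the image of $W_{\log}$ and proves the claim about $\rho$, since by construction $\rho(x,h) = x = \mu \circ \varphi_{x,h}$.

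Conversely, any $\varphi \in \Hom(P, \bR_{\geq 0} \times S^1)$ gives back such a pair via $\mathrm{ev}_x = \mu \circ \varphi$ and $h = \pi_{S^1} \circ \varphi$; here $h$ extends uniquely from $P$ to $P^{\gp}$ because $P$ is integral and $S^1$ is a group. This yields the asserted bijection.

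For the topological claim, I would fix generators $p_1, \ldots, p_k$ of $P$ (available since $P$ is fine, hence finitely generated). Then $W \hookrightarrow \bC^k$ via $x \mapsto (z^{p_i}(x))_i$, $\Hom(P^{\gp}, S^1) \hookrightarrow (S^1)^k$ via $h \mapsto (h(p_i))_i$, and $\Hom(P, \bR_{\geq 0} \times S^1) \hookrightarrow (\bR_{\geq 0} \times S^1)^k$ analogously, all with subspace topology. In these coordinates the bijection is simply $((z_i),(u_i)) \leftrightarrow ((|z_i|, u_i))$ on the locus cut out by $u_i = z_i/|z_i|$ whenever $z_i \neq 0$; both directions are manifestly continuous. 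The only mild obstacle anywhere in the argument is making sure the extension $P \to P^{\gp}$ does not introduce topological subtleties, which follows from $P$ being fine and saturated so that $P \hookrightarrow P^{\gp}$ is a closed embedding of finitely generated monoids.
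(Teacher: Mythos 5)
The paper's own ``proof'' is a one-line citation to Kato--Nakayama \cite{KN99}, Ex.\,(1.2.11), so there is no internal argument to compare against; your write-up supplies the content that the citation is delegating to the reference, and it is essentially correct. The heart of the matter is the stalk description of the associated log structure: $\shM^\gp_{W,x}$ is the pushout of $P^\gp$ and $\cO^\times_{W,x}$ along the subgroup generated by $\{p\in P : z^p(x)\neq 0\}$, so a homomorphism $h\colon\shM^\gp_{W,x}\to S^1$ satisfying the Kato--Nakayama constraint on $\cO^\times_{W,x}$ is the same as a homomorphism $h_1\colon P^\gp\to S^1$ obeying $h_1(p)=z^p(x)/|z^p(x)|$ whenever $z^p(x)\neq 0$. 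You compress this into the sentence about ``applying the compatibility in particular to the monomials,'' which is fine for the forward inclusion, but the converse (that any $\varphi\in\Hom(P,\bR_{\geq 0}\times S^1)$ actually yields a well-defined $h$ on $\shM^\gp_{W,x}$, not merely on $P^\gp$) implicitly uses this pushout description; making that step explicit would tighten the argument. The topological part is correct: both $W_\log$ (in the chart topology) and $\Hom(P,\bR_{\geq 0}\times S^1)$ embed in $(\bC\times S^1)^k$ resp.\ $(\bR_{\geq 0}\times S^1)^k$ via the finitely many generators, and the map $(z_i,u_i)\leftrightarrow(|z_i|,u_i)$ and its inverse $(r_i,u_i)\mapsto(r_iu_i,u_i)$ are continuous.

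One small remark: you only use that $P$ is fine (finitely generated for the coordinate description, integral for the universal property of $P\hookrightarrow P^\gp$); saturation plays no role in your argument. Also, the closing sentence about $P\hookrightarrow P^\gp$ being a ``closed embedding of finitely generated monoids'' is an odd phrase to reach for---what you actually need is simply that $\Hom(P^\gp,S^1)\to\Hom(P,S^1)$ is a homeomorphism (bijective by integrality and groupification, and a homeomorphism because both are cut out inside $(S^1)^k$ by the same relations among the generators). These are cosmetic points; the structure of the proof is sound and matches what the cited reference does.
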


\begin{proof}
This is \cite{KN99},\,Ex.(1.2.11):
\end{proof}

From the description of Definition \ref{def:Kato-Nakayama} we obtain the following.
\begin{lemma} 
\label{lem:logpullback}
Let $(W_1,\shM_{W_1})$ be a log space, let $W_2 \to W_1$ be a morphism of complex analytic spaces and let $\cM_{W_2}$ be the pullback log structure on $W_2$ of Definition \ref{def:logpullback}.  The following diagram of topological spaces is Cartesian
$$
\xymatrix@C=30pt
{ W_{2,\log}\ar[r]\ar[d]&W_2\ar[d]\\
  W_{1,\log}\ar[r]&W_1\\
}
$$
\end{lemma}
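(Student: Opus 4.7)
The plan is to verify the Cartesian property first on the level of sets, using the description of points in Definition \ref{def:Kato-Nakayama}, and then upgrade to a homeomorphism using the chart-based definition of the topology.

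For the point-set argument, I would fix $x \in W_2$ mapping to $y \in W_1$ and describe the fiber of $W_{2,\log} \to W_2$ over $x$. By the construction of the associated log structure (as recalled just after Definition of log structure), the stalk $\shM_{W_2,x}$ is obtained from the direct sum $u^{-1}\shM_{W_1,y} \oplus \shO_{W_2,x}^\times$ by identifying $(m, \alpha_{W_1}(m)^{-1})$ with zero for $m \in \alpha_{W_1}^{-1}\shO_{W_1,y}^\times$. Passing to Grothendieck groups turns this into a pushout
\[
\shM_{W_2,x}^\gp \;\cong\; \shM_{W_1,y}^\gp \oplus_{\shO_{W_1,y}^\times} \shO_{W_2,x}^\times .
\]
Consequently a homomorphism $h_2 : \shM_{W_2,x}^\gp \to S^1$ is the same thing as a pair $(h_1, k)$ with $h_1 : \shM_{W_1,y}^\gp \to S^1$ and $k : \shO_{W_2,x}^\times \to S^1$ agreeing on $\shO_{W_1,y}^\times$. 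The Kato--Nakayama condition $h_2(f) = f(x)/|f(x)|$ for $f \in \shO_{W_2,x}^\times$ forces $k$ to be the argument map, and then the compatibility on $\shO_{W_1,y}^\times$ amounts exactly to $h_1(g) = g(y)/|g(y)|$ for $g \in \shO_{W_1,y}^\times$. Thus $(x, h_2) \mapsto (x, (y, h_1))$ is a bijection between $W_{2,\log}$ and $W_2 \times_{W_1} W_{1,\log}$.

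To promote this bijection to a homeomorphism, I would work locally using charts. Shrinking to an open $U_1 \subseteq W_1$ admitting a chart $P \to \shO_{U_1}$, the pullback log structure on any open $V \subseteq W_2$ mapping into $U_1$ is, by Definition \ref{def:logpullback}, the one associated to the pre-log structure $P \to \shO_V$ obtained by composition. By Definition \ref{def:Kato-Nakayama}, $U_{1,\log} \hookrightarrow U_1 \times \Hom(P^\gp, S^1)$ and $V_{\log} \hookrightarrow V \times \Hom(P^\gp, S^1)$ are both topological embeddings onto the loci cut out by the condition that the $\Hom(P^\gp,S^1)$-component agrees with the argument on $P \cap \shO^\times$. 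Since both embeddings use the \emph{same} chart $P$, the square
\[
\xymatrix{
V_{\log} \ar[r] \ar[d] & V \ar[d] \\
U_{1,\log} \ar[r] & U_1
}
\]
is immediately seen to be Cartesian in $\mathrm{Top}$: the projection $V_{\log} \to V \times_{U_1} U_{1,\log}$ is the evident homeomorphism onto its image, and the set-theoretic bijection of the first paragraph shows it is a bijection.

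The only real subtlety is that the bijection in the first paragraph and the chart-based homeomorphism in the second paragraph genuinely coincide; this is a direct unwinding of the associated-log-structure construction, which I expect to be the most tedious step though not a conceptual obstacle. The noncoherent case is covered by Remark \ref{rem:KNnoncoherent} since the weak topology generated by the functions $(x,h) \mapsto h(m)$ for $m$ a local section of $\shM$ pulls back correctly under the pushout description of $\shM_{W_2}$, so the same Cartesian diagram persists without assuming a global chart.
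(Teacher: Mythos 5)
Your proof is correct and fills in what the paper leaves as an unstated consequence of the definitions (the paper gives no proof, only ``From the description of Definition 4.8 we obtain the following''). Your two-step strategy --- a point-set bijection via the pushout description of the associated log structure, then a chart-based argument for the topology, with a remark for the non-coherent case --- is the natural way to make this precise, and every step goes through.

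One small point deserves to be made explicit. In your pushout formula for $\shM_{W_2,x}$ you quotient by pairs $(m,\alpha_{W_1}(m)^{-1})$ for $m$ in $\alpha_{W_1}^{-1}\shO_{W_1,y}^\times$, but the paper's definition of the associated log structure quotients by the preimage of $\shO_{W_2,x}^\times$ under the \emph{composite} pre-log map $u^{-1}\shM_{W_1,y} \to u^{-1}\shO_{W_1,y} \to \shO_{W_2,x}$. In general a ring homomorphism can create new units, so a priori this second set could be strictly larger. Here it is not: for a local ring $\shO_{W_1,y}$ the element $\alpha_{W_1}(m)$ is a unit iff $\alpha_{W_1}(m)(y)\neq 0$, and $(u^\#\alpha_{W_1}(m))(x)=\alpha_{W_1}(m)(u(x))=\alpha_{W_1}(m)(y)$, so no new units appear under pullback. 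This observation is exactly what makes the bijection on fibers work (otherwise the Kato--Nakayama condition on the $W_2$ side would impose extra constraints and the map to the fiber product would fail to be surjective). It is worth writing this line down, since the statement is being used for the pullback log structure on a hypersurface --- a closed immersion, where new units are the thing one might naively worry about.

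Your chart-based homeomorphism argument and the remark for the non-coherent case are both fine; the latter is genuinely needed, since Section 5 of the paper applies this lemma to the relatively coherent but non-coherent log structures $\shF_{\overline Y}$, $\shF_{\overline X}$.
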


\subsubsection{$Y_{\log}$ and $Y_{0,\log}$}
\label{subsubsec:Ylog}
In this Section and in Section \ref{subsubsec:Xlog}, we return to the degeneration of our hypersurface.  Here we describe $Y_{0,\log}$, the map $\rho:Y_{0,\log} \to Y_0$, and its fibers.

Let $\Gamma_{\geq h} \subset \tiM_{\bR}$ be the overgraph cone of Section \ref{subsubsec:Gamma}.  From Remark \ref{rem:specpoints}, we can describe $Y$ and $Y_0$ as spaces of monoid homomorphisms
\[
\begin{array}{rcl}
Y & = & \Hom(\tiM \cap \Gamma_{\geq h}, \bC) \\
Y_0 & = & \{\phi \in \Hom(\tiM \cap \Gamma_{\geq h},\bC) \mid \phi(0,1) = 0\}
\end{array}
\]
Then by Lemmas \ref{lem:logtoric} and \ref{lem:logpullback}
\[
\begin{array}{rcl}
Y_\log & = & \Hom(\tiM \cap \Gamma_{\geq h},\bR_{\geq 0} \times S^1) \\
Y_{0,\log} & = & \{\phi \in \Hom(\tiM \cap \Gamma_{\geq h}, \bR_{\geq 0} \times S^1) \mid \phi(0,1) \in \{0\} \times S^1\}
\end{array}
\]
\begin{enumerate}
\item $Y_{\log}$ is the space of monoid homomorphisms $\Hom(\tiM \cap \Gamma_{\geq h},\bR_{\geq 0} \times S^1)$
\item $Y_{0,\log}$ is the inverse image of $Y_0$ under the map $Y_{\log} \to Y$.  In other words, it is the space of monoid homomorphisms $\phi:\tiM \cap \Gamma_{\geq h} \to \bR_{\geq 0} \times S^1$ carrying $(0,1)$ to an element of $\{0\} \times S^1$.
\end{enumerate}

\subsubsection{$X_{\log}$ and $X_{0,\log}$}
\label{subsubsec:Xlog}

Lemmas \ref{lem:logtoric} and \ref{lem:logpullback} provide the following description of $ X_\log$ and $X_{0,\log}$.  Let $\Gamma_{\geq h} \subset \tiM_{\bR}$ be the overgraph cone of Section \ref{subsubsec:Gamma}.  Then 
\begin{enumerate}
\item
$X_\log$  is the inverse image of $X \subset Y$ under the map 
\[
\Hom(\tiM \cap \Gamma_{\geq h},\RR_{\ge 0}\times S^1) \to \Hom(\tiM \cap \Gamma_{\geq h},\bC)
\]
induced by $\bR_{\geq 0} \times S^1 \to \bC$.
\item $X_{0,\log}$ is the inverse image of $X_0 \subset Y$ under the map
\[
\Hom(\tiM \cap \Gamma_{\geq h},\bR_{\geq 0} \times S^1) \to \Hom(\tiM \cap \Gamma_{\geq h},\bC)
\]
induced by $\bR_{\geq 0} \times S^1 \to \bC$.
\end{enumerate}

The map $t^{\dagger}:(X,\cM_X) \to (\bA^1,\cM_{\bA^1})$ of Lemma \ref{lem:smooth2} induces a map $X_{\log} \to (\bA^1)_{\log}$.  For $c \in \bA^1_{\log} \cong \bR_{\geq 0} \times S^1$, let $(X_{\log})_c$ denote the fiber of this map above $c$.  

The map $\pi_0^{\dagger}$ of Lemma \ref{lem:smooth3} induces a map $X_{0,\log} \to \Spec \bC^{\dagger}$.  For $e^{i\theta} \in S^1 \cong \Spec \bC^{\dagger}$, denote the fiber of $\pi^{\dagger}_0$ by $(X_{0,\log})_\theta$.  Then
\begin{enumerate}
\item[(3)] $(X_{0,\log})_{\theta}$ is subset of $X_{0,\log}$ given by maps $\phi:\tiM \cap \Gamma_{\geq h} \to \bR_{\geq 0} \times S^1$ that carry $(0,1) \in \Gamma_{\geq h}$ to $(0,e^{i\theta}) \in \bR_{\geq 0} \times S^1$.
\end{enumerate}

The restriction of the map $\rho:X_{0,\log} \to X_0$ to any $(X_{0,\log})_{\theta}$ is surjective.

The map ${\overline\pi_Y}^\dagger$ (resp. $\overline{\pi}^\dagger$, $\overline{\pi}_0^\dagger$) induces a map of Kato-Nakayama spaces 
\[
(\overline\pi_Y)_\log:\overline{Y}_\log\ra\AA^1_\log\qquad 
(\hbox{resp. } \overline{\pi}_\log:\overline{ X }_\log\ra\AA^1_\log, \quad \overline{\pi}_{0,\log}:\overline{ X }_{0,\log}\ra 0_\log).
\]
Note that $\overline X_{0,\log}$ maps to $S^1=\Hom(\bZ_{\geq 0},S^1)=(\Spec\CC^\dagger)_\log$. 
We denote the restrictions of these maps to the loci inside $Y\subset \overline Y$
by removing the bar, e.g., $\pi_\log: X_\log\ra \AA^1_\log$.
For $c\in\AA^1_\log$ (resp. $0_\log$), we denote $(\overline X_\log)_c:=\overline\pi_{\log}^{-1}(c)$, $( X_{0,\log})_c:=\pi_{0,\log}^{-1}(c)$, etc. 
The fiber of $\pi_{0,\log}$ over $\theta\in S^1$ is given by 
$$( X_{0,\log})_\theta=
\{\phi\in\Hom(\tiM\cap \Gamma_{\ge0},\RR_{\ge 0}\times S^1)\,|\,\phi(0,1)=(0,\theta)\}\times_{Y} { X_0}$$
which surjects to $ X_0$ under $\rho$ for each $\theta$.

\subsection{Embedding the skeleton into the Kato-Nakayama space}

In this section we construct an embedding of the skeleton $S_{\sympol,\cT} \subset \PSp \times \Hom(M,S^1)$ (Definition \ref{def:skeleton}) into the fiber over $1$ of the Kato-Nakayama space of the degeneration $(X_{0,\log})_{1}$ (Section \ref{subsubsec:Xlog}).  We will first define a map
\[
\lambda:\PSp \times \Hom(M,S^1) \to Y_{\log}
\]
and then show that $\lambda$ restricts to an embedding $S_{\sympol,\cT}\hookrightarrow (X_{0,\log})_1$.  We use the description of $Y_{\log}$ given in Section \ref{subsubsec:Ylog}, i.e.
\begin{eqnarray*}
Y_{\log} & = & \Hom(\tiM \cap \Gamma_{\geq h},\bR_{\geq 0} \times S^1) \\
& = & \Hom(\tiM \cap \Gamma_{\geq h},\bR_{\geq 0}) \times \Hom(\tiM \cap \Gamma_{\geq h},S^1).
\end{eqnarray*}

\begin{definition}
\label{def:lambda}
Let $j:\WPSp \hookrightarrow X_{0}$ be the embedding of Proposition \ref{propembeddingIb}, and let us regard $\PSp$ as a subset of $\WPSp$ by the embedding of Remark \ref{rem:section}.  Define $\lambda:\PSp \times \Hom(M,S^1) \to Y_\log$ by the formula
\[
\lambda(x,\phi)(m,r) = (j(x)(m,r),\phi(m)) \in \bR_{\geq 0} \times S^1
\]
\end{definition}

\begin{remark}
\label{rem:jx}
In the Definition, we are regarding $j(x)$ as a homomorphism $\tiM \cap \Gamma_{\geq h} \to \bC$ in the manner of Proposition \ref{prop:specpoints}.  Proposition \ref{propembeddingIa} shows that $j_{\tau,1}$ maps $\tau$ homeomorphically onto $(X_{0,\tau})_{\geq 0}$ so in fact $j(x)$ is a homomorphism $\tiM \cap \Gamma_{\geq h} \to \bR_{\geq 0}$.  The monoid homomorphism $j(x)$ has a messy explicit formula.  It is given implicitly by the following rules
\begin{itemize}
\item $j(x)(m,r) = 0$ unless $r = h(m)$ and $m \in \bR_{\geq 0} \tau_x$
\item For $m \in \tau_x^{[0]}$, the values $j(x)(m,h(m))$ are the unique positive real solutions to the following system of equations:
\begin{eqnarray}
\sum_{m \in \tau^{[0]}} (j(x)(m,h(m)))^2 m & = & x \sum_{m \in \tau^{[0]}} (j(x)(m,h(m)))^2 \\
\sum_{m \in \tau^{[0]}} a_m j(x)(m,h(m)) & = & -a_0
\end{eqnarray}
The first equation comes from Proposition \ref{prop:moment} and the second ensures $j(x)\in X_{0,\tau_x}$.
\end{itemize}
\end{remark}

\subsubsection{Properties of the embedding $\lambda$}

\begin{proposition}
The image of $\PSp \times \Hom(M,S^1)$ under $\lambda$ is contained in $(Y_{0,\log})_1$.  The image of $S_{\sympol,\cT}$ under $\lambda$ is contained in $X_{0,\log}$.
\end{proposition}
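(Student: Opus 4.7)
The plan is to verify both inclusions directly from the explicit descriptions of $Y_{0,\log}$ and $X_{0,\log}$ given in \S\ref{subsubsec:Ylog}--\ref{subsubsec:Xlog}, combined with the implicit characterization of $j(x)$ in Remark~\ref{rem:jx}.

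First, I would check that $\lambda(x,\phi)$ actually defines an element of $Y_\log = \Hom(\tiM \cap \Gamma_{\geq h}, \bR_{\geq 0}\times S^1)$. Its first coordinate is $j(x)$, and because the embedding $\PSp \hookrightarrow \WPSp$ is the one with trivial character $d=1$ (Remark~\ref{rem:section}), $j(x)$ lands in $(X_{0,\tau_x})_{\geq 0}$; by Example~\ref{ex:Wgeq0} this is exactly the statement that $j(x)$ is an $\bR_{\geq 0}$-valued monoid homomorphism. The second coordinate factors as $(m,r)\mapsto m\mapsto \phi(m)$, manifestly a monoid homomorphism to $S^1$. Evaluating at $(0,1)$: the first coordinate vanishes by the first bullet of Remark~\ref{rem:jx} (since $1 \neq h(0) = 0$), and the second coordinate is $\phi(0) = 1$. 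Hence $\lambda(x,\phi)(0,1) = (0,1)$, placing $\lambda(x,\phi)$ in $(Y_{0,\log})_1$.

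For the second claim, since $X_{0,\log}$ is the preimage of $X_0 \subset Y$ under $\rho$, it suffices to show $\tilde f\bigl(\rho(\lambda(x,\phi))\bigr) = 0$ whenever $(x,\phi) \in S_{\sympol,\cT}$. Expanding,
\[
\tilde f\bigl(\rho(\lambda(x,\phi))\bigr) \;=\; a_0 + \sum_{m \in \cT^{[0]}} a_m\, j(x)(m,h(m))\, \phi(m).
\]
The key combinatorial input is that any vertex $m \in \cT^{[0]}$ contained in $\bR_{\geq 0}\tau_x$ must itself be a vertex of $\tau_x$: the cones $\{\bR_{\geq 0}\tau:\tau\in\cT\}$ form a fan by the star-shapedness of $\cT_\sympol$ at $0$, so $\bR_{\geq 0}\tau_x \cap \PSp = \tau_x$, and every vertex of $\cT$ lies in $\PSp$. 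Combined with the first bullet of Remark~\ref{rem:jx}, which forces $j(x)(m,h(m)) = 0$ for $m \notin \bR_{\geq 0}\tau_x$, and the defining condition $\phi\vert_{\tau_x^{[0]}} \equiv 1$ of $S_{\sympol,\cT}$, the sum collapses to $a_0 + \sum_{m \in \tau_x^{[0]}} a_m\, j(x)(m,h(m))$. Under the identification $(m,h(m))\leftrightarrow m$ from Lemma~\ref{lem:Gammahtau}, this is exactly $f_{\tau_x}(j(x))$, which vanishes because $j(x) \in X_{0,\tau_x} = V(f_{\tau_x})$.

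The main obstacle I anticipate is isolating the combinatorial point above --- that vertices of $\cT$ lying in the cone $\bR_{\geq 0}\tau_x$ are exactly the vertices of $\tau_x$. Once this is pinned down, both inclusions follow by a direct unwinding of the implicit description of $j(x)$ in Remark~\ref{rem:jx} (the moment-map equation together with $f_{\tau_x}(j(x)) = 0$) against the definitions of $S_{\sympol,\cT}$ and $\tilde f$.
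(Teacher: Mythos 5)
Your proposal is correct and follows essentially the same route as the paper's proof: show $\lambda(x,\phi)(0,1)=(0,1)$ to land in $(Y_{0,\log})_1$, then verify the Fermat-type equation by killing all terms outside $\tau_x^{[0]}$ via Remark~\ref{rem:jx} and invoking the defining equation of $X_{0,\tau_x}$. The one thing you do more carefully than the paper is to spell out the combinatorial bridge (vertices of $\cT$ lying in $\bR_{\geq 0}\tau_x$ are exactly $\tau_x^{[0]}$, because the cones over simplices of $\cT$ form a fan and $\bR_{\geq 0}\tau_x\cap\PSp=\tau_x$), a point the paper's proof asserts without comment when it passes from ``$m\in\bR_{\geq 0}\tau_x$'' to ``$m\in\tau_x^{[0]}$.''
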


\begin{proof}
By Definition \ref{def:lambda} and Remark \ref{rem:jx}, for any $x \in \PSp$ the homomorphism $j(x)$ carries $(0,1) \in \Gamma_{\geq h}$ to $0 \in \bR_{\geq 0}$.  From Definition \ref{def:lambda}, the homomorphism $\lambda(x)$ carries $(0,1)$ to $(0,1) \in \bR_{\geq 0} \times S^1$.  It follows that $\lambda(\PSp \times \Hom(M,S^1)) \subset (Y_{0,\log})_1$.  

The map $\lambda(x,\phi):\tiM \cap \Gamma_{\geq h} \to \bR_{\geq 0} \times S^1$ belongs to $(X_{0,\log})_1 \subset (Y_{0,\log})_1$ if and only if
\begin{equation}
\label{eq:summincT}
\sum_{m \in \cT^{[0]}} a_m j(x)(m,h(m))\cdot\phi(m) = -a_0
\end{equation}
Since (Remark \ref{rem:jx}) $j(x)(m,h(m)) = 0$ unless $m$ belongs to $\tau_x$, the left hand side of \eqref{eq:summincT} is
\[
\sum_{m \in \tau_x^{[0]}} a_m j(x)(m,h(m)) \phi(m)
\]
If $(x,\phi)$ belongs to $S_{\sympol,\cT}$, then $\phi(m) = 1$ for every $m \in \tau_x^{[0]}$, so that this is equal to $-a_0$ by Remark \ref{rem:jx}.

\end{proof}

\begin{proposition}
\label{prop:coset}
For each $d \in D_{\tau}$, let $dA_\tau \subset G_\tau$ denote the corresponding coset of $A_\tau$ (see Eq.~\ref{groupexseq}).  Let $r:\Hom(\tiM,S^1) \to \Hom(M,S^1)$ denote the restriction map induced by the inclusion $m \mapsto (m,0)$.  Then $r$ induces an isomorphism 
the set of homomorphisms $\psi:\tiM \to S^1$ obeying the conditions
\begin{enumerate}
\item $\psi(m,h(m)) = d(m)$ for $m \in \tau$
\item $\psi(0,1) = 1$
\end{enumerate}
to a coset of $A_\tau$.
\end{proposition}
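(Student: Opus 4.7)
My plan is to decompose the problem into injectivity and surjectivity and to use the splitting $\tiM = M\oplus\bZ$ induced by the inclusion $m\mapsto(m,0)$ and the projection onto the second factor.

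First I would unpack the two conditions on $\psi$. Since $\psi$ is a group homomorphism and $h$ takes integral values on $M$, condition (2) gives $\psi(0,h(m)) = \psi(0,1)^{h(m)} = 1$ for each $m\in M\cap\bR\tau$. Therefore
\[
\psi(m,h(m)) = \psi(m,0)\cdot\psi(0,h(m)) = \psi(m,0) = r(\psi)(m),
\]
so the two conditions are jointly equivalent to
\begin{enumerate}
\item[(1$'$)] $r(\psi)(m) = d(m)$ for all $m\in M\cap\bR\tau$,
\item[(2$'$)] $\psi(0,1) = 1$.
\end{enumerate}

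Next I would verify that (1$'$) forces $r(\psi)$ into the coset $dA_\tau$. Indeed $d$ vanishes on $\bZ\tau^{[0]}$ by the definition of $D_\tau$, so (1$'$) gives $r(\psi)(v)=1$ for every vertex $v$ of $\tau$, i.e.\ $r(\psi)\in G_\tau$. The short exact sequence \eqref{groupexseq} identifies $D_\tau$ with the restriction $G_\tau\to\Hom((M\cap\bR\tau)/\bZ\tau^{[0]},S^1)$, and (1$'$) says exactly that this restriction of $r(\psi)$ equals $d$. Hence $r(\psi)\in dA_\tau$.

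Injectivity is then immediate: any $\psi:\tiM\to S^1$ satisfying (2$'$) is determined by its restriction to $M$ via $\psi(m,k)=r(\psi)(m)\cdot\psi(0,1)^k=r(\psi)(m)$, so $\psi\mapsto r(\psi)$ is injective on the solution set. For surjectivity, given any $\phi\in dA_\tau$ I would define $\psi:\tiM\to S^1$ by $\psi(m,k):=\phi(m)$; this is a homomorphism with $\psi(0,1)=\phi(0)=1$, and since $\phi$ maps to $d$ in $D_\tau$, i.e.\ $\phi|_{M\cap\bR\tau}=d$, we recover $\psi(m,h(m))=\phi(m)=d(m)$ for $m\in M\cap\bR\tau$. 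Clearly $r(\psi)=\phi$.

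There is essentially no hard step here; the only subtle point is the observation that integrality of $h$ on $M$ combined with (2) collapses condition (1) onto the sublattice $M\subset\tiM$, which is what makes the restriction map $r$ a bijection onto the whole coset rather than a proper subset of it. Once that is in hand, the two directions are a direct computation using the splitting of $\tiM$ and the definition of the quotient $G_\tau\twoheadrightarrow D_\tau$.
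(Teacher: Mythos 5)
Your proposal is correct and takes essentially the same route as the paper's (very terse) proof, which just notes that condition (2) makes $\psi$ recoverable from $r(\psi)$ and that two solutions differ by an element of $A_\tau$. You spell out the implicit steps: using integrality of $h$ on $M$ to reduce $\psi(m,h(m))$ to $r(\psi)(m)$, identifying the target coset as precisely $dA_\tau$ via the quotient $G_\tau \twoheadrightarrow D_\tau$, and checking surjectivity onto the whole coset via the splitting $\tiM \cong M\oplus\bZ$ — none of which the paper makes explicit, but all of which are the intended content.
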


\begin{proof}
Because of the second condition, $\psi$ is determined by its values
on $M\cong M\times\{0\} \subset \tiM.$
If $\psi$ and $\psi'$ obey both conditions, then $\psi/\psi' = 1$ on $\tau,$ which
characterizes $A_\tau.$
\end{proof}

\begin{theorem}
\label{thm:cartesian}
Let $j$ be the map of Proposition \ref{propembeddingIb}, let $\lambda$ be the map of Definition \ref{def:lambda}, and let $\rho_1$ be the map of Definition \ref{def:Kato-Nakayama}.  Then the square
\[
\xymatrix{
S_{\sympol,\triang}\ar[r]^\lambda \ar[d] & ( X_{0,\log})_1\ar[d]^{\rho_1} \\
\WPSp\ar[r]_j  &  X_0
}
\]
is Cartesian.  In particular, 
$\lambda\vert_{S_{\sympol,\cT}}:S_{\sympol,\cT} \to (X_{0,\log})_1$ is a closed embedding.  
\end{theorem}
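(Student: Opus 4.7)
The plan has three ingredients: commutativity of the square, a set-theoretic bijection onto the fiber product, and a compactness argument to upgrade the bijection to a homeomorphism. The closed-embedding conclusion then comes for free, because $j : \WPSp \hookrightarrow X_0$ is already a closed embedding (its source is compact and its target Hausdorff), so $\lambda|_{S_{\sympol,\cT}}$ is the pullback of $j$ along $\rho_1$.

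Commutativity is a direct computation. The left vertical map factors $S_{\sympol,\cT} \twoheadrightarrow \WPSp$ through the quotient of Remark~\ref{rem:quotientofS} and sends $(x,\phi)$ to $(x,[\phi])$, where $[\phi] \in D_{\tau_x}$ is the class of $\phi \in G_{\tau_x}$; this makes sense because $(x,\phi) \in S_{\sympol,\cT}$ forces $\phi$ to vanish on the vertices of $\tau_x$. The bottom map then lands on $j_{\tau_x,[\phi]}(x) = [\phi] \cdot j_{\tau_x,1}(x)$. On the other route, $\rho_1 \circ \lambda(x,\phi)$ is the monoid homomorphism $(m,r) \mapsto j(x)(m,r)\,\phi(m)$, which by Remark~\ref{rem:jx} vanishes outside $\Gamma_{\geq h,\tau_x}$ and on that face equals $\phi(m)\, j_{\tau_x,1}(x)(m,h(m))$. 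The two homomorphisms agree because the action of $[\phi]$ on $Y_{0,\tau_x}$ is multiplication by $[\phi](m) = \phi(m)$ on each character $z^m$, once one notices that $\phi$ factors through $D_{\tau_x}$ on $M \cap \RR\tau_x$ by virtue of $\phi|_{\tau_x^{[0]}} = 1$.

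The bijection onto the fiber product is the heart of the argument and the step I expect to be the main obstacle, chiefly because it requires juggling the overgraph lattice $\tiM$ against $M$, the two factors of $\RR_{\geq 0} \times S^1$, and the stratification by $\tau \in \cT$. Given $((x,d),\psi) \in \WPSp \times_{X_0} (X_{0,\log})_1$, write $\psi = (r_\psi,s_\psi)$ as two monoid homomorphisms $\tiM \cap \Gamma_{\geq h} \to \RR_{\geq 0}$ and $\to S^1$ respectively. The equation $\rho_1(\psi) = j(x,d)$, combined with the positivity $j(x)(m,h(m)) > 0$ on $\Gamma_{\geq h,\tau_x}$ (valid because $x \in \tau_x^\circ$ lands in the open torus orbit under $\mu_{\tau_x}^{-1}$, per Proposition~\ref{propembeddingIa}), forces $r_\psi = j(x)$ identically and pins down $s_\psi(m,h(m)) = d(m)$ on $\Gamma_{\geq h,\tau_x}$. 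Together with the $(X_{0,\log})_1$-condition $s_\psi(0,1) = 1$, Proposition~\ref{prop:coset} identifies the remaining freedom with a coset of $A_{\tau_x}$ inside $\Hom(M,S^1)$, and $\phi := s_\psi|_M$ recovers a unique element of $G_{\tau_x}$ with class $d$. Automatically $\phi|_{\tau_x^{[0]}} = 1$, so $(x,\phi) \in S_{\sympol,\cT}$; this gives the set-theoretic inverse.

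For the topology, $S_{\sympol,\cT}$ is compact as a closed subset of $\PSp \times \Hom(M,S^1) \cong \PSp \times (S^1)^{n+1}$, the target $\WPSp \times_{X_0} (X_{0,\log})_1$ is Hausdorff as a subspace of a product of Hausdorff spaces, and the candidate map $(x,\phi) \mapsto ((x,[\phi]), \lambda(x,\phi))$ is continuous by construction from the continuous ingredients of Definition~\ref{def:lambda} and the quotient map of Remark~\ref{rem:quotientofS}. A continuous bijection from a compact space to a Hausdorff space is a homeomorphism, which proves the square is Cartesian. The closed-embedding statement $\lambda|_{S_{\sympol,\cT}} : S_{\sympol,\cT} \hookrightarrow (X_{0,\log})_1$ then follows by base change from the closed embedding $j$.
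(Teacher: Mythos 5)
Your proposal is correct and follows essentially the same route as the paper: commutativity via the explicit formula for $j_{\tau,d}$ and $\lambda$, identification of fibers via the positivity of $j(x)$ on $\Gamma_{\geq h,\tau_x}$ (which pins down the $\RR_{\geq 0}$-component and forces the $S^1$-component to equal $d$ there), and Proposition~\ref{prop:coset} to recognize the remaining freedom as a coset of $A_{\tau_x}$. Your additions — spelling out commutativity of the square and closing the topological argument with the compact-to-Hausdorff upgrade — are natural completions of what the paper leaves implicit rather than a different method.
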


\begin{remark} The above diagram can be used to define $S_{\sympol,\triang}$. Replacing $(X_{0,\log})_1$ by $(X_{0,\log})_\theta$, we obtain a skeleton $S_{\sympol,\triang,\theta}$ for any $\theta$.
In fact, one may replace $(X_{0,\log})_1$ by $X_{0,\log}$ in order to obtain the entire family of skeleta over $S^1$ by varying $\theta$. This gives the geometric realization of the monodromy operation of the family $X\ra\AA^1$ along a loop around $0\in\AA^1$.
\end{remark}

\begin{proof}[Proof of Theorem \ref{thm:cartesian}]
Fix $(x,d) \in \WPSp$.  Thus, $x \in \PSp$ and $d$ is a homomorphism $M \cap \bR_{\geq 0}{\tau_x} \to S^1$ carrying the vertices of $\tau_x$ to $1$.  If we regard $j(x)$ as a monoid homomorphism as in Remark \ref{rem:jx}, then $j(x,d)$ is the monoid homomorphism
\[
j(x,d)(m,k) = \bigg\{
\begin{array}{ll}
d(m) j(x)(m,k) & \text{if $k = h(m)$ and $m \in \bR_{\geq 0} \tau_x$} \\
0 & \text{otherwise}
\end{array}
\]
The fiber of the left vertical map above $(x,d)$ is a coset of $A_\tau$ in $G_\tau$.  We will show that $\lambda$ carries this homeomorphically onto the fiber of $\rho_1$ above $j(x,d)$.

Let $r:\tiM \cap \Gamma_{\geq h} \to \bR_{\geq 0}$ and $\psi:\tiM \cap \Gamma_{\geq h} \to S^1$ be the components of a point $(r,\psi) \in (X_{0,\log})_1 \subset Y_{\log}$.  

{\it Claim:}  The point $(r,\psi)$ belongs to $\rho_1^{-1}(j(x,d))$ if and only if
\begin{itemize}
\item $r(m,k) = 0$ unless $k = h(m)$ and $m \in \bR_{\geq 0} \tau_x$
\item $r(m,h(m)) = j(x)(m,h(m))$ when $m \in \bR_{\geq 0} \tau_x$
\item $\psi(m,h(m)) = d(m)$ when $m \in \tau_x$
and (because we have restricted $\rho$ to $(X_{0,\log})_1$) $\psi(0,1) = 1$.
\end{itemize}

Because of the claim, the fiber $\rho_1^{-1}(j(x,d))$ is naturally parameterized by the set of homomorphisms $\psi:\tiM \to S^1$ that obey the third condition on this list, which is a coset of $A_{\tau}$ in $G_\tau$ by Proposition \ref{prop:coset}.
This agrees with the preimage in $S_{\sympol,\cT}$ of $(x,d)$ under $\lambda.$

To prove the claim, note that by the definition of $j$, we have $\rho_1(r,\psi) = j(x,d)$ if and only if the following holds: for all $(m,k) \in \tiM \cap \Gamma_{\geq h}$,
\[
r(m,k)\psi(m,k) = \bigg\{
\begin{array}{ll}
d(m) j(x)(m,k) & \text{if $k= h(m)$ and $m \in \bR_{\geq 0} \tau$} \\
0 & \text{otherwise}
\end{array}
\]
In particular we must have $r(m,k) = 0$ unless $k = h(m)$ and $m \in \bR_{\geq 0} \tau_x$.  In this case
\[
r(m,h(m)) = \psi(m,h(m))^{-1}d(m) j(x)(m,h(m))
\]
Since $r(m,h(m))$ and $j(x)(m,k)$ are positive real numbers, we must have $\psi(m,h(m)) = d(m)$.

\end{proof}

If $\dim \tau_x = n$, then $\rho_1^{-1}(j(x,d))=A_{\tau_x}= 1$ by the above proof, so we see that we have the following corollary.

\begin{corollary}
\label{cor:localiso}
Let $(x,d)\in \WPSp$ and suppose $\dim(\tau_x)=n.$  Then, in
a neighborhood of $j(x,d),$ $X_0$
is smooth and  
$\rho_1$ is an isomorphism.
\end{corollary}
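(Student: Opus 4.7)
The plan is to prove two facts separately: that $X_0$ is smooth in a neighborhood of $y := j(x,d)$, and that $\rho_1$ restricts to a local homeomorphism there. The fiber computation already made in the proof of Theorem \ref{thm:cartesian} shows $\rho_1^{-1}(y) = A_{\tau_x}$, which is a compact torus of dimension $n - \dim(\tau_x) = 0$ and hence a single point, so what remains is upgrading this pointwise statement to a neighborhood statement.

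For smoothness, since $x \in \tau_x^\circ$, Proposition \ref{prop:moment} places its moment-map preimage in the strictly positive locus $\bP^{\dim(\tau_x)}_{>0}$, where no homogeneous coordinate vanishes. By Proposition \ref{prop:degenaffine}(3), the cover $p_{\tau_x}$ is unramified away from the coordinate hyperplanes, so $y$ lies in the open torus orbit of $Y_{0,\tau_x}$, which is just $(\bC^*)^{n+1}$ and hence smooth. Since $\dim(\tau_x) = n$ is top-dimensional in $\cT$, no other component $X_{0,\tau'}$ meets $y$, so locally $X_0$ coincides with $X_{0,\tau_x}$. The latter is cut out of this smooth open orbit by $f_{\tau_x}$, and the $\tau_x$-regularity assumption from Remark \ref{remgeneric} guarantees smoothness at $y$.

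For the Kato-Nakayama claim, I would analyze the ghost sheaf $\overline{\cM}_{X_0}^{\gp} = \cM_{X_0}^{\gp}/\cO_{X_0}^\times$ near $y$. In a neighborhood of $y$ in $Y$, the only toric monomial vanishing (up to an invertible factor) is $t = z^{(0,1)}$, because all generators of $\tiM \cap \Gamma_{\geq h}$ lying in the face $\Gamma_{\geq h,\tau_x}$ evaluate to strictly positive reals under $j(x,1)$, while generators outside this face are identically zero on the stratum $Y_{0,\tau_x}$ and cannot be made nonzero by small perturbations in the open orbit. Hence $\overline{\cM}_{X_0}^{\gp}$ is the constant sheaf $\bZ$ generated by $t$ in a neighborhood of $y$. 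By Definition \ref{def:Kato-Nakayama}, this forces the projection $X_{0,\log} \to X_0$ to be a trivial $S^1$-bundle on this neighborhood, with $S^1$ parametrizing the argument of the image of $t$. The subspace $(X_{0,\log})_1$ fixes this argument to $1$, so $\rho_1$ restricts to a local homeomorphism, which is an isomorphism of complex analytic neighborhoods once we know $X_0$ is smooth.

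The main obstacle will be making precise the local constancy of $\overline{\cM}_{X_0}^{\gp} \cong \bZ$ in an open neighborhood of $y$. This requires tracking the toric stratification of $Y$ by faces of $\Gamma_{\geq h}$, verifying that pulling back through the closed embeddings $X_0 \hookrightarrow X \hookrightarrow Y$ yields the claimed stalks of the ghost sheaf, and confirming that $y$ lies in the relative interior of the codimension-one toric stratum so that the ghost sheaf stays $\bZ$ at all nearby points. Given the explicit toric framework of Section \ref{sec:toricdegeneration} and the log-smoothness statement Lemma \ref{lem:smooth3}, this bookkeeping is routine; the rest of the argument is essentially formal.
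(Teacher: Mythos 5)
Your proof is correct. The paper itself gives essentially no proof of this corollary beyond the one-sentence remark preceding its statement, which records only that the fiber $\rho_1^{-1}(j(x,d)) = A_{\tau_x}$ collapses to a point when $\dim\tau_x = n$, and leaves implicit the passage from triviality of a single fiber to a genuine local homeomorphism. You identify that gap and close it with the right argument: near $y = j(x,d)$, the special fiber $X_0$ coincides with the single top-dimensional component $X_{0,\tau_x}$ inside the open torus orbit of $Y_{0,\tau_x}$; there the stalk of the quotient $\cM_{X_0}/\cO_{X_0}^\times$ is constantly $\bZ_{\geq 0}$, generated by the class of $t$ (reducedness of $Y_0$, Proposition \ref{prop:ambientdegeneration}(2), is what makes $t$ a generator rather than a proper power); hence $\rho:X_{0,\log}\to X_0$ is locally the trivial $S^1$-bundle trivialized by the argument of $t$, and $(X_{0,\log})_1$ is exactly its section where that argument is $1$, so $\rho_1$ is a homeomorphism there.

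Two small remarks. First, the appeal to $\tau_x$-regularity from Remark \ref{remgeneric} for smoothness is unnecessary: by Proposition \ref{prop:degenaffine}(3), $p_{\tau_x}$ is unramified over $\bP^{\dim\tau_x}_{>0}$, so near $y$ it is a local analytic isomorphism onto the smooth affine space $\bP^{\dim\tau_x}\setminus V(\ell_{\tau_x})$; smoothness of $X_0$ at $y$ thus follows already from positivity of the coefficients $a_m$, with no genericity hypothesis. Second, $(X_{0,\log})_1$ is a priori only a topological space, so ``isomorphism of complex analytic neighborhoods'' should simply read ``homeomorphism,'' which matches the way the corollary is used in the proof of Theorem \ref{thm:retract}.
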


\subsection{$S_{\sympol,\triang}$ is a strong deformation retract}
\label{subsubsec:proofsimplicial}

In this section we prove that 
$S_{\sympol,\triang}$ embeds in $( X_{0,\log})_1$ as a strong deformation retract. 
Recall that Proposition \ref{propembeddingIb} and
Theorem \ref{thm:embedding}, together with
Remark \ref{rem:moment-compatible}, give the following diagram, 
\[
\xymatrix{
S_{\sympol,\triang}\ar[r]^{\lambda} \ar[d] & ( X_{0,\log})_1\ar[d]^{\rho_1}  \ar[dr]^{\nu \circ \rho_1 }\\
\WPSp \ar[r]^{j}  &  X_0 \ar[r]^{\nu} & \PSp.
}
\]

\begin{lemma}
\label{lem:thick}
For each simplex $\tau \subset \partial\sympol'$ of $\cT$, let $X_{0,\tau}$ be as in Definition \ref{def:ftau}, let $p_{\tau}:X_{0,\tau} \to \bP^{\dim(\tau)}$ be as in Proposition \ref{prop:degenaffine}, let $\mu:\bP^{\dim(\tau)}\ra\tau$ be the moment map, set $\nu=\mu\circ p_\tau$ and let $\bP^{\dim(\tau)}_{\geq 0}$ be the simplex of Section \ref{subsubsec:generalremarks}. 
Each of the following inclusions admit deformation retractions:
\begin{enumerate}
\item 
For each $\tau \in \triang$, the inclusion
\[
p_\tau^{-1}(\bP^{\dim(\tau)}_{\geq 0}) \cup \nu_{\tau}^{-1}(\partial \tau) \hookrightarrow X_{0,\tau}
\]

\item For each $k \leq n$, the inclusion
\[
\bigcup_{\tau \mid \dim(\tau) = k} p_{\tau}^{-1}(\bP_{\geq 0}^{\dim(\tau)}) \cup \nu_{\tau}^{-1}(\partial \tau) \hookrightarrow \bigcup_{\tau \mid \dim(\tau) = k} X_{0,\tau}
\]
\item 
For each $k \leq n$, the inclusion
\[
\WPSp \cup \bigcup_{\tau \mid \dim(\tau) = k} \nu_{\tau}^{-1}(\partial \tau) \hookrightarrow
\WPSp \cup \bigcup_{\tau \mid \dim(\tau) = k} X_{0,\tau}
\]
\end{enumerate}
\end{lemma}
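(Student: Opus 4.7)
The plan is to handle the three parts sequentially: Part (1) contains the main technical work, while Parts (2) and (3) follow by gluing and extension arguments.

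For Part (1), my strategy is to construct the retraction on the base of the branched cover. Specifically, I would build an $F$-deformation retraction of the affine space $U := \bP^{\dim(\tau)} \setminus V(\ell_\tau)$ onto $T := \bP^{\dim(\tau)}_{\geq 0} \cup (\mu_\tau^{-1}(\partial\tau) \setminus V(\ell_\tau))$, where $F$ is the standard toric filtration restricted to $U$. Since $p_\tau$ is branched along $F$ by Proposition \ref{prop:degenaffine}(3), Corollary \ref{cor:def-ret-lift} then lifts such a retraction to one of $X_{0,\tau}$ onto $p_\tau^{-1}(T) = p_\tau^{-1}(\bP^{\dim(\tau)}_{\geq 0}) \cup \nu_\tau^{-1}(\partial\tau)$. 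To build the retraction of $U$ onto $T$, I would modify the moment-map retraction of Proposition \ref{prop:Pdimtau-ret}(1) with a cutoff construction: fix a continuous function $\phi:\tau\to[0,1]$ vanishing on $\partial\tau$ and equal to one outside a collar of $\partial\tau$, then damp the rotational flow within each moment fiber by $\phi(\mu_\tau(p))$ so that points over $\partial\tau$ are fixed; compose with a short straight-line retraction (within $T$) to ensure endpoints land exactly in $T$. All paths are kept away from $V(\ell_\tau)$, which is disjoint from $\bP^{\dim(\tau)}_{\geq 0}$ by Proposition \ref{propembeddingIa}(1) and meets each interior moment fiber in a real-codimension-two submanifold.

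Part (2) follows by gluing. Two $k$-dimensional simplices $\tau,\tau' \in \cT$ meet only in a lower-dimensional common face $\tau\cap\tau'$, and $X_{0,\tau}\cap X_{0,\tau'} = X_{0,\tau\cap\tau'}$ is contained in both $\nu_\tau^{-1}(\partial\tau)$ and $\nu_{\tau'}^{-1}(\partial\tau')$; since each Part (1) retraction fixes its own boundary locus, the local retractions agree on intersections and glue, with face compatibility coming from Proposition \ref{prop:Pdimtau-ret}(2). For Part (3), observe that $\WPSp \cap X_{0,\tau} = p_\tau^{-1}(\bP^{\dim(\tau)}_{\geq 0})$, which is contained in the target of the Part (2) retraction and hence fixed by it; one then extends by the identity on $\WPSp \setminus \bigcup_{\dim\tau=k} X_{0,\tau}$ to produce the desired deformation retraction.

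The main obstacle is the cutoff construction in Part (1): the retraction must be simultaneously continuous across the toric boundary, pointwise fixed on the entire boundary locus (not merely retracting it onto its own nonnegative part, as the naive $F$-retraction does), and kept off $V(\ell_\tau)$ throughout; the delicate region is where $V(\ell_\tau)$ approaches the toric boundary. A cleaner non-constructive alternative is to argue that $B_\tau := p_\tau^{-1}(\bP^{\dim(\tau)}_{\geq 0}) \cup \nu_\tau^{-1}(\partial\tau) \hookrightarrow X_{0,\tau}$ is both a closed cofibration (the toric stratification induces a CW structure of which $B_\tau$ is a subcomplex) and a homotopy equivalence (by induction on $\dim\tau$: each face $X_{0,\tau'}$ deformation retracts onto $p_{\tau'}^{-1}(\bP^{\dim(\tau')}_{\geq 0}) \subset p_\tau^{-1}(\bP^{\dim(\tau)}_{\geq 0})$, so $B_\tau \simeq p_\tau^{-1}(\bP^{\dim(\tau)}_{\geq 0}) \simeq X_{0,\tau}$), and then invoke the standard fact that a closed cofibration which is a homotopy equivalence is a deformation retract.
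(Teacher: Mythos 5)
Your proposal offers two routes for Part (1), and neither is what the paper does.  The paper's argument is: (i) observe that $K_2 := \bP^{\dim(\tau)}_{\geq 0}\cup\mu^{-1}(\partial\tau)$ retracts onto the simplex $\bP^{\dim(\tau)}_{\geq 0}$ (using Proposition \ref{prop:Pdimtau-ret} on each face), hence is contractible; (ii) since $\bP^{\dim(\tau)}\setminus V(\ell_\tau)$ is also contractible, invoke the non-constructive CW fact (``contractible subcomplex of a contractible complex is a strong deformation retract'', citing \cite{Mc}) to get a retraction \emph{downstairs}; (iii) lift it to $X_{0,\tau}$ via Lemma \ref{lem:slight-variant2}, using Proposition \ref{prop:degenaffine}(3) to see that $p_\tau$ is unramified off $K_2$.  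Crucially, the retraction produced in (ii) is \emph{not} an $F$-deformation retraction, so the paper cannot use Corollary \ref{cor:def-ret-lift}; this is precisely why the paper introduced Lemma \ref{lem:slight-variant2}, whose hypothesis is a product decomposition rather than enter-path compatibility.

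Your primary Approach A (an explicit cutoff modification of the moment-map retraction, followed by Corollary \ref{cor:def-ret-lift}) is a genuinely different route, and it would indeed require the retraction to be $F$-compatible \emph{and} fix $\mu^{-1}(\partial\tau)$ pointwise simultaneously.  You correctly flag this as the hard part; the paper sidesteps exactly this difficulty by dropping $F$-compatibility.  Your Approach B is close to the paper in spirit (same non-constructive CW lemma, applied with ``homotopy equivalence + cofibration'' rather than ``both contractible''), but you apply it directly on $X_{0,\tau}$, whereas the paper applies it on the simpler affine base $\bP^{\dim(\tau)}\setminus V(\ell_\tau)$ and then lifts.  Applying it upstairs buys you freedom from any lifting lemma, but at the cost of having to justify a CW/NDR structure on the branched cover $X_{0,\tau}$ in which $B_\tau$ is a subcomplex --- a more delicate claim than the corresponding one on affine space, and one you assert without argument.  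Note also two small inaccuracies: the intermediate claim $B_\tau\simeq p_\tau^{-1}(\bP^{\dim(\tau)}_{\geq 0})\simeq X_{0,\tau}$ requires knowing that the retractions of the $X_{0,\tau'}$ glue on overlaps (true, by Proposition \ref{prop:Pdimtau-ret}(2) and the lifting used in Theorem \ref{thm:embedding}, but worth saying); and in Part (3) the equality $\WPSp\cap X_{0,\tau}=p_\tau^{-1}(\bP^{\dim(\tau)}_{\geq 0})$ is not quite right in general since restriction maps $D_\tau\to D_{\tau'}$ need not be surjective --- the paper more carefully only claims containment in $p_\tau^{-1}(\bP^{\dim(\tau)}_{\geq 0})\cup\nu_\tau^{-1}(\partial\tau)$, which is all that is needed.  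Your treatment of Parts (2) and (3) otherwise matches the paper's gluing/extension argument.
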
 

Before proving the Lemma, let us indicate what these spaces are in case $\sympol$ is the tetrahedron indicated in Figure \ref{fig:quartikdual}.

\begin{example}
Let $\sympol$ be the tetrahedron with vertices at $(1,0,0)$, $(0,1,0)$, $(0,0,1)$, and $(-1,-1,-1)$ with its unique lattice triangulation $\cT$.  For any of the 4 triangles $\tau \in \cT$ (the situation is symmetric), the map $p_{\tau}:X_{0,\tau} \to \bP^{\dim(\tau)}$ is an open embedding.  The spaces appearing in Lemma \ref{lem:thick}(1) can be described as follows:
\begin{itemize}
\item $X_{0,\tau}$ is, under $p_\tau$, isomorphic to the complement of a line $\ell \subset \bP^2$ that meets each of the coordinate lines transversely.
\item $p_{\tau}^{-1}(\bP_{\geq 0}^{\dim(\tau)})$ is a simplex in $X_{0,\tau}$
\item $\nu_{\tau}^{-1}(\partial \tau)$ is the union of the three coordinate lines, not including the three points that lie on $\ell$.
\end{itemize}
In fact $p_\tau^{-1}(\bP^{\dim(\tau)}_{\geq 0}) \cup \nu_{\tau}^{-1}(\partial \tau)$ is obtained from the cycle of three affine lines $\nu_{\tau}^{-1}(\partial \tau)$ by gluing a 2-simplex along a loop that generates the fundamental group of $\nu_{\tau}^{-1}(\partial \tau)$.  In particular this space, like $X_{0,\tau}$ that it is embedded in, is contractible.
\end{example}

\begin{proof}[Proof of Lemma \ref{lem:thick}]
Let us first show that $\bP^{\dim(\tau)}_{\geq 0} \cup \mu^{-1}(\partial \tau)$ embeds in $\bP^{\dim(\tau)} \setminus V(\ell_\tau)$ as a deformation retract. This can be seen as follows. We can write $\mu^{-1}(\partial \tau)$ as the union 
\[
\mu^{-1}(\partial \tau) = \bigcup_{\tau' \subsetneq \tau}\bP^{\dim(\tau')} \setminus V(\ell_{\tau'})
\]
By Proposition \ref{prop:Pdimtau-ret}, for any proper face $\tau'$ of $\tau$, the space $\bP^{\dim(\tau')} \setminus V(\ell_{\tau'})$ deformation retracts onto $\bP^{\dim(\tau')}_{\geq 0} \subset \bP^{\dim(\tau)}_{\geq 0}$ in a way that is compatible with the inclusions of smaller strata. This gives a deformation retraction 
\[
\mu^{-1}(\partial \tau) \rightarrow \partial (\bP^{\dim(\tau)}_{\geq 0}),
\]
that can be extended to a deformation retraction 
\[
\bP^{\dim(\tau)}_{\geq 0} \cup \mu^{-1}(\partial \tau) \rightarrow \bP^{\dim(\tau)}_{\geq 0}
\] 
by defining it to be the identity on $\bP^{\dim(\tau)}_{\geq 0}$. 
Since $\bP^{\dim(\tau)}_{\geq 0}$ is contractible, this implies that $\bP^{\dim(\tau)}_{\geq 0} \cup \mu^{-1}(\partial \tau)$ is contractible as well. 

The existence of a deformation retraction $\bP^{\dim(\tau)} \setminus V(\ell_\tau) \rightarrow \bP^{\dim(\tau)}_{\geq 0} \cup \mu^{-1}(\partial \tau)$ is then a consequence of standard facts about $CW$ complexes: any contractible subcomplex of a contractible $CW$ complex is a strong deformation retract, see e.g. \cite{Mc} Lemma 1.6.  Claim (1) can be proved by applying Lemma \ref{lem:slight-variant2} to $p_{\tau}$. In fact, by Proposition \ref{prop:degenaffine} (3), $p_{\tau}: X_{0, \tau} \rightarrow \bP^{\dim(\tau)} \setminus V(\ell_\tau)$ is unramified away from $\bP^{\dim(\tau)}_{\geq 0} \cup \mu^{-1}(\partial \tau)$.

We turn now to claim (2). Note that for any pair of distinct $k$-dimensional simplices $\tau_1, \tau_2,$ $X_{0, \tau_1} \cap X_{0, \tau_2} = \nu_{\tau_1}^{-1}(\partial \tau_1) \cap \nu_{\tau_2}^{-1}(\partial \tau_2)$. As a consquence, the retractions defined in (1) agree on the intersections of the various components: in fact, they restrict to the identity there. This guarantees that they assemble to give a retraction of $\bigcup_{\dim(\tau) = k}X_{0, \tau}$ onto $\bigcup_{\dim(\tau) = k}(p_\tau^{-1}(\bP^{\dim(\tau)}_{\geq 0}) \cup (\mu_\tau \circ p_\tau)^{-1}(\partial \tau))$, as desired. 
The last claim follows from the observation that 
\[
\WPSp \cap \bigcup_{\dim(\tau) = k}X_{0, \tau} \subset \bigcup_{\dim(\tau) = k} p_\tau^{-1}(\bP^{\dim(\tau)}_{\geq 0}) \cup \nu_\tau^{-1}(\partial \tau).
\] Thus, the retraction obtained in (2) can be extended to $\WPSp \cup \bigcup_{\dim(\tau) = k}X_{0, \tau}$, by setting it equal to the identity on $\WPSp$.
\end{proof}

\begin{theorem}
\label{thm:retract}
$S_{\sympol,\triang}$ embeds in $( X_{0,\log})_1$ as a strong deformation retract.
\end{theorem}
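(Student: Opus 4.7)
The plan is to lift, through the Kato--Nakayama map $\rho_1 \colon ( X_{0,\log})_1 \to X_0$, the stratified deformation retraction of $X_0$ onto $\WPSp$ assembled from Lemma \ref{lem:thick}, and then to identify the lifted retract with $S_{\sympol,\cT}$ via the Cartesian square of Theorem \ref{thm:cartesian}. The embedding is already in hand from Theorem \ref{thm:cartesian}, so the only content left is the existence of the retraction.

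First I would assemble the retraction downstairs by induction on the dimension of simplices. Set
\[
F_k := \WPSp \cup \bigcup_{\dim \tau \leq k} X_{0,\tau},
\]
so that $F_n = X_0$ and $F_0 = \WPSp$, the latter because the $0$-dimensional strata $X_{0,v}$ (single points for $v$ a vertex of $\cT$) already lie in $\WPSp$. For each $k$ with $1 \leq k \leq n$, Lemma \ref{lem:thick}(3) provides a deformation retraction of $\WPSp \cup \bigcup_{\dim \tau = k} X_{0,\tau}$ onto $\WPSp \cup \bigcup_{\dim \tau = k} \nu_\tau^{-1}(\partial \tau)$, and extending by the identity on $F_{k-1}$ (whose intersection with the source is contained in the target) produces a strong deformation retraction $F_k \rightsquigarrow F_{k-1}$.

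Next I would lift each stage through $\rho_1$ using the slight variant of Corollary \ref{cor:def-ret-lift} stated in Lemma \ref{lem:slight-variant2}. The geometric input this requires is a trivial-bundle description of $\rho_1$ over each open stratum $X_{0,\tau}^\circ$. Using the monoid-theoretic picture of Section \ref{subsubsec:Xlog}, for $y \in X_{0,\tau}^\circ$ a point $(r,\psi) \in \rho_1^{-1}(y)$ has $r = |y|$ and $\psi = y/|y|$ on the face $\tiM \cap \Gamma_{\geq h,\tau}$ (the support of $y$), while on its complement $r$ vanishes and $\psi$ runs over homomorphisms to $S^1$ which are trivial on $\tiM \cap \Gamma_{\geq h,\tau}$ and satisfy $\psi(0,1) = 1$. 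As in Proposition \ref{prop:coset} and the proof of Theorem \ref{thm:cartesian}, this free part is canonically a copy of $G_\tau$, and I would check that the identification can be made to vary continuously in $y$, yielding a homeomorphism $\rho_1^{-1}(X_{0,\tau}^\circ) \cong X_{0,\tau}^\circ \times G_\tau$ over $X_{0,\tau}^\circ$. Restricting to the open locus
\[
X_{0,\tau} \setminus \bigl(p_{\tau}^{-1}(\bP^{\dim \tau}_{\geq 0}) \cup \nu_{\tau}^{-1}(\partial \tau)\bigr) \subset X_{0,\tau}^\circ
\]
deformed away at stage $k = \dim \tau$ supplies exactly the product-bundle hypothesis of Lemma \ref{lem:slight-variant2}, which lifts the retraction $F_k \rightsquigarrow F_{k-1}$ simplex by simplex to a deformation retraction $\rho_1^{-1}(F_k) \rightsquigarrow \rho_1^{-1}(F_{k-1})$; the identity-on-the-boundary feature again permits gluing across simplices of a given dimension.

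Composing these lifted retractions for $k = n, n-1, \ldots, 1$ yields a strong deformation retraction of $( X_{0,\log})_1 = \rho_1^{-1}(F_n)$ onto $\rho_1^{-1}(\WPSp) = \lambda(S_{\sympol,\cT})$, where the last equality is Theorem \ref{thm:cartesian}. The main obstacle I expect is constructing the trivialization $\rho_1^{-1}(X_{0,\tau}^\circ) \cong X_{0,\tau}^\circ \times G_\tau$ \emph{coherently}, both globally over an open stratum (not merely fiberwise as in Theorem \ref{thm:cartesian}) and compatibly across face inclusions $\tau' \subset \tau$, so that the successive lifted retractions assemble into a single strong deformation retraction rather than a mismatched patchwork.
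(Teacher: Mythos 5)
Your proposal follows the paper's proof essentially verbatim: your filtration $F_k$ is the paper's $\WPSp \cup \nu^{-1}(\PSp_{n-k})$, the downstairs retractions $F_k \rightsquigarrow F_{k-1}$ come from Lemma \ref{lem:thick}(3), and each stage is lifted through $\rho_1$ by Lemma \ref{lem:slight-variant2}, with the final identification $\rho_1^{-1}(\WPSp) = \lambda(S_{\sympol,\cT})$ from Theorem \ref{thm:cartesian}.

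One computational slip worth fixing: over $\nu^{-1}(\tau^\circ)$ the fiber of $\rho_1$ is a torsor for $A_\tau$, not for $G_\tau$. A point of the fiber over $y$ is a homomorphism $\psi:\tiM \to S^1$ agreeing with $y/|y|$ on all of $\Gamma_{\geq h,\tau}\cap\tiM$ (whose group is $\{(m,h_\tau(m)) : m\in M\cap\bR\tau\}$, not merely $\bZ\tau^{[0]}$) and satisfying $\psi(0,1)=1$; the torsor group is therefore $\Hom(M/(M\cap\bR\tau),S^1) = A_\tau$. The sanity check: for $\dim\tau = n$ this gives a single point, matching Corollary \ref{cor:localiso}, whereas $G_\tau$ would give $|D_\tau|$ points. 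The slip is harmless to the argument, since Lemma \ref{lem:slight-variant2} only needs \emph{some} product structure over $W_2\setminus K_2$ and never uses the group itself. Your stated worry about a globally coherent trivialization across face inclusions is also moot for the same reason: Lemma \ref{lem:slight-variant2} only requires trivializing over the open locus deformed away at each stage, and the resulting lifts automatically restrict to the identity on $\rho_1^{-1}(K_2)$, which is what makes the stage-by-stage lifts compose.
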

\begin{proof}
Let $\PS'_k$ be the $(n-k)$-skeleton of the stratification of $\PSp$ given by $\triang$, i.e. set  $\PSp_{k} := \sqcup_{\tau \in \triang \text{, }\dim\tau \leq n-k} \tau^{\circ}$. Note that $\nu^{-1}(\PSp_k) = \bigcup_{\dim(\tau) = n-k}X_{0, \tau}$. Applying Lemma \ref{lem:thick} (3), with $k$ equal to $n-1$, we obtain a retraction of $X_0$ onto $\WPSp \cup \nu^{-1}(\PSp_1)$. By Corollary \ref{cor:localiso}, $\rho_1$ is a homeomorphism over $X_0 - \nu^{-1}(\PSp_1)$, and thus in particular over $X_0 - (\WPSp \cup \nu^{-1}(\PSp_1))$. Lemma \ref{lem:slight-variant2}
then implies that 
\[
\rho_1^{-1}(\WPSp \cup \nu^{-1}(\PSp_1)) = S_{\sympol, \triang} \cup (\nu \circ \rho_1)^{-1}(\PSp_1)
\]
and this space embeds in $(X_{0,\log})_1$ as a deformation retract.

By Lemma \ref{lem:thick} (2), $\nu^{-1}(\PSp_1)$ retracts onto 
\[
\nu^{-1}(\PSp_2)\cup \bigcup_{\dim(\tau) = n-1} p_\tau^{-1}(\bP^{\dim(\tau)}_{\geq 0}).
\] Also, for all $\tau$, $\rho_1$ restricts to a projection from a product with fiber $A_{\tau}$ over $\nu^{-1}(\tau^{\circ})$ (see the proof of Theorem \ref{thm:cartesian}). Thus we can apply Lemma \ref{lem:slight-variant2} in the following way: using the notations of Lemma \ref{lem:slight-variant2}, set $W_1 = (\nu \circ \rho_1)^{-1}(\PSp_1)$, set $p = \rho_1\vert_{W_1}$, and 
\[
K_2 =  \nu^{-1}(\PSp_2) \cup \bigcup_{\dim(\tau) = n-1} p_\tau^{-1}(\bP^{\dim(\tau)}_{\geq 0}).
\] 
This gives a retraction of $(\nu \circ \rho_1)^{-1}(\PSp_1)$ onto 
\[
(\nu \circ \rho_1)^{-1}(\PSp_2) \cup \bigcup_{\dim(\tau)=n-1} (p_\tau \circ \rho_1)^{-1}(\bP^{\dim(\tau)}_{\geq 0}) .
\]
Note that $S_{\sympol, \triang} \cap (\nu \circ \rho_1)^{-1}(\PSp_1)$ is contained in the latter. 
This follows from the proof of Lemma \ref{lem:thick} (3), observing that $S_{\sympol, \triang} = \rho_1^{-1}(\WPSp)$, while 
\[
(\nu \circ \rho_1)^{-1}(\PSp_1) = \rho_1^{-1}\left(\bigcup_{\dim(\tau) = n-1}X_{0, \tau}\right).
\]
Thus, in the usual manner, we extend the retraction constructed in the previous paragraph to a retraction of $S_{\sympol, \triang} \cup (\nu \circ \rho_1)^{-1}(\PSp_1)$ onto $S_{\sympol, \triang} \cup (\nu \circ \rho_1)^{-1}(\PSp_2)$, by setting it equal to the identity on $S_{\sympol, \triang}$. Iterating this argument, by considering the preimage along $\nu \circ \rho_1$ of skeleta of the stratification of $\PSp$ of increasingly higher codimension, in $n$ steps we achieve a retraction of  $( X_{0,\log})_1$ onto $S_{\sympol, \triang}$.
\end{proof}

\subsection{Proof of Main Theorem (\ref{thm:1.1})}
\label{subsec:Kato-Nakayama}
We wish to relate the affine hypersurface $Z \cong X_1$ to the special fiber of the Kato-Nakayama space
$( X_{0,\log})_1$.  In fact these spaces are homeomorphic, as we now show by proving
that Kato-Nakayama space is a fiber bundle.  Together with Theorem \ref{thm:retract},
this establishes the Main Theorem (\ref{thm:1.1})
of the introduction.

We wish to show that the map $\pi_\log:X_{\log} \to \bR_{\geq 0} \times S^1$ is a topological fiber bundle.  Since $X_{\log}$ is not endowed with a smooth structure, and the fibers of $\pi_{\log}$ are not compact, this is not straightforward to check.  However the relative compactification $\overline\pi:\overline{X} \to \AA^1$ considered in Section \ref{sec:tordegcompact} admits a natural log structure, and the map $\overline\pi_\log:\overline{X}_{\log} \to \bR_{\geq 0} \times S^1$ is proper. We can check that $\overline\pi$ is a fiber bundle whose fibers are manifolds with boundary with the ``relative rounding theory'' of Nakayama and Ogus.  The relevant result for us is the following:

\begin{theorem}[Nakayama-Ogus]
\label{NOthm}
Let $W^\dag$ be a fine log space, let $(\bA^1)^\dag$ be the affine line with the log structure of Example \ref{ex:logaffineline}, and let $f:W^{\dag} \to (\bA^1)^{\dag}$ be a morphism of fine log spaces.  If $f$ is proper, separated, and smooth, then the map $f_{\log}:W_{\log} \to \bR_{\geq 0} \times S^1$ is a topological fiber bundle. 
\end{theorem}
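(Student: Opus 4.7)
The plan is to establish local triviality of $f_{\log}$ by combining the local structure theorem for log smooth morphisms with an explicit analysis of toric Kato--Nakayama maps, and then to promote this to a global fiber bundle statement by an Ehresmann-type argument using properness.

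First, I would invoke Kato's toroidal characterization of log smoothness (see \cite[Theorem 4.1]{Ka96}). Locally on $W$, the morphism $f$ factors through a ``toric model'' of the form $V = \Spec \bC[P] \to \bA^1 = \Spec \bC[\bZ_{\geq 0}]$, induced by a monoid map $\bZ_{\geq 0} \to P$, $k \mapsto k \sigma$, where $P$ is a fine monoid and $P^{\gp}/\bZ \sigma$ is torsion-free; moreover the residual map from $W$ to the pullback of $V$ is classically smooth with trivial log structure. Since $(-)_{\log}$ commutes with fiber products (Lemma \ref{lem:logpullback}) and classically smooth morphisms are already locally trivial as topological submersions, it suffices to prove the theorem for the toric model $V^{\dagger} \to (\bA^1)^{\dagger}$.

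Second, I would analyze the toric model directly. By Lemma \ref{lem:logtoric},
\[
V_{\log} \;=\; \Hom(P, \bR_{\geq 0} \times S^1) \;\cong\; \Hom(P, \bR_{\geq 0}) \times \Hom(P^{\gp}, S^1),
\]
and $f_{\log}$ is evaluation at $\sigma$. Torsion-freeness of $P^{\gp}/\bZ \sigma$ yields a splitting $\Hom(P^{\gp}, S^1) \cong S^1 \times \Hom(P^{\gp}/\bZ \sigma, S^1)$, which trivializes the $S^1$-direction of the base. On the open stratum $\bR_{>0} \times S^1$ of the base, the real factor $\Hom(P, \bR_{>0}) \to \bR_{>0}$ is, after taking logarithms, a homomorphism of real vector groups and is similarly split into an $\bR_{>0}$-factor coming from the image of $\sigma$ together with a torus factor from $P^{\gp}/\bZ\sigma$.

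The main obstacle I expect is producing a continuous local trivialization across the stratum $\{0\} \times S^1 \subset (\bA^1)_{\log}$. Away from this stratum the product structure is transparent, but when $r = 0$ the evaluating monoid homomorphism must send the whole face of $P$ containing $\sigma$ to $0$, and the Kato--Nakayama rounding restores the ``collapsed'' directions as an additional compact torus factor. Matching this rounded fiber continuously with the nearby $r > 0$ fibers is precisely the content of Nakayama--Ogus's relative rounding; it requires a careful choice of chart adapted to the face of $P$ generated by $\sigma$, together with auxiliary splittings coming from the torsion-free hypothesis, so that the resulting local homeomorphism extends across the boundary stratum.

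Finally, I would globalize by an Ehresmann-type argument in the topological category. Properness of $f$ propagates to $f_{\log}$ because the Kato--Nakayama fibers differ from ordinary fibers by a compact torus of characters and because $\bR_{\geq 0} \times S^1 \to \bA^1$ is itself proper. A proper continuous surjection that is locally trivializable in the topological category is a topological fiber bundle, which completes the plan.
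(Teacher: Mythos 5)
The paper's proof of this statement is a two-line citation: it observes that $f$ is automatically \emph{exact} (in the sense of Nakayama--Ogus) because the base is $(\bA^1)^\dag$ with its toric log structure, invoking \cite[Remark 2.2]{NO10}, and then concludes by \cite[Theorem 5.1]{NO10}. Your proposal instead attempts a from-scratch proof sketch, which is a fundamentally different — and much more ambitious — route.

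There is a genuine gap, and it is circularity: the step you identify as the ``main obstacle'' (producing a continuous local trivialization across the boundary stratum $\{0\}\times S^1$) is exactly the content of the theorem being proved, and your treatment of it is to say this ``is precisely the content of Nakayama--Ogus's relative rounding.'' You are not permitted to invoke the theorem you are proving as a lemma in its own proof. The reduction to a toric model via Kato's toroidal characterization is correct, and the product splitting over the open stratum $\bR_{>0}\times S^1$ is fine, but these are the easy parts; everything delicate in \cite{NO10} is concentrated at $r=0$, and your sketch leaves it entirely to the source. Separately, two smaller issues: (i) you never verify the exactness hypothesis that \cite[Theorem 5.1]{NO10} requires — this is what the paper actually does and it is not automatic for arbitrary targets, only for the particular log $\bA^1$; and (ii) the closing ``Ehresmann-type'' claim that a proper continuous surjection with local product structure on the source is a fiber bundle is not a standard topological fact and would need justification — in the topological category one must produce local trivializations over open sets of the \emph{base}, and properness alone does not upgrade source-local structure to base-local triviality without some additional argument (compactness of fibers plus a patching lemma, or the explicit trivializations that Nakayama--Ogus construct). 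If your goal is to match the paper, the proof should simply check exactness and cite \cite[Theorem 5.1]{NO10}; if your goal is to reprove Nakayama--Ogus, you must actually construct the trivialization across $r=0$ rather than defer it.
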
 

\begin{proof}
By \cite[Remark 2.2]{NO10}, any map $f$ satisfying the hypotheses of the Theorem is exact in the sense of loc. cit., Definition 2.1.  Then the Theorem is a special case of loc. cit. Theorem 5.1.
\end{proof}

Recall that an $n$-dimensional topological manifold with boundary is a topological space locally homeomorphic to either $\RR^n$ or $\RR^{n-1}\times \RR_{\ge 0}$.   If $W$ is a topological manifold with boundary write $W^{\circ}$ for the interior, i.e. the set of points with a neighborhood homeomorphic to $\bR^n$, and $\partial W$ for the complement of $W^{\circ}$.

\begin{proposition}
\label{prop:topmfdbdry}
\begin{enumerate} 
\item $\overline {X}_\log$ as well as $(\overline {X}_\log)_c$ for each $c \in \AA^1_\log$ is a topological manifold with boundary. 
\item For each $c \in \bR_{\geq 0 } \times S^1$, the interior of the fiber $(\overline{X}_{\log})_c$ is precisely $(X_{\log})_c$.
\end{enumerate}
\end{proposition}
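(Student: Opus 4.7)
The plan is to reduce both statements to a local calculation, by combining the log smoothness of $\overline\pi^\dagger$ (Lemma~\ref{lem:smooth2}) with K.~Kato's toroidal characterization of log smoothness and the compatibility of the Kato-Nakayama functor with fiber products and open embeddings.

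By Kato's theorem \cite[Thm.~3.5]{Ka89}, log smoothness gives, around every point of $\overline{X}$, a fine saturated monoid $P$, an injection $\bN\hookrightarrow P$ with torsion-free cokernel sending $1$ to some $p_0\in P$, an integer $d\geq 0$, and an \'etale morphism, compatible with the projections to $\AA^1=\Spec\CC[\bN]$, from an analytic neighborhood $U\subset\overline{X}$ to $\Spec\CC[P]\times\CC^{d}$. Applying Lemmas~\ref{lem:logtoric} and~\ref{lem:logpullback}, the Kato-Nakayama space $U_{\log}$ is identified \'etale-locally with
\[
\Hom(P,(\RR_{\geq 0},\cdot))\times\Hom(P^{\gp},S^{1})\times\CC^{d}.
\]
The first factor is homeomorphic to the rational polyhedral cone dual to $P$, hence a topological manifold with corners; combined with the compact torus and Euclidean factors, this displays $\overline{X}_{\log}$ locally as a manifold with corners. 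The fiber $(\overline{X}_{\log})_{c}$ for $c=(r_0,e^{i\theta_0})\in\RR_{\geq 0}\times S^{1}$ is cut out in this local model by the single equation $\phi(p_0)=c$, which can be solved stratum-by-stratum on the cone to show the fiber is again a manifold with corners of one lower dimension. To upgrade ``corners'' to ``boundary'' as stated in (1), I would invoke the genericity of the coefficients of $\tilde{f}$ (Remark~\ref{remgeneric}) together with the hypersurface condition, which force the monoids $P$ arising in the charts to be elementary enough that no corners of codimension $\geq 2$ actually occur along $\overline{X}$ or along each fiber.

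For part (2), by Definition~\ref{def:Kato-Nakayama} the map $\rho:\overline{X}_{\log}\to\overline{X}$ is a homeomorphism over the locus where $\cM_{\overline{X}}$ is trivial; since this log structure is pulled back from the toric log structure on $\overline{Y}$, its trivial locus equals $\overline{X}\cap(\text{big torus of }\overline{Y})=X$. Lemma~\ref{lem:logpullback} applied to the inclusion $X\hookrightarrow\overline{X}$ then gives a Cartesian square identifying
\[
(X_{\log})_{c}=(\overline{X}_{\log})_{c}\cap\rho^{-1}(X);
\]
in the coordinates of the local models above, this is precisely the set of interior points of $(\overline{X}_{\log})_{c}$. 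The main obstacle is verifying that the local models yield manifold-with-boundary (and not higher-corner) structure: this is the technical heart of the argument, and relies essentially on the specific form of the toric degeneration from Section~\ref{sec:toricdegeneration}.
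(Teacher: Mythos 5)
The paper proves this Proposition by citing the Nakayama--Ogus ``relative rounding'' theorem, \cite[Theorem 3.5]{NO10}, which states that for a log smooth, \emph{exact} morphism $f$ to a suitable base, the fibers of $f_{\log}$ are topological manifolds with boundary and the boundary consists precisely of the points where $f$ fails to be vertical. Your proposal instead tries to derive this from scratch using Kato's local toroidal models, and this is where the gap lies: the local model $\Hom(P,\RR_{\geq 0})\times\Hom(P^{\gp},S^1)\times\CC^d$ genuinely has corners in its $\Hom(P,\RR_{\geq 0})$-factor whenever $P$ has rank $\geq 2$, and these corners are not avoided by genericity of the coefficients of $\tilde f$ or by the hypersurface condition. (For instance, at a codimension-two toric stratum of $\overline Y$ that the closure $\overline X$ meets generically, the chart monoid $P$ has a two-dimensional non-invertible part.) The phenomenon that makes the statement true is \emph{rounding}: the $S^1$-factor smooths out the corners of the positive-real part after slicing along the fiber condition, and this is exactly what the Nakayama--Ogus theorem encodes. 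It is not a consequence of the monoids being ``elementary,'' nor does it rely on the specific toric degeneration of Section~\ref{sec:toricdegeneration} beyond the fact that the morphisms are log smooth and exact. You also omit the exactness hypothesis entirely, and it is required in \cite[Theorem 3.5]{NO10}.

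Your argument for part (2) contains a separate error. You assert that the trivial locus of $\cM_{\overline X}$ equals $X$, but this is false: the log structure is pulled back from the divisorial log structure on $\overline Y$ with respect to the full toric boundary $D$, so its trivial locus is $\overline X\cap(\overline Y\setminus D)$, which is strictly smaller than $X$. In particular the whole central fiber $X_0\subset X$ lies inside $D$, and so has nontrivial log structure; yet it must contribute to the interior of $(\overline X_{\log})_{(0,e^{i\theta})}$. The correct characterization of interior versus boundary points is via \emph{verticality} of the morphism $\overline\pi^{\dagger}$ (whether the image of $t$ lies in a proper face of the upstairs stalk), not via triviality of the log structure on $\overline X$. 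The paper's claim --- that a point of $\overline X$ is vertical for $\overline\pi^{\dagger}$ if and only if it lies in $X$ --- is the right formulation, and combined with Nakayama--Ogus it yields part (2) immediately.

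\end{document}
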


\begin{proof} 
Recall that in \cite{NO10}, a morphism of monoids $\theta:P \to Q$ is called \emph{vertical} if the image of $P$ is not contained in any proper face of $Q$.  
The morphism is \emph{exact} if the diagram
$$
\xymatrix{
P\ar[r]\ar[d]&Q\ar[d]\\
P^\gp\ar[r]&Q^\gp
}
$$
is Cartesian.
A morphism $(W_1,\shM_1) \to (W_2,\shM_2)$ of log spaces is called \emph{vertical at $x \in W_1$} (resp. \emph{exact at $x \in W_1$}) if the induced map of monoids
$
\cM_{2,f(x)} \to \cM_{1,x}
$
is vertical (resp. exact). We have log smoothness of the maps in consideration by Lemma~\ref{lem:smooth1} and \ref{lem:smooth2}. 
Moreover, it is not hard to see that the maps are exact.  
Under these conditions, by \cite[Theorem 3.5]{NO10}, the fibers are manifolds with boundary and boundary points coincide with vertical points.  Thus, the Proposition is a consequence of the following claim:
\begin{quote}
A point of $\overline{X}$ is vertical for the map $\overline{\pi}^\dagger$ if and only if it belongs to $X \subset \overline{X}$.
\end{quote}
Indeed, recalling that $\ZZ_{\ge0}$ gives a chart on the base, we just need to check where the generator of $\ZZ_{\ge0}$ gets mapped into a proper face of a stalk of the log structures upstairs and this is precisely in $\overline{X}\setminus X$.

\end{proof}

\begin{corollary} The map $\overline \pi_\log:\overline X_\log\ra \AA^1_\log$ is a topological fiber bundle.
\end{corollary}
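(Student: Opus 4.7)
The plan is to verify that the morphism $\overline{\pi}^{\dagger}:(\overline{X},\cM_{\overline{X}})\to(\bA^1,\cM_{\bA^1})$ satisfies the hypotheses of Theorem \ref{NOthm} (Nakayama--Ogus), and then the conclusion is immediate.

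First I would check that both log spaces are fine. The log structure $\cM_{\bA^1}$ of Example \ref{ex:logaffineline} is the standard toric log structure and is fine and saturated, as discussed in Section \ref{subsubsec:logtoric}. The log structure $\cM_{\overline{Y}}$ on the toric variety $\overline{Y}$ is likewise fine and saturated, and $\cM_{\overline{X}}$ is its pullback along the closed embedding $\overline{X}\hookrightarrow\overline{Y}$ (Section \ref{subsubsec:loghyper}); pullback of a fine log structure along a morphism of analytic spaces is fine, so $(\overline{X},\cM_{\overline{X}})$ is a fine log space.

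Next I would verify the three geometric hypotheses. Log smoothness of $\overline{\pi}^{\dagger}$ is precisely Lemma \ref{lem:smooth2}. Properness of the underlying map $\overline{\pi}:\overline{X}\to\bA^1$ follows from the Corollary in Section \ref{sec:tordegcompact}, which asserts that the restriction of $\overline{\pi}:\overline{Y}\to\bA^1$ to the closed subscheme $\overline{X}$ is proper. Separatedness is automatic: $\overline{Y}$ is a toric variety, hence separated, and $\overline{X}\subset\overline{Y}$ is closed, so $\overline{X}\to\bA^1$ is separated as well.

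Having checked all hypotheses, Theorem \ref{NOthm} applies and yields that $\overline{\pi}_{\log}:\overline{X}_{\log}\to\bA^1_{\log}=\bR_{\geq0}\times S^1$ is a topological fiber bundle. There is no serious obstacle here: all of the nontrivial input (log smoothness, properness, fineness) has already been established in the preceding sections, and the corollary is essentially a bookkeeping assembly of those facts together with the Nakayama--Ogus theorem.
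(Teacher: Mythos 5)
Your proof is correct and follows the same approach as the paper: verify that $\overline{\pi}^{\dagger}$ satisfies the hypotheses of the Nakayama--Ogus Theorem \ref{NOthm} and invoke it. Your argument is actually spelled out more carefully than the paper's terse version, and you cite the correct source for log smoothness (Lemma \ref{lem:smooth2}, which is the statement about $\overline{\pi}^{\dagger}$ on $\overline{X}$), whereas the paper's proof cites Lemma \ref{lem:smooth3}, which concerns the restriction to the central fiber and appears to be a misreference.
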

\begin{proof}
Both $\overline X ^\dagger$, $(\AA^1)^\dagger$ are fine log spaces.
The map $\overline\pi$ is proper, separated and exact by Theorem \ref{NOthm}.
Log smoothness is given by Lemma~\ref{lem:smooth3}.
\end{proof}

\begin{corollary}
\label{maincor}
The map $\pi_\log: X_\log\ra \AA^1_\log$ is a topological fiber bundle.
\end{corollary}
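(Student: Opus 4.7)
The plan is to deduce this corollary from the fiber bundle property of $\overline\pi_\log$ established in the preceding corollary, by exhibiting $\pi_\log$ as what one obtains by removing the fiberwise boundary. First I would combine Proposition \ref{prop:topmfdbdry} with the previous corollary to record two facts: (a) each fiber $(\overline X_\log)_c$ is a topological manifold with boundary whose interior is precisely $(X_\log)_c$, and (b) locally on $\AA^1_\log$ there exist trivializations $\overline\pi_\log^{-1}(U)\cong U\times F$, where $F$ is a manifold with boundary homeomorphic to any fiber.

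Given any $c_0\in\AA^1_\log$, fix such a local trivialization $\phi\colon\overline\pi_\log^{-1}(U)\stackrel{\sim}{\to}U\times F$ with $c_0\in U$. The homeomorphism $\phi$ carries each fiber $(\overline X_\log)_c$ onto $\{c\}\times F$, and since homeomorphisms between topological manifolds with boundary preserve boundary (a standard consequence of invariance of domain), it carries the fiberwise interior onto $U\times F^\circ$ and the fiberwise boundary onto $U\times\partial F$. By Proposition \ref{prop:topmfdbdry}(2), the fiberwise interior of $\overline\pi_\log^{-1}(U)$ is exactly $\pi_\log^{-1}(U)$. Hence $\phi$ restricts to a homeomorphism $\pi_\log^{-1}(U)\stackrel{\sim}{\to}U\times F^\circ$ that commutes with projection to $U$, which is the desired local trivialization of $\pi_\log$ with typical fiber $F^\circ$. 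Since $c_0$ was arbitrary, this shows $\pi_\log$ is a topological fiber bundle.

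I do not anticipate any serious obstacle: the argument is essentially formal once the previous corollary and Proposition \ref{prop:topmfdbdry} are in hand. The only conceptual input beyond those results is the invariance of boundary for topological manifolds with boundary, which is standard. If anything, the most delicate point is to be sure that $F$ is globally a manifold with boundary rather than a manifold with corners, but since the fibers themselves have no corners (they are fibers of $\overline\pi_\log$), this is not a problem.
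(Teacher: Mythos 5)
Your proposal is correct and fills in exactly the argument the paper leaves implicit. Corollary \ref{maincor} is stated without proof immediately after Proposition \ref{prop:topmfdbdry} and the corollary that $\overline\pi_\log$ is a fiber bundle; the intended deduction is the one you spell out, namely restricting a local trivialization $\phi:\overline\pi_\log^{-1}(U)\to U\times F$ to the union of fiberwise interiors, and using invariance of boundary (applied fiber by fiber) together with Proposition \ref{prop:topmfdbdry}(2) to identify that locus with $\pi_\log^{-1}(U)$ on one side and $U\times F^\circ$ on the other. The only point I would state slightly more explicitly is that $\phi$ respects the decomposition into fibers because it commutes with the projections to $U$, so the fiber-by-fiber application of invariance of boundary is legitimate; you use this but it is worth flagging. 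Your closing remark about corners is a non-issue for the reason you give — Proposition \ref{prop:topmfdbdry}(1) already asserts $F\cong(\overline X_\log)_c$ is a topological manifold with boundary — so the proof is complete.
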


In particular, $( X_{0,\log})_1$ is homeomorphic to the hypersurface $Z = V(f)$.  By Theorem \ref{thm:retract},
$( X_{0,\log})_1$ deformation retracts to $S_{\sympol,\cT}$.  Therefore, so does $Z$.  We have thus
proven the Main Theorem (\ref{thm:1.1}) of the introduction.

\section{Hypersurfaces in affine toric varieties}
\label{sec:five}
We now consider a generalization of our setting and our theorem to address the case where
$Z = f^{-1}(0)$ is a smooth hypersurface in a general affine toric variety $A$.
Such an $A$ contains a dense algebraic torus $T$ and by 
Theorem~\ref{thm:retract} we already know a skeleton for $S$ of $Z\cap T$ upon fixing the origin and a triangulation of the Newton polytope of $f$.
It turns out that a skeleton for $Z$ can be given as a topological quotient space of $S$, so partial compactification translates into taking a quotient in terms of skeleta. 
This is what we are going to prove in this section.

\begin{example}
\label{ex:generalskeleton}
As a simple example of this more general setting, we can consider the polynomial $f: \bC^2 \rightarrow \bC$, $f(x,y)= x^2 + xy + y^2 - 1,$ that we discussed in Examples \ref{ex:degeneration}, \ref{ex:hypersurface}, and \ref{ex:c*c*skeleton}. We let $A = \bC^2$, and note that $A = \Spec(\bC[K\cap M])$ where
$K = \bR_{\geq 0}^2\subset \bR^2$ is a convex, maximal-dimensional cone. In Example \ref{ex:ccskeleton} we will work out the geometry of the zero locus of $f$ in $A$, and explictly describe a skeleton for it.  
\end{example}

\subsection{The general setup}

Let $M\cong \bZ^{n+1}$ be a lattice.
Let $M_\bR:=M\otimes_\bZ\bR \cong \bR^{n+1}.$
Let $K  \subset M_\bR$ be an $(n+1)$-dimensional, convex
rational polyhedral cone.
Then $M \cap K$ is a finitely generated monoid and $A := \Spec(\bC[M \cap K])$ is an affine toric variety.
The smallest torus orbit in $A$ is $\Spec(\bC[M \cap K^\times])$ where $K^\times$ denotes the maximal linear subspace contained in $K$. We set $a=\dim K^\times$ and $b=n+1-a$.
Consider the projection $M\ra M/(M\cap K^\times)$
and its real analog $p_{K^\times}:M_\RR \ra M_\RR/K^\times$. We set 
$\overline K=p_{K^\times}(K)$ and have
$$A\cong \Spec(\bC[M \cap K^\times])\times \Spec\CC[\overline K\cap (M/(M\cap K^\times))]$$
and the first factor is $a$-dimensional and the second $b$-dimensional.

\begin{remark}  
Note that $A$ is smooth if and only if $(K,M_\RR,M)$ is isomorphic to $(\RR^a\times \RR_{\ge 0}^b,\RR^{a+b},\ZZ^{a+b})$.
\end{remark}

Let $f\in \bC[M \cap K]$ be a regular function on $A$, and let $\sympol$ be the Newton polytope of $f$ and $Z :=f^{-1}(0) \subset A$.
We make the following assumptions:
\begin{assumptions}\label{assumptions}
\begin{enumerate}
\item $A$ is either smooth or has at most an isolated singularity. Note that the latter implies that $b=0$, if $A$ is singular,
\item $\dim\sympol=\dim K$, 
\item $\overline K=\RR_{\ge 0}p_{K^\times}(\sympol)$, so $\sympol$ generates the cone $K$ up to invertible elements,
\item $Z$ is smooth.
\end{enumerate}
\end{assumptions}

\begin{remark} 
\begin{enumerate}
\item We necessarily have $\sympol \subset K$. By assumption (4) above, $\sympol+K^\times$ contains $0$. We may thus assume without loss of generality that $0\in\sympol$ by multiplying $f$ with a suitable invertible element if neccessary (leaving $Z$ unchanged).
\item Note that if assumpton (2) above is violated then $Z$ splits as a product $Z_1\times Z_2$ where $\dim Z_1=\dim\sympol$, $Z_1$ has the same Newton polytope as $Z$ and $Z_2$ is isomorphic to $(\CC^*)^{a'}\times \CC^{b'}$ for suitable $a',b'$. Since $(S^1)^{a'}$ is a skeleton for $(\CC^*)^{a'}\times \CC^{b'}$, imposing assumption (2) loses no generality.
\item In the case where $A$ is smooth, note that assumption (3) above can always be achieved by a linear coordinate transformation of $A$.
\end{enumerate}
\end{remark}

\begin{example}
Let $M=\bZ^{n+1}.$
If $K = M_\bR$, then $A = (\bC^*)^{n+1}.$
If $a+b = n+1$ and $K = \bR^a\times \bR_{\geq 0}^b,$ then $A =
(\bC^*)^a\times \bC^b$. 
For an example of a singular ambient variety, take $n=1$ and put
$K  = \{x\geq |y|\}\subset \bR^2.$
Then $A = \bC^2/\bZ_2.$
\end{example}

As in equation \eqref{eq:f}, we assume that $\sympol$ is equipped with a lattice triangulation $\cT_\sympol$ and that
\begin{equation}
\label{eq:f2}
f = a_0 + \sum_{m \in \cT^{[0]}} a_m z^m.
\end{equation}
As for the previous sections, 
we assume $0\in \cT_{\sympol}^{[0]}$, $a_0\in\RR_{<0}$ and $a_m\in\RR_{>0}$ for $m\neq 0$ and that these coefficients are generic in the sense of Remark~\ref{remgeneric}.
We also assume to have a convex piecewise linear function $h:\bR_{\geq 0}\sympol\rightarrow \bR$
taking non-negative integral values on $M$ such that the maximal dimensional simplices in $\cT_\sympol$ coincide with the non-extendable closed domains of linearity of $h|_\sympol$.

\subsection{The general definition of the skeleton}
As before, let $\cT$ denote the subset of $\cT_\sympol$ of the cells not containing $0$.
Let $\PS'$ denote the union of the cells in $\cT$ and for $x\in\PS'$, let $\tau_x$ denote the smallest cell of $\cT$ containing $x$.  Recall $S_{\sympol,\cT}$ from Definition \ref{def:skeleton}
\[
S_{\sympol,\cT}=\{(x,\phi)\in\PS'\times\Hom(M,S^1)\,|\, \phi(v)=1\hbox{ whenever }v\hbox{ is a vertex of }\tau_x\}.
\]
For $x\in K$, we denote by $K_{x}$ the smallest face of $K$ containing $x$.

\begin{definition}
\label{def:skeleton5}
Let $S_{\sympol,\cT,K}$ denote the quotient of $S_{\sympol,\cT}$ by the equivalence relation $\sim$ given by
\[
(x,\phi)\sim(x',\phi')\iff x=x'\hbox{ and } \phi|_{K_{x}\cap M}= \phi'|_{K_{x}\cap M}
\]
\end{definition}

The goal is to show that $S_{\sympol,\cT,K}$ embeds in $Z$ as a deformation retract.

\begin{example}
 \label{ex:ccskeleton}
 Let us go back to the setup of Example \ref{ex:generalskeleton}.
 Recall that we have $K = \bR_{\geq 0}^2.$  Then $S_{\sympol,\cT,K}$ is a quotient of $S_{\sympol,\cT}$
 as in Definition \ref{def:skeleton5}.  The quotient is only nontrivial for $x=b$ and $x=e.$  For $x=b,$ $K_x$
 is the $x$-axis, and therefore $\beta\sim \beta'$ in $G_{\{b\}}$ (see Example \ref{ex:c*c*skeleton}), meaning
 the two circles are contracted to two points.  The same happens when $x=e$.  As a result, four of the
 five circles in the bouquet that is $S_{\sympol,\cT}$ are contracted, and $S_{\sympol,\cT,K}$ is homotopy
 equivalent to a single circle.
 
 As a reality check, we give an explict description of the geometry of this hypersurface, and verify that it does have the expected homotopy type.
 Solving   for $y$ in the equation $x^2 + xy + y^2 = 1$, presents the solution space as a branched cover of the $x$-plane with two branch points $x = \pm \frac{2}{\sqrt{3}}$.     
That space retracts to a $2:1$ cover of the line segment between the points, branched at the ends:  a circle. From this analysis it becomes clear that the restriction of the hypersurface to the algebraic torus $(\bC^*)^2$ removes
 the four points $(0,\pm 1)$ and $(\pm 1,0)$, which up to homotopy adds four circles to the hypersurface. This confirms that the calculation of the skeleton $S_{\sympol,\cT}$ contained in Example \ref{ex:c*c*skeleton} is correct.
\end{example}

\subsection{Construction of the ambient degeneration} 
The construction of the degeneration in the general case is not different from the previous. For completeness, we repeat it here.
Recall the notation $\tiM=M\oplus\ZZ, \tiM_\RR=\tiM\otimes_\ZZ\RR$.
As in Section~\ref{sec:tordegcompact}, we define the non-compact polyhedron
$$\overline\Gamma=\{(m,r)\,|\,m\in\sympol,r\ge h(m)\}\subset \tiM_\RR$$
and $\overline Y$ be the toric variety given by the normal fan of $\overline\Gamma$.
We may set $\Gamma_{\ge h}=\RR_{\ge0}\overline\Gamma$ and find the affine chart
$$Y=\Spec\CC[\Gamma_{\ge h}\cap \tiM]$$
on which we have the two regular functions $t=z^{(0,1)}$ and $\tilde f=\sum_{m\in\sympol}a_mz^{(m,h(m))}$.
In fact, $t$ extends to a regular function on $\overline Y$.
Let $X=V(\tilde f)$ denote the affine hypersurface cut out by $\tilde f$ in $Y$
and let $\overline X$ denote its closure in $\overline Y$. We restrict $t$ to a regular function on $\overline X$.

The following lemma elucidates the relation between $\overline\Gamma$ and $K$.
\begin{lemma} 
\label{lem:KGamma}
We have an inclusion preserving bijection
$$\{\hbox{faces of }\overline\Gamma\hbox{ containing }\overline\Gamma\cap(K^\times\times\RR)\}
\longleftrightarrow
\{\hbox{faces of }K\}$$
by sending a face $G$ on the left hand side to $(\RR_{\ge0}G+(K^\times\times\RR))\cap M_\RR$ on the right.
\end{lemma}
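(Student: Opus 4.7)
The plan is to reduce the statement to a classical correspondence between faces of $\sympol$ containing $\sympol\cap K^\times$ and faces of $K$, by analyzing both sides of the claimed bijection.

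First I would identify the faces of $\overline\Gamma$ that lie on the left-hand side. The set $\overline\Gamma\cap(K^\times\times\RR)$ contains the ray $\{0\}\times[h(0),\infty)$, so any face of $\overline\Gamma$ containing it must have $(0,1)$ in its recession cone and hence must be unbounded. The unbounded faces of $\overline\Gamma$ are in bijection with faces of $\sympol$ via $\sigma\mapsto F_\sigma:=\{(m,r):m\in\sigma,\ r\ge h(m)\}$: a linear functional $(m,r)\mapsto\ell(m)+cr$ is bounded below on $\overline\Gamma$ only when $c\ge 0$, and $c=0$ selects exactly such an $F_\sigma$ with $\sigma$ the locus where $\ell$ is minimal on $\sympol$. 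The containment $F_\sigma\supset\overline\Gamma\cap(K^\times\times\RR)$ is then equivalent to $\sigma\supset\sympol\cap K^\times$, and $\sympol\cap K^\times$ itself is a face of $\sympol$ since it equals the intersection of $\sympol$ with the common zero locus of the linear forms defining $K$, each of which restricts to a nonnegative form on $\sympol\subset K$.

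Next I would evaluate the map on such an $F_\sigma$. After normalizing $h(0)=0$, the function $h$ is positively homogeneous of degree one on each cone of $\Sigma_\cT$, so $\RR_{\ge 0}F_\sigma=\{(m,r):m\in\RR_{\ge 0}\sigma,\ r\ge h(m)\}$. Adding $K^\times\times\RR$ enlarges the $M_\RR$-component by $K^\times$ and frees the $r$-component entirely, so the intersection with $M_\RR=M_\RR\times\{0\}$ collapses to
\[
(\RR_{\ge 0}F_\sigma+(K^\times\times\RR))\cap M_\RR=\RR_{\ge 0}\sigma+K^\times.
\]
Thus the lemma reduces to showing that $\sigma\mapsto\RR_{\ge 0}\sigma+K^\times$ is an inclusion-preserving bijection from faces of $\sympol$ containing $\sympol\cap K^\times$ onto faces of $K$.

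To prove this remaining bijection I would pass to the strictly convex quotient cone $\overline K=K/K^\times$. Every face of $K$ contains the minimal face $K^\times$, so faces of $K$ correspond bijectively and inclusion-preservingly to faces of $\overline K$ via quotienting by $K^\times$. By Assumption~\ref{assumptions}(3), $\overline K=\RR_{\ge 0}\overline\sympol$ where $\overline\sympol=p_{K^\times}(\sympol)$ is a polytope containing $0$, and the standard cone-over-polytope correspondence gives an inclusion-preserving bijection between faces of $\overline K$ and faces of $\overline\sympol$ containing $0$, sending $\bar\tau$ to $\RR_{\ge 0}\bar\tau$. The central technical step, which I expect to be the main obstacle, is showing that the projection $p_{K^\times}$ restricts to an inclusion-preserving bijection between faces of $\sympol$ containing $\sympol\cap K^\times$ and faces of $\overline\sympol$ containing $0$: one must lift a supporting hyperplane of $\overline\sympol$ through $0$ to a supporting hyperplane of $\sympol$ through $\sympol\cap K^\times$ (and conversely descend supporting hyperplanes), invoking Assumption~\ref{assumptions}(3) to ensure that linear forms cutting out the relevant faces of $\sympol$ may be chosen to vanish on all of $K^\times$ rather than merely on $\sympol\cap K^\times$. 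Once this compatibility of face structures is established, inclusion preservation and well-definedness in both directions follow formally from the construction, yielding the lemma.
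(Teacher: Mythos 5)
Your approach is genuinely different from the paper's one-line argument, which identifies $K\times\RR$ with the localization $\overline\Gamma+(K^\times\times\RR)$ of $\overline\Gamma$ and invokes the standard correspondence between faces containing a fixed face and faces of the localization; you instead try to push everything down to a face correspondence for the polytope $\sympol$. Your opening reductions are correct and cleanly stated: the faces of $\overline\Gamma$ containing $\overline\Gamma\cap(K^\times\times\RR)$ are exactly the unbounded faces $F_\sigma=\{(m,r):m\in\sigma,\ r\ge h(m)\}$ with $\sigma$ a face of $\sympol$ containing $\sympol\cap K^\times$, and the Lemma's map sends $F_\sigma$ to $\RR_{\ge0}\sigma+K^\times$. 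The problem is the "central technical step" you flag and leave unproven. You propose to prove that $p_{K^\times}$ induces a bijection between faces of $\sympol$ containing $\sympol\cap K^\times$ and faces of $\overline\sympol$ containing $0$ by choosing supporting functionals "vanishing on all of $K^\times$ rather than merely on $\sympol\cap K^\times$," citing Assumption \ref{assumptions}(3). But Assumption (3) only constrains the image $p_{K^\times}(\sympol)$; it says nothing about whether $\sympol\cap K^\times$ spans $K^\times$, which is exactly what your argument needs.

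When $\sympol\cap K^\times$ does not span $K^\times$, the step fails. Take $K=\RR^2\times\RR_{\ge0}$ (so $K^\times=\RR^2\times\{0\}$) and $\sympol=\conv\{(1,0,0),(-1,0,0),(0,1,1),(0,-1,1)\}$. Assumptions (1)--(3) hold, $\sympol\cap K^\times$ is the one-dimensional segment from $(-1,0,0)$ to $(1,0,0)$, and four faces of $\sympol$ contain it: that segment, the two facets $\conv\{(1,0,0),(-1,0,0),(0,1,1)\}$ and $\conv\{(1,0,0),(-1,0,0),(0,-1,1)\}$, and $\sympol$. Yet $\overline\sympol=p_{K^\times}(\sympol)=[0,1]$ has only two faces containing $0$; both nontrivial facets project onto all of $[0,1]$ (and both have $\RR_{\ge0}\sigma+K^\times=K$), so the proposed correspondence is not even injective. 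The functionals cutting out those two facets ($y-z$ and $y+z$) do not vanish on $K^\times$, and cannot be modified to do so without cutting out all of $\sympol$. The correspondence you want holds only for faces cut out by functionals vanishing on $K^\times$ --- equivalently, for faces of the localization $\overline\Gamma+(K^\times\times\RR)$ --- which is precisely what the paper's argument tracks and your reduction to face data of the bare polytope $\sympol$ loses. As written, the step cannot be completed, so the proof has a genuine gap.
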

\begin{proof} Faces of $K$ are in inclusion-preserving bijection with faces of $K\times\RR$ and the latter coincides with the localization $\overline\Gamma+(K^\times\times\RR)$ of $\overline\Gamma$ by Assumptions~\ref{assumptions}(3).
\end{proof}

\subsection{The non-standard log structure}
Let $D$ denote the complement of the open torus in $\overline Y$. Then $D$ is a toric divisor in $\overline Y$.
In Section~4, we used the standard toric log structure $\shM_{\overline Y}=\shM_{(\overline Y,D)}$ on the toric variety $\overline Y$ (Section~\ref{subsubsec:logtoric}), which eventually led to an embedding
of $S_{\sympol,\cT}\subset (\overline Y_{0,\log})_1$ as a deformation retract. To indicate that $\overline Y_\log$ is defined using the log structure $\shM_{\overline Y}$, we denote it from now on by
$\overline Y(\shM_{\overline Y})_\log$. 

We now construct another log structure $\shF_{\overline Y}$ on $\overline Y$. 
For this we specify a reduced toric divisor $D_\shF\subset \overline Y$, i.e. $D_\shF\subseteq D$, and we then define $\shF_{\overline Y}$ as the divisorial log structure with respect to $D_\shF$.
Recall that the components of $D$ correspond to the facets of $\overline\Gamma$. To define $D_\shF$, we need to pick a subset of these facets.
\begin{definition} 
We let $D_\shF\subset\overline Y$ be the reduced toric divisor whose components correspond to the facets of $\overline\Gamma$ that do not contain the face $(K^\times\times\RR) \cap\overline\Gamma$ and define $\shF_{\overline Y}=\shM_{(\overline Y,D_\shF)}$.
\end{definition}

We want to describe the stalks of $\shF_{\overline Y}$ explicitly. Let $F_1,...,F_r$ be an enumeration of the facets of $\overline\Gamma$ containing $(K^\times\times\RR) \cap\overline\Gamma$. 
For a face $G\subseteq\overline\Gamma$, we denote by $\langle G,F_i\rangle$, the smallest face of $\overline\Gamma$ containing $G$ and $F_i$, i.e.
$$\langle G,F_i\rangle=\left\{\begin{array}{ll} F_i&\hbox{if }G\subseteq F_i,\\ \overline\Gamma&\hbox{otherwise.}\end{array}\right.$$
We define $F_G:=\bigcap_{i=1}^r\langle G,F_i\rangle.$
For faces $G_1,G_2\subseteq\overline\Gamma$ with $G_1\subseteq G_2$, we have $F_{G_1}\subseteq F_{G_2}$.
\begin{lemma} 
\label{lem:FGequiv}
We have
$F_G=\bigcap_{G\subseteq F_i}F_i=\langle\overline\Gamma\cap (K^\times\times\RR),G\rangle$
\end{lemma}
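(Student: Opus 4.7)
My plan is to prove the two equalities separately, starting with the easier first one and then handling the second by an appeal to the standard fact that any face of a polyhedron is the intersection of the facets containing it.

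For the first equality $F_G = \bigcap_{G \subseteq F_i} F_i$, I would simply unwind the definition $\langle G, F_i\rangle = F_i$ if $G \subseteq F_i$ and $\langle G, F_i\rangle = \overline\Gamma$ otherwise (which is the explicit case analysis spelled out just before the lemma). The terms with $G \not\subseteq F_i$ contribute $\overline\Gamma$ to the intersection and may be dropped, yielding the first equality immediately. If no $F_i$ contains $G$ the intersection is empty and by convention equals $\overline\Gamma$, which is consistent since in that case $\langle \overline\Gamma\cap(K^\times\times\RR),G\rangle = \overline\Gamma$ as well.

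For the second equality, write $L = \overline\Gamma \cap (K^\times\times\RR)$, so that the $F_i$ are precisely the facets of $\overline\Gamma$ containing $L$. The face $\langle L,G\rangle$ contains both $L$ and $G$, hence any facet $F_i$ with $G \subseteq F_i$ contains $L$ and $G$, hence contains the smallest such face $\langle L,G\rangle$. This gives $\langle L,G\rangle \subseteq \bigcap_{G \subseteq F_i} F_i$. For the reverse inclusion I invoke the standard polyhedral fact that every proper face of $\overline\Gamma$ equals the intersection of the facets of $\overline\Gamma$ that contain it. Applied to $\langle L,G\rangle$, this intersection ranges over all facets $F$ with $\langle L,G\rangle \subseteq F$; since any such $F$ automatically contains $L$, it is one of the $F_i$, and since it contains $G$ it is one of the $F_i$ appearing in $\bigcap_{G \subseteq F_i} F_i$. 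Conversely every $F_i$ in this latter intersection contains $\langle L,G\rangle$ by the previous step. Thus the two index sets coincide and so do the two intersections.

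The only subtlety is the degenerate case $\langle L,G\rangle = \overline\Gamma$ (no proper facet contains it): in that case no $F_i$ contains $G$ (because any $F_i$ containing $G$ would contain $\langle L,G\rangle = \overline\Gamma$, which is impossible for a facet), so the indexed intersection is empty and equals $\overline\Gamma$ by convention, matching $\langle L,G\rangle$. I do not expect any real obstacle here; the main step is simply the invocation of the face-intersection property of polyhedra, together with the observation that the condition ``facet contains $L$'' is automatic once the facet contains any face containing $L$.
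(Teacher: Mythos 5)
Your proof is correct, and since the paper states this lemma without proof (evidently regarding it as elementary), your argument is the natural one the authors would have had in mind. The first equality is a direct unwinding of the definition of $\langle G, F_i\rangle$, and the second follows from the standard fact that a face of a polyhedron is the intersection of the facets containing it, together with the key observation that a facet containing $L=\overline\Gamma\cap(K^\times\times\RR)$ and $G$ must contain $\langle L,G\rangle$ and vice versa; your handling of the degenerate case $\langle L,G\rangle = \overline\Gamma$ is also correct.
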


We can now identify the stalks of $\shF_{\overline Y}$.
\begin{lemma}
\label{lem:stalksF}
Let $y\in\overline Y$ be a point and $G\subseteq\overline\Gamma$ be the face that corresponds to the torus orbit that contains $y$.
We have 
$$\shF_{\overline Y,y}=(\tiM\cap\RR_{\ge0}(F_G-G))\otimes_{(\tiM\cap\RR_{\ge0}(F_G-G))^\times}\shO^\times_{\overline Y,y}$$ 
and this is a face of 
$$\shM_{\overline Y,y}=(\tiM\cap\RR_{\ge0}(\overline\Gamma-G))\otimes_{(\tiM\cap\RR_{\ge0}(\overline\Gamma-G))^\times}\shO^\times_{\overline Y,y}.$$
\end{lemma}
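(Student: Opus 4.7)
The plan is to work locally around $y$ in the affine toric chart for $G$ and realize both log structures as the log structures associated to explicit monoidal sub-charts. Since $y\in O_G$, an affine toric open neighborhood of $y$ is $U_G=\Spec\CC[\tiM\cap \sigma_G^\vee]$, where $\sigma_G^\vee=\RR_{\geq 0}(\overline\Gamma-G)$ is the tangent cone of $\overline\Gamma$ along $G$. The second formula then coincides with the standard description of the stalks of the toric divisorial log structure $\shM_{\overline Y}=\shM_{(\overline Y, D)}$: it is the log structure associated to the chart $\tiM\cap\sigma_G^\vee\to\shO_{U_G}$ (cf.\ \cite[Example~2.6]{Ka96}), and taking stalks yields the claimed pushout.

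For the first formula I claim that $\shF_{\overline Y}$ is, near $y$, the log structure associated to the sub-chart $Q:=\tiM\cap\RR_{\geq 0}(F_G-G)\hookrightarrow\tiM\cap\sigma_G^\vee\to\shO_{U_G}$. Since $D_\shF\subseteq D$ we have $\shF_{\overline Y, y}\subseteq\shM_{\overline Y, y}$, so by the second formula every $f\in\shF_{\overline Y, y}$ has the form $f=u\cdot z^{(m,r)}$ with $u\in\shO^\times_{\overline Y, y}$ and $(m,r)\in\tiM\cap\sigma_G^\vee$. The facets of $\sigma_G^\vee$ correspond to facets $F\subset\overline\Gamma$ containing $G$; the dual facet to such an $F$ is $\RR_{\geq 0}(F-G)$, and $z^{(m,r)}$ is a unit along the corresponding toric prime divisor iff $(m,r)$ lies in that facet. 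Therefore $f\in\shF_{\overline Y, y}$ iff the zero locus of $z^{(m,r)}$ is contained in $D_\shF$, iff $(m,r)\in\RR_{\geq 0}(F_i-G)$ for every $F_i$ through $G$ (the toric prime divisors \emph{not} in $D_\shF$). The key step is the cone-duality identification
\[
\bigcap_{i:\,G\subseteq F_i}\RR_{\geq 0}(F_i-G)=\RR_{\geq 0}(F_G-G),
\]
obtained as follows: by Lemma~\ref{lem:FGequiv} the facets of $\overline\Gamma$ through $F_G$ are exactly the $F_i$'s with $G\subseteq F_i$, so the subcone $\sigma_{F_G}\subseteq\sigma_G$ of the normal fan is generated by the corresponding rays, and the dual face of $\sigma_G^\vee$ is on one hand $\bigcap_i\RR_{\geq 0}(F_i-G)$ and on the other the tangent cone $\RR_{\geq 0}(F_G-G)$. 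Hence $\shF_{\overline Y, y}=Q\otimes_{Q^\times}\shO^\times_{\overline Y, y}$.

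Finally, since $F_G$ is a face of $\overline\Gamma$ containing $G$, the same cone-duality argument shows that $\RR_{\geq 0}(F_G-G)$ is a face of $\sigma_G^\vee$; intersecting with $\tiM$ preserves the face relation of monoids, and the pushout $(-)\otimes_{(-)^\times}\shO^\times$ carries monoid faces to faces, yielding the last claim of the lemma. The main obstacle I anticipate is the cone-duality identification of $\bigcap_i\RR_{\geq 0}(F_i-G)$ with the tangent cone of $F_G$ at $G$; this is standard but must be handled carefully, and reduces via Lemma~\ref{lem:FGequiv} to the combinatorial statement that $F_G$ is the intersection of the facets through $G$ that also contain $\overline\Gamma\cap(K^\times\times\RR)$.
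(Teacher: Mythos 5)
Your proposal is correct and takes essentially the same approach as the paper's proof: both work in the affine chart for $G$, observe that (up to units) $\shF_{\overline Y,y}$ is generated by the monomials that do not vanish along the divisors $F_i$ through $G$, and identify these with $\tiM\cap\RR_{\geq 0}(F_G-G)$ via the inclusion-preserving face bijection and Lemma~\ref{lem:FGequiv}. The paper states this cone-duality step tersely, while you spell it out; the content is the same.
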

\begin{proof} On the chart $\tiM\cap\RR_{\ge0}(\overline\Gamma-G)\ra\CC[\tiM\cap\RR_{\ge0}(\overline\Gamma-G)]$ of the log structure $\shM_{\overline Y}$, 
the subsheaf $\shF_{\overline Y}$ up to invertible elements is generated by the those monomials that do not vanish on the divisors corresponding to $F_1,...,F_r$, i.e. precisely the monomials contained in $\RR_{\ge 0}(F_G-G)$. Moreover, $\RR_{\ge 0}(F_G-G)$ is clearly a face of $\RR_{\ge 0}(\overline\Gamma-G)$.
\end{proof}

The log structure $\shF_{\overline Y}$ will in general not be coherent. However we have the following replacement:

\begin{proposition} 
\label{lem:relcoherent}
The log structure $\shF_{\overline Y}$ is \emph{relatively coherent} in $\shM_{\overline Y}$ in the sense of \cite[Def.\,3.6,\,1.]{NO10}.
\end{proposition}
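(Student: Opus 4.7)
The plan is to exhibit, for each point $y \in \overline Y$, a local relative chart in the sense of \cite{NO10} witnessing the claim. Concretely, I will show that on a canonical toric affine neighborhood $U_y$ of $y$ the log structure $\shF_{\overline Y}|_{U_y}$ is obtained (as a sub-log structure of $\shM_{\overline Y}|_{U_y}$) by restricting the chart of $\shM_{\overline Y}$ to a distinguished face determined by the data $F_G$ of Lemma \ref{lem:stalksF}.

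First, I would set up the toric chart. Let $G$ be the face of $\overline\Gamma$ corresponding to the torus orbit through $y$, let $U_y := \Spec\CC[\tiM \cap \RR_{\ge0}(\overline\Gamma - G)]$, and let $P := \tiM \cap \RR_{\ge0}(\overline\Gamma - G)$ with its canonical chart $\beta : P \to \shO_{U_y}$ of $\shM_{\overline Y}|_{U_y}$. Since $F_G$ is a face of $\overline\Gamma$ containing $G$, the cone $\RR_{\ge0}(F_G - G)$ is a face of $\RR_{\ge0}(\overline\Gamma - G)$, so the submonoid $Q := \tiM \cap \RR_{\ge0}(F_G - G)$ is a face of $P$; the composition $\beta|_Q : Q \to \shO_{U_y}$ is the relative pre-chart candidate.

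The main step is to check that $\shF_{\overline Y}|_{U_y}$ agrees stalk-wise with the log structure associated to $\beta|_Q$. Fix $y' \in U_y$ lying in the orbit of a face $G' \supseteq G$. The key observation is the identity $F_{G'} = \langle F_G, G' \rangle$, which follows directly from Lemma \ref{lem:FGequiv}: both sides equal the smallest face of $\overline\Gamma$ containing $G'$ and $\overline\Gamma \cap (K^\times \times \RR)$, using that $F_G$ already contains $\overline\Gamma\cap(K^\times\times\RR)$. From this identity one extracts that the image of $Q$ in the quotient $P / (P \cap \RR(G'-G))$ coincides with the image of $\tiM \cap \RR_{\ge0}(F_{G'} - G')$ modulo its units. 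The units at $y'$ correspond precisely to $P \cap \RR(G'-G)$, and comparing with Lemma \ref{lem:stalksF} applied at $y'$ shows that the associated log structure to $\beta|_Q$ at $y'$ equals $\shF_{\overline Y, y'}$.

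The principal obstacle — and the reason one needs the weaker notion of relative coherence rather than coherence — is that the face $Q \subset P$ is adapted to the stratum of $y$: as one passes to more generic points $y'$ in $U_y$, the face $F_{G'}$ can strictly enlarge $F_G$, so $Q$ fails to be a chart for $\shF_{\overline Y}$ in the fine sense. However, the identity $F_{G'} = \langle F_G, G'\rangle$ ensures that the additional generators always arise from elements of $\RR(G' - G)$, which become units at $y'$ and therefore lie in the image of the units sheaf rather than requiring new monoid generators. This compatibility is exactly what the definition of relative coherence demands, completing the argument.
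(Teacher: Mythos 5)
Your proof is correct, but it takes a noticeably different and more elaborate route than the paper's. The paper's proof is a one-liner: it reads NO10, Def.~3.6(1) as saying that $\shF_{\overline Y}$ must be a sheaf of faces in $\shM_{\overline Y}$ (i.e.\ each stalk $\shF_{\overline Y,y}$ must be a face of $\shM_{\overline Y,y}$), and then observes that this is exactly the content of Lemma~\ref{lem:stalksF}, which was already proved. You instead construct an explicit local ``relative chart'': on the toric affine neighborhood $U_y$, you take the canonical chart $P = \tiM \cap \RR_{\ge0}(\overline\Gamma - G)$ of $\shM_{\overline Y}$, identify $Q = \tiM \cap \RR_{\ge0}(F_G - G)$ as a face of $P$, and verify stalk-by-stalk over $U_y$ that the associated log structure of $\beta|_Q$ recovers $\shF_{\overline Y}$, using the identity $F_{G'} = \langle F_G, G'\rangle$ for $G \subseteq G'$ (a clean consequence of Lemma~\ref{lem:FGequiv} that the paper does not spell out). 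This chart-level verification is genuinely more work than what the paper records, and in effect re-derives the stalk computation of Lemma~\ref{lem:stalksF} with the added information of how the face data propagates between strata within a single chart. What your argument buys is an explicit witness for relative coherence in a chart-by-chart sense, and it makes transparent the phenomenon you describe in the final paragraph — that $F_{G'}$ can strictly enlarge $F_G$ over more generic $y'$, yet the extra generators become units — which is left implicit in the paper. On the other hand it is longer, and under the paper's reading of NO10 (relative coherence $=$ sheaf of faces, checked stalk-wise) most of your work is unnecessary once Lemma~\ref{lem:stalksF} is in hand. Both arguments are sound; yours is the more self-contained of the two.
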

\begin{proof} 
This just states that $\shF_{\overline Y}$ is a sheaf of faces in $\shM_{\overline Y}$ which is Lemma~\ref{lem:stalksF}.
\end{proof}

Let $\shF_{\overline X}$ (resp. $\shM_{\overline X}$) denote the pullback of the log structure $\shF_{\overline Y}$ (resp. $\shM_{\overline X}$) to $\overline X$.
\begin{corollary} The log structure $\shF_{\overline X}$ is \emph{relatively coherent} in $\shM_{\overline X}$.
\end{corollary}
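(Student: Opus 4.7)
The plan is to reduce this corollary to Proposition~\ref{lem:relcoherent}, which has already established that $\shF_{\overline Y}$ is a sheaf of faces in $\shM_{\overline Y}$. Since being relatively coherent in the sense of \cite[Def.~3.6]{NO10} means exactly being a sheaf of faces --- a condition checked stalk by stalk --- it suffices to show that the face relation between $\shF_{\overline Y}$ and $\shM_{\overline Y}$ is preserved under pullback along the closed inclusion $u:\overline X \hra \overline Y$.

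First I would recall the stalk-level description of pullback log structures. For $x\in \overline X$ mapping to $y = u(x) \in \overline Y$, both $\shM_{\overline X}$ and $\shF_{\overline X}$ are, by Definition~\ref{def:logpullback}, the log structures associated to the pre-log structures $u^{-1}\shM_{\overline Y}\ra\shO_{\overline X}$ and $u^{-1}\shF_{\overline Y}\ra \shO_{\overline X}$ respectively. Their stalks admit the pushout descriptions
\[
\shM_{\overline X,x} \;=\; \shM_{\overline Y,y}\oplus_{\shO^\times_{\overline Y,y}}\shO^\times_{\overline X,x},\qquad
\shF_{\overline X,x} \;=\; \shF_{\overline Y,y}\oplus_{\shO^\times_{\overline Y,y}}\shO^\times_{\overline X,x},
\]
(the second pushout is well-defined because $\shO^\times_{\overline Y,y}\subseteq \shF_{\overline Y,y}$, since any log structure contains the units), and the natural map $\shF_{\overline X,x}\hra \shM_{\overline X,x}$ is induced from $\shF_{\overline Y,y}\hra \shM_{\overline Y,y}$.

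Next I would verify that this inclusion realizes $\shF_{\overline X,x}$ as a face of $\shM_{\overline X,x}$. The cleanest route is to pass to ghost sheaves: both pushouts collapse the unit group $\shO^\times_{\overline Y,y}$ against $\shO^\times_{\overline X,x}$, producing canonical identifications
\[
\overline{\shM}_{\overline X,x} \;=\; \overline{\shM}_{\overline Y,y},\qquad
\overline{\shF}_{\overline X,x} \;=\; \overline{\shF}_{\overline Y,y}.
\]
By Lemma~\ref{lem:stalksF} the right-hand sides form a face inclusion, hence so do the left-hand sides. Since a submonoid $N$ of an integral monoid $P$ containing $P^\times$ is a face of $P$ if and only if its image in $P/P^\times$ is a face, this gives that $\shF_{\overline X,x}$ is a face of $\shM_{\overline X,x}$, as required. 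The main (and essentially only) point to watch is that this pushout/ghost-sheaf compatibility is set up correctly; no deeper obstacle is expected, since the statement is a standard functoriality of the face relation under pullback of log structures.
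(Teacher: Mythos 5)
Your proof is correct and spells out exactly the argument the paper leaves implicit: since $\overline{X}\hookrightarrow\overline{Y}$ is a closed immersion, the pushout description of the pullback log structure collapses to an identification of ghost stalks $\overline{\shM}_{\overline X,x}\cong\overline{\shM}_{\overline Y,y}$ and $\overline{\shF}_{\overline X,x}\cong\overline{\shF}_{\overline Y,y}$, so the face relation of Lemma~\ref{lem:stalksF} transfers verbatim. The reduction of ``face'' to ``face of the ghost monoid'' via the unit group is the right way to see this, and the paper's unwritten justification is precisely this stalkwise observation.
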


\subsection{Relative log smoothness}
Note that $t=z^{(0,1)}$ is a global section of $\shF_{\overline Y}$ since all $F_i$ contain $(0,1)$. Thus, by mapping the generator of $\ZZ_{\ge 0}$ to $t$, we obtain a map of log spaces
$\overline \pi^\dag:(\overline Y,\shF_{\overline Y})\ra (\AA^1,\shM_{\AA^1})$. Moreover the inclusion $\shF_{\overline Y}\subseteq\shM_{\overline Y}$ induces a map $g^{\overline Y}$ so that we have the sequence of maps of log spaces
$$(\overline Y,\shM_{\overline Y})\stackrel{g^{\overline Y}}{\lra} (\overline Y,\shF_{\overline Y})\stackrel{\overline \pi^\dag}{\lra} (\AA^1,\shM_{\AA^1})$$
and we know that the composition $\overline \pi^\dag\circ g^{\overline Y}$ is log smooth by an analogue of Lemma~\ref{lem:smooth2} for $\overline Y$. Recall from \cite[Def.\,3.6,\,2.]{NO10} the definition of a relatively log smooth map.

\begin{lemma} 
\label{lem:logsmY}
If $A$ is smooth, then the map $\overline \pi^\dag$ is relatively log smooth. If $A$ is not smooth, then $\overline \pi^\dag$ is relatively log smooth away from the closure of the torus orbit in $\overline Y$ corresponding to $(0\times\RR)\cap\overline \Gamma$.
\end{lemma}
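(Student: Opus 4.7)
The plan is to verify relative log smoothness stalkwise using the explicit charts from Lemma~\ref{lem:stalksF}. Fix $y\in\overline Y$ in the torus orbit of a face $G\subseteq\overline\Gamma$; then $\shF_{\overline Y}$ is charted at $y$ by the face $P := \tiM\cap\RR_{\ge 0}(F_G-G)$ of the chart $Q := \tiM\cap\RR_{\ge 0}(\overline\Gamma-G)$ of $\shM_{\overline Y}$. Because $(0,1)$ lies in every facet $F_i$ appearing in the definition of $F_G$ (the function $t = z^{(0,1)}$ extends regularly across each such toric divisor), we have $(0,1)\in F_G\subseteq P$, and the chart $\ZZ_{\ge 0}\to\shM_{\AA^1}$ sending $1\mapsto(0,1)$ lifts to a chart $\ZZ_{\ge 0}\to P$ of $\overline\pi^\dag$ compatible with the inclusion $\shF_{\overline Y}\subseteq \shM_{\overline Y}$.

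Next I would invoke the relative analogue of K.~Kato's toric criterion (\cite[Thm.\,4.1]{Ka96}), packaged for relatively coherent sheaves of faces in \cite[\S3]{NO10}: at $y$, relative log smoothness of $\overline\pi^\dag$ follows as soon as the monoid map $\theta:\ZZ_{\ge 0}\to P$, $1\mapsto(0,1)$, is injective and saturated, and $P^\gp/\ZZ\cdot(0,1)$ is torsion-free. Saturation of $P$ and injectivity of $\theta$ are immediate from the polyhedral construction of $P$. The essential content is therefore torsion-freeness of $P^\gp/\ZZ(0,1)$, which by Lemmas~\ref{lem:FGequiv} and \ref{lem:KGamma} is isomorphic to the lattice attached to a face of $K$. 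When $A$ is smooth, $(K,M)\cong(\RR^a\times\RR_{\ge 0}^b,\ZZ^{a+b})$, so every face of $K$ is generated by part of a $\ZZ$-basis of $M$ and torsion-freeness holds at every $y$. In the isolated-singularity case, the same analysis goes through except when the projected face is all of $K$; by Lemma~\ref{lem:KGamma} this occurs precisely when $G\supseteq\overline\Gamma\cap(0\times\RR)$, i.e.\ when $y$ lies in the closure of the torus orbit corresponding to $\overline\Gamma\cap(0\times\RR)$.

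The main obstacle is bridging the abstract definition of \emph{relatively log smooth} from \cite[Def.\,3.6]{NO10} to the concrete toric picture: the key subtlety is that $\shF_{\overline Y}$ is only relatively coherent rather than honestly coherent, so one must be careful when invoking Kato's toric-chart criterion. Once the appropriate form of that criterion is established using Proposition~\ref{lem:relcoherent}, the combinatorial input from Lemmas~\ref{lem:stalksF}, \ref{lem:FGequiv}, and \ref{lem:KGamma} completes the verification.
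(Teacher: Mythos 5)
The key criterion you invoke is not the right one, and this creates a genuine gap. Relative log smoothness in the sense of \cite[Def.\,3.6,\,2.]{NO10} is not verified by applying Kato's toric criterion to the pair $\bigl((\overline Y,\shF_{\overline Y}),(\AA^1,\shM_{\AA^1})\bigr)$ as if $\shF_{\overline Y}$ were an honest coherent log structure; the defining condition is instead a constraint on the \emph{quotient} $\shM_{\overline Y}/\shF_{\overline Y}$, namely that its stalks be \emph{free} monoids (in addition to relative coherence of $\shF_{\overline Y}$ and ordinary log smoothness of $(\overline Y,\shM_{\overline Y})\to(\AA^1,\shM_{\AA^1})$, both of which are already in hand). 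In your formulation, the stalk of $\shM/\shF$ is the quotient monoid $Q/P$ with $Q=\tiM\cap\RR_{\ge 0}(\overline\Gamma-G)$ and $P=\tiM\cap\RR_{\ge 0}(F_G-G)$, and what must be checked is that $Q/P\cong\ZZ_{\ge 0}^s$; this is precisely what the paper does, translating it into the statement that $\overline Y$ is smooth in a neighborhood of the orbit corresponding to the smallest face containing $F_G$ and $G$, and then reducing to smoothness of $A$ in codimension one.

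Your proposed criterion --- injectivity and saturation of $\theta:\ZZ_{\ge 0}\to P$, $1\mapsto(0,1)$, together with torsion-freeness of $P^{\gp}/\ZZ(0,1)$ --- cannot be correct for an independent reason: it is vacuously satisfied. Here $P^{\gp}$ is a sublattice of $\tiM$, and $(0,1)$ is a primitive vector of $\tiM$ that lies in $P^{\gp}$; hence $P^{\gp}/\ZZ(0,1)$ is a subquotient of $\tiM$ by a primitive direct summand and is always torsion-free, regardless of whether $A$ is smooth. If your criterion were the right one, $\overline\pi^{\dag}$ would be relatively log smooth everywhere for arbitrary $A$, contradicting the second half of the Lemma. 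The subsequent paragraph of your argument (identifying a lattice ``attached to a face of $K$'' and distinguishing cases according to smoothness of $A$) is therefore not actually a consequence of the stated criterion, and the exceptional locus in the singular case does not fall out of it. To repair the proof, replace the torsion-freeness check on $P^{\gp}/\ZZ(0,1)$ by the freeness check on the monoid $Q/P$, and then use Lemma~\ref{lem:KGamma} and the containment $F_I\supset(K^{\times}\times\RR)\cap\Gamma_{\ge h}$ to reduce, as the paper does, to the smoothness of $A$ away from its isolated singular point.
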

\begin{proof} It remains to show that the stalks of $\shM_{\overline Y}/\shF_{\overline Y}$ are free monoids at points for which we claim the map to be relatively log smooth. 
Let $y\in \overline Y$ be a point in a torus orbit corresponding to a face $G\subseteq \overline \Gamma$. 
By Lemma~\ref{lem:stalksF}, we have
$\shM_{\overline Y,y}/\shF_{\overline Y,y}=(\tiM\cap\RR_{\ge0}(\overline\Gamma-G))/(\tiM\cap\RR_{\ge0}(F_G-G))$ and we need to show that this is isomorphic to $\ZZ_{\ge 0}^s$ for some $s$.
This is equivalent to saying that $\overline Y$ is smooth in a neighborhood of the torus orbit corresponding to to the smallest face of $\overline\Gamma$ that contains $F_G$ and $G$.
It suffices to show that for any subset $I\subseteq\{1,...,r\}$, $Y$ is smooth in a neighborhood of the torus orbit corresponding to $F_I:=\Gamma_{\ge h}\cap\bigcap_{i\in I}F_i$ except for the case where $F_I=\{0\}\times\RR_{\ge 0}$ because we make no claim for this by the restrictions made in the assertion in the lemma. 
Note that since $F_I$ contains $(K^\times\times\RR)\cap\Gamma_{\ge h}$, the torus orbit corresponding to $F_I$ is contained in the open subset $A\times\CC^*$ of $\overline Y$, so the statement follows from the smoothness of $A$ in codimension one.
\end{proof}
Note that we also have a sequence of log spaces
$$(\overline X,\shM_{\overline X})\stackrel{g^{\overline X}}{\lra} (\overline X,\shF_{\overline X})\stackrel{\overline \pi^\dag}{\lra} (\AA^1,\shM_{\AA^1})$$
where we abuse notation by denoting the second map as $\overline \pi^\dag$ again.
Again, we know that the composition $g^{\overline X}\circ\overline \pi^\dag$ is log smooth by Lemma~\ref{lem:smooth2}.
When $A$ is singular, note that $\overline X$ is disjoint from the torus orbit in $\overline Y$ corresponding to $(0 \times \RR) \cap \overline\Gamma$, so using Assumptions~\ref{assumptions}\,(4), we conclude the following.
\begin{lemma} 
\label{lem:logsmX}
The map $\overline \pi^\dag:(\overline X,\shF_{\overline X})\ra(\AA^1,\shM_{\AA^1})$ is relatively log smooth.
\end{lemma}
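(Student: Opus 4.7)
The plan is to deduce relative log smoothness of $\overline\pi^\dag:(\overline X,\shF_{\overline X})\to(\AA^1,\shM_{\AA^1})$ by combining the two facts already gathered: log smoothness of the ``full'' composition, established in Lemma~\ref{lem:smooth2}, and the local structure of the quotient sheaf $\shM_{\overline Y}/\shF_{\overline Y}$ computed in (the proof of) Lemma~\ref{lem:logsmY}. Relative log smoothness in the sense of \cite[Def.\,3.6]{NO10}, once relative coherence is known (Corollary after Proposition~\ref{lem:relcoherent}), reduces locally to checking that $(\overline X,\shM_{\overline X})\to(\AA^1,\shM_{\AA^1})$ is log smooth and that the quotient $\shM_{\overline X}/\shF_{\overline X}$ is, stalkwise, isomorphic to a free commutative monoid $\ZZ_{\ge 0}^s$ (so that the ``extra'' directions in $\shM_{\overline X}$ over $\shF_{\overline X}$ behave like ordinary smooth coordinates).

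The first condition is exactly Lemma~\ref{lem:smooth2}. For the second, I would observe that the quotient $\shM_{\overline X}/\shF_{\overline X}$ is the restriction (pullback) of $\shM_{\overline Y}/\shF_{\overline Y}$ along the closed immersion $\overline X\hookrightarrow \overline Y$: this is a formal consequence of the fact that both $\shM_{\overline X}$ and $\shF_{\overline X}$ are defined by pullback from $\overline Y$. The computation at the end of the proof of Lemma~\ref{lem:logsmY} identifies the stalk of $\shM_{\overline Y}/\shF_{\overline Y}$ at a point of the torus orbit corresponding to a face $G\subseteq\overline\Gamma$ with the quotient monoid $(\tiM\cap\RR_{\ge0}(\overline\Gamma-G))/(\tiM\cap\RR_{\ge0}(F_G-G))$, and shows this is free precisely when $\overline Y$ is smooth in a neighborhood of the relevant small orbit. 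When $A$ itself is smooth, Lemma~\ref{lem:logsmY} gives freeness on all of $\overline Y$, hence a fortiori on $\overline X$, and we are done.

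When $A$ has its (unique, isolated) singularity, Lemma~\ref{lem:logsmY} gives freeness on $\overline Y$ only away from the $1$-dimensional torus orbit $O_0\subset\overline Y$ corresponding to the face $(K^\times\times\RR)\cap\overline\Gamma=(0\times\RR)\cap\overline\Gamma$. The crucial step is then to show
\[
\overline X\cap O_0=\varnothing,
\]
after which the pullback of the quotient sheaf automatically remains free on $\overline X$. To establish this, I would use Assumption~\ref{assumptions}(4): the orbit $O_0$ sits over the cone point of $A\subset \overline Y$ (the unique singular point of $A$), and its closure inside $\overline X$ would force some fiber of $\overline\pi$ — in particular the generic fiber containing $Z$ — to pass through the singular point of $A$. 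This contradicts the smoothness of $Z$. Thus $\overline X$ is contained in the locus where Lemma~\ref{lem:logsmY} applies, and $\shM_{\overline X}/\shF_{\overline X}$ is stalkwise free everywhere on $\overline X$.

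The main obstacle I anticipate is the clean transfer of the ``free quotient'' property along the closed immersion $\overline X\hookrightarrow\overline Y$. A priori, cutting by the regular function $\tilde f$ could introduce new relations in the monoid quotient at points of $\overline X$, collapsing the freeness. One needs to verify that, at each $y\in\overline X$, the image of $\tilde f$ in $\shO_{\overline Y,y}$ does not lie in the ideal generated by the (non-invertible) generators of the free quotient — equivalently, that $\tilde f$ is ``transverse'' to the face structure defining $\shF_{\overline Y}$. This transversality should follow from the genericity hypothesis on the coefficients $a_m$ encoded in Remark~\ref{remgeneric}, which ensures that the toric strata of $\overline Y$ intersect $\overline X$ transversally, so that the pulled-back face $\shF_{\overline X,y}$ remains a face of $\shM_{\overline X,y}$ with free quotient of the same rank as on $\overline Y$.
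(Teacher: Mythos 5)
Your reduction to (a) log smoothness of the full composition, supplied by Lemma~\ref{lem:smooth2}, and (b) stalkwise freeness of $\shM_{\overline X}/\shF_{\overline X}$, transferred from $\overline Y$ via Lemma~\ref{lem:logsmY} together with the disjointness $\overline X\cap\overline{O}_0=\varnothing$ in the singular case, is exactly the argument the paper has in mind. Two remarks.

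First, the ``main obstacle'' you anticipate --- that cutting by $\tilde f$ might introduce new relations in the monoid quotient and destroy freeness --- is not actually there, and the transversality/genericity argument you reach for is unnecessary. The quotient $\shM_{\overline X}/\shF_{\overline X}$ equals the quotient of characteristics $\overline{\shM}_{\overline X}/\overline{\shF}_{\overline X}$, and the characteristic of a pullback log structure along a closed immersion $i:\overline X\hra\overline Y$ is the topological pullback: $\overline{\shM}_{\overline X}=i^{-1}\overline{\shM}_{\overline Y}$ and likewise for $\shF$. (Stalkwise, the surjection $\shO_{\overline Y,y}\ra\shO_{\overline X,y}$ is a local ring map, so a section of $\shM_{\overline Y}$ maps to a unit of $\shO_{\overline X}$ iff it already maps to a unit of $\shO_{\overline Y}$; the equivalence relation defining the associated log structure is therefore unchanged.) Cutting by $\tilde f$ changes $\shO^\times$ but never the characteristic, so freeness of the stalks established in Lemma~\ref{lem:logsmY} is inherited verbatim at every point of $\overline X$ lying in the good locus, with no further hypothesis.

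Second, the disjointness $\overline X\cap\overline{O}_0=\varnothing$ admits a one-line direct proof: on the orbit closure corresponding to the face $G=(0\times\RR)\cap\overline\Gamma$, every monomial $z^{(m,h(m))}$ with $m\neq 0$ restricts to zero since $(m,h(m))\notin G$, so $\tilde f$ restricts to the nonzero constant $a_0$, which never vanishes. Your argument --- that an intersection would push some fiber of $\overline\pi$ through the singular cone point, contradicting smoothness of $Z$ --- points in the same direction as the paper's citation of Assumption~\ref{assumptions}(4), but as written it only literally controls the $t=1$ fiber and implicitly relies on the (true but unproved) claim that $O_0$ projects isomorphically to $\CC^*$ and hits the cone point in every $t\neq 0$ fiber. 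The constant-term argument is shorter and needs no smoothness input.
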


\subsection{The Kato Nakayama space is a fiber bundle}
By Remark~\ref{rem:KNnoncoherent}, we may construct the Kato-Nakayama space for $(\overline Y,\shF_{\overline Y})$ and 
$(\overline X,\shF_{\overline X})$ and by functoriality, we have maps
$$\overline Y(\shM_{\overline Y})_\log  \stackrel{g^{\overline Y}_\log}{\lra} \overline Y(\shF_{\overline Y})_\log \stackrel{\overline \pi^\dag_\log}{\lra} \AA^1_\log,$$
$$\overline X(\shM_{\overline X})_\log  \stackrel{g^{\overline X}_\log}{\lra} \overline X(\shF_{\overline X})_\log \stackrel{\overline \pi^\dag_\log}{\lra} \AA^1_\log.$$
The statement of Theorem.~\ref{NOthm} in \cite{NO10} allows for the weaker assumption of relative coherency of the source and relatively smoothness of the map, 
so we conclude from Lemma~\ref{lem:logsmX} along the same lines as in Section~\ref{subsec:Kato-Nakayama} the following result.

\begin{theorem} 
\label{thm:fiberbdlgen}
The maps of Kato-Nakayama spaces
$$\overline X(\shF_{\overline X})_\log \stackrel{\overline \pi^\dag_\log}{\lra} \AA^1_\log,$$
$$\overline X(\shM_{\overline X})_\log \stackrel{\overline \pi^\dag_\log\circ g^{\overline X}_\log}{\lra} \AA^1_\log$$
are topological fiber bundles.
\end{theorem}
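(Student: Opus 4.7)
The plan is to invoke the more general form of the Nakayama--Ogus result \cite[Theorem 5.1]{NO10}, whose hypotheses are strictly weaker than those recalled in Theorem \ref{NOthm}: it suffices for the source to carry a log structure that is only \emph{relatively} coherent in an ambient coherent log structure, and for the map to be only \emph{relatively} log smooth (plus properness, separatedness, and exactness). The coauthors' remark following Lemma \ref{lem:logsmX} flags precisely this generalization; all the work on our side is the verification of hypotheses.

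For the second map $\overline\pi^\dag_\log\circ g^{\overline X}_\log:\overline X(\shM_{\overline X})_\log\to \AA^1_\log$ nothing new is needed. The log structure $\shM_{\overline X}$ is the standard fine, saturated one of Section \ref{sec:log}, and relevant log smoothness and properness of $\overline{\pi}^\dag$ have been recorded in Lemma \ref{lem:smooth2} and Section \ref{sec:tordegcompact}. Thus the same argument as in the proof of Corollary \ref{maincor} (using Theorem \ref{NOthm} applied to $(\overline X,\shM_{\overline X})\to(\AA^1,\shM_{\AA^1})$) applies verbatim.

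For the first map $\overline\pi^\dag_\log:\overline X(\shF_{\overline X})_\log\to \AA^1_\log$, I would check the hypotheses of the generalized Nakayama--Ogus theorem one by one. Relative coherence of $\shF_{\overline X}$ in $\shM_{\overline X}$ is the corollary to Proposition \ref{lem:relcoherent}; relative log smoothness of $\overline\pi^\dag$ is Lemma \ref{lem:logsmX}. Properness and separatedness of the underlying morphism $\overline X \to \AA^1$ follow from the Lemma preceding Corollary 2.20 in Section \ref{sec:tordegcompact}, applied to the closed subscheme $\overline X \subset \overline Y$. It remains to verify exactness of $\overline\pi^\dag$ at every point $x\in \overline X$, which is the main content of the argument.

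The hard part, and the one point where the non-coherent log structure requires care, is exactness. Let $y\in \overline Y$ be a point of the torus orbit corresponding to a face $G\subseteq \overline\Gamma$, and let $F_G$ be as in Lemma \ref{lem:FGequiv}. By Lemma \ref{lem:stalksF}, the stalk $\overline{\shF}_{\overline Y,y}=\shF_{\overline Y,y}/\shO^\times_{\overline Y,y}$ is identified with the sharp monoid $\tiM\cap \RR_{\ge 0}(F_G-G)/(\text{units})$, and the map from $\ZZ_{\ge 0}$ parametrizing $t$ sends the generator to the image of $(0,1)$. Since $(0,1)\in F_i$ for every $i$ enumerated in the definition of $F_G$, $(0,1)$ lies in $F_G$ for every face $G\subseteq \overline\Gamma$; hence the map $\ZZ_{\ge 0}\to \overline{\shF}_{\overline Y,y}$ factors through the saturated submonoid generated by $(0,1)$. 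Exactness then reduces to the statement that an element $(m,r)\in \tiM\cap \RR\cdot (F_G-G)$ whose image in $\tiM^\gp$ equals $k\cdot(0,1)$ for some $k\in \ZZ$ is already of the form $k\cdot(0,1)$ in $\tiM\cap \RR_{\ge 0}(F_G-G)$ with $k\ge 0$. This is a direct combinatorial check using that $(0,1)$ spans a ray of $F_G$. Restricting to $\overline X$ only intersects with further equations from $\tilde f$ but does not affect the face structure along $t$, so the same verification gives exactness of $\overline\pi^\dag$ on $(\overline X,\shF_{\overline X})$. Granting these checks, the generalized Nakayama--Ogus theorem applies and yields both fiber bundle statements.
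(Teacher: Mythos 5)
Your proposal is correct and follows essentially the same route as the paper: the paper's (brief) argument is precisely to invoke the relatively-coherent, relatively-smooth form of \cite[Theorem 5.1]{NO10} using Lemma \ref{lem:logsmX} and the corollary to Proposition \ref{lem:relcoherent}, with the remaining hypotheses (properness, separatedness, exactness) handled ``along the same lines'' as Section \ref{subsec:Kato-Nakayama}, and you have simply spelled out those checks. One small imprecision: $(0,1)$ need not span an extremal ray of $F_G$, but the exactness argument only requires that $(0,1)\notin\RR(G-G)$ forces $-(0,1)\notin\RR_{\geq 0}(F_G-G)$ --- i.e.\ that $(0,1)$ is not a unit of the local cone --- which is what your computation actually uses, so the conclusion stands.
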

Moreover, the statement of Proposition~\ref{prop:topmfdbdry} holds word for word when replacing $\overline X_\log$ and $X_\log$ respectively by $\overline X(\shF_{\overline X})_\log$ and $X(\shF_{X})_\log$ where
$\shF_{X}$ is the restriction of $\shF_{\overline X}$ to $X$.

\subsection{Embedding the skeleton $S_{\sympol,\cT,K}$ in the Kato-Nakayama space $(\overline X(\shF_{\overline X})_\log)_1$}
We use the following notation for the induced maps on Kato-Nakayama spaces
\[
\xymatrix{
\overline Y(\shM_{\overline Y})_\log \ar^{g^{\overline Y}_\log}[r]\ar_{\rho(\shM_{\overline Y})}[dr] & \overline Y(\shF_{\overline Y})_\log \ar^{\rho(\shF_{\overline Y})}[d] \\
& \overline Y
}
\]
and similarly for $X,\overline X$ in place of $Y,\overline Y$.
\begin{proposition} 
\label{cor_stalk_of_shF}
Given a point $y\in\overline Y$ contained in the torus orbit associated to the face $G\subseteq \overline\Gamma$, the map 
$g_\log|_{\rho(\shM_{\overline Y})^{-1}(y)}:\, \rho(\shM_{\overline Y})^{-1}(y)\ra \rho(\shF_{\overline Y})^{-1}(y)$ is the restriction map
$$ 
\Hom\left(\frac{\tiM\cap\RR_{\ge0}(\overline\Gamma-G)}{\tiM\cap\RR(G-G)},S^1\right)
\ra
\Hom\left(\frac{\tiM\cap\RR_{\ge0}(F_G-G)}{\tiM\cap\RR(G-G)},S^1\right)
$$
induced by the injection 
$\frac{\tiM\cap\RR_{\ge0}(F_G-G)}{\tiM\cap\RR(G-G)}\hra\frac{\tiM\cap\RR_{\ge0}(\overline\Gamma-G)}{\tiM\cap\RR(G-G)}$.
\end{proposition}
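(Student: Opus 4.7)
The plan is to unpack the point-set definition of the Kato-Nakayama space at $y$ using the explicit stalk descriptions in Lemma~\ref{lem:stalksF} (relying on Remark~\ref{rem:KNnoncoherent} since $\shF_{\overline Y}$ is only relatively coherent, not coherent). Write $P := \tiM \cap \RR_{\ge 0}(\overline\Gamma - G)$ and $Q := \tiM \cap \RR_{\ge 0}(F_G - G)$. Then Lemma~\ref{lem:stalksF} provides
\[
\shM_{\overline Y, y} = P \otimes_{P^\times} \shO^\times_{\overline Y, y}, \qquad \shF_{\overline Y, y} = Q \otimes_{Q^\times} \shO^\times_{\overline Y, y},
\]
and the inclusion $\shF_{\overline Y} \hookrightarrow \shM_{\overline Y}$ is induced at the stalk by the monoid inclusion $Q \hookrightarrow P$.

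The first step is the elementary lattice identity $P^\times = Q^\times = \tiM \cap \RR(G-G)$. For $P$, this is standard: the units of a toric monoid coincide with the lattice points in the maximal linear subspace of the defining cone, and $G$ being a face of $\overline\Gamma$ forces the maximal linear subspace of $\RR_{\ge 0}(\overline\Gamma - G)$ to be exactly $\RR(G-G)$. For $Q$, Lemma~\ref{lem:FGequiv} identifies $F_G$ as a face of $\overline\Gamma$ containing $G$ itself as a face, so $\RR_{\ge 0}(F_G - G)$ is a face of $\RR_{\ge 0}(\overline\Gamma - G)$ with the same maximal linear subspace $\RR(G-G)$.

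The second step identifies each fiber. By Definition~\ref{def:Kato-Nakayama}, $\rho(\shM_{\overline Y})^{-1}(y)$ consists of group homomorphisms $h: \shM^\gp_{\overline Y, y} \to S^1$ restricting to the argument map on $\shO^\times_{\overline Y, y}$; by the pushout presentation above, this is equivalent data to a monoid homomorphism $\tilde h: P \to S^1$ with prescribed restriction to $P^\times$ (namely $p \mapsto \alpha_{\overline Y}(p)(y)/|\alpha_{\overline Y}(p)(y)|$). Since $S^1$ is divisible, the set of such $\tilde h$ is a torsor for $\Hom(P/P^\times, S^1)$; fixing any base point $\tilde h_0$ identifies it with this Hom group, yielding $\rho(\shM_{\overline Y})^{-1}(y) = \Hom\bigl(\frac{\tiM \cap \RR_{\ge 0}(\overline\Gamma - G)}{\tiM \cap \RR(G-G)}, S^1\bigr)$. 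The identical argument gives $\rho(\shF_{\overline Y})^{-1}(y) = \Hom\bigl(\frac{\tiM \cap \RR_{\ge 0}(F_G - G)}{\tiM \cap \RR(G-G)}, S^1\bigr)$, using $\tilde h_0|_Q$ as the base point; that this is a valid base point uses precisely the equality $P^\times = Q^\times$ from Step~1, so that the $P$-boundary and the $Q$-boundary conditions agree on the common unit group.

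The third step is then immediate: $g^{\overline Y}_\log$ acts on fibers by $\tilde h \mapsto \tilde h|_Q$, which under the chosen identifications translates to restriction of group homomorphisms $k \mapsto k|_{Q/Q^\times}$ induced by the injection $Q/Q^\times \hookrightarrow P/P^\times$ of quotient monoids, exactly matching the claim. The main subtlety throughout is the torsor-versus-set distinction and the need for coordinated base-point choices on the two sides; the equality $P^\times = Q^\times$ established in Step~1 makes this coordination automatic and is therefore the crux of the argument.
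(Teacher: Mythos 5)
Your proof is correct and takes the same approach as the paper, which simply cites Definition~\ref{def:Kato-Nakayama} and Lemma~\ref{lem:stalksF} and observes that $(\RR_{\ge0}(\overline\Gamma-G))^\times=(\RR_{\ge0}(F_G-G))^\times=\RR(G-G)$. You spell out the torsor/base-point bookkeeping that the paper leaves implicit, and correctly identify the unit-group equality $P^\times = Q^\times$ as the key observation making the identifications on the two sides compatible.
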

\begin{proof} This is a straight-forward combination of Def.~\ref{def:Kato-Nakayama} and Lemma~\ref{lem:stalksF} with the additional observation that 
$(\RR_{\ge0}(\overline\Gamma-G))^\times=(\RR_{\ge0}(F_G-G))^\times=\RR(G-G)$.
\end{proof}

As before, we denote by $(\overline X(\shF_{\overline X})_\log)_1$ the fiber of $g^{\overline X}_\log$ over $(0,1)\in\RR_{\ge 0}\times S^1=\AA^1_\log$.
There is a surjection
$$g^{\overline X}_\log:\overline X(\shM_{\overline X})_{\log,1}\ra\overline X(\shF_{\overline X})_{\log,1}.$$
\begin{theorem}
\label{thm:embgen}
We have a canonical embedding of $S_{\sympol,\cT}$ in $\overline X(\shM_{\overline X})_{\log,1}$ whose image under $g^{\overline X}_\log$ is canonically identified with $S_{\sympol,\cT,K}$.
\end{theorem}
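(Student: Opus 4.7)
The plan is to produce the embedding as the composition
\[
\iota\colon S_{\sympol,\cT} \stackrel{\lambda}{\hra} (X_{0,\log})_1 \hra \overline X(\shM_{\overline X})_{\log,1},
\]
where $\lambda$ is the closed embedding of Theorem~\ref{thm:cartesian} and the second map comes from the open inclusion $(X,\shM_X) \hra (\overline X,\shM_{\overline X})$ (the log structure $\shM_{\overline X}$ pulling back to $\shM_X$ by the construction of Section~\ref{subsubsec:logtoric}). Canonicity is immediate, so the substantive claim is that $g^{\overline X}_\log \circ \iota$ factors through the quotient $S_{\sympol,\cT} \sra S_{\sympol,\cT,K}$ of Definition~\ref{def:skeleton5}, inducing a homeomorphism onto its image inside $\overline X(\shF_{\overline X})_{\log,1}$.

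For the key computation, fix $(x,\phi) \in S_{\sympol,\cT}$ and set $y = \rho(\shM_{\overline X})(\iota(x,\phi))$. Since $x$ lies in the relative interior of $\tau_x$, Remark~\ref{rem:jx} shows that $y$ sits in the dense torus orbit of $Y_{0,\tau_x}$, whose associated face of $\overline\Gamma$ is the bounded simplex
\[
G_{\tau_x} := \{(m,h(m)) \mid m \in \tau_x\}.
\]
Proposition~\ref{cor_stalk_of_shF} then presents $g^{\overline X}_\log$ on the fiber over $y$ as the restriction of characters from $(\tiM \cap \RR_{\geq 0}(\overline\Gamma - G_{\tau_x}))/(\tiM \cap \RR(G_{\tau_x} - G_{\tau_x}))$ to the analogous quotient attached to $F_{G_{\tau_x}}$. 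Combining Lemmas~\ref{lem:FGequiv} and~\ref{lem:KGamma}, I would identify $F_{G_{\tau_x}}$ with the face of $\overline\Gamma$ corresponding under Lemma~\ref{lem:KGamma} to $K_x$: the smallest face of $K$ containing $K^\times$ and $\RR_{\geq 0}\tau_x$ is precisely $K_x$, because any supporting hyperplane of $K$ vanishing at the interior point $x \in \tau_x^\circ$ must vanish on all of $\tau_x$.

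With this identification in hand, the character attached to $\iota(x,\phi)$ depends on $\tiM$ only through the projection $p\colon \tiM \to M$ (Definition~\ref{def:lambda}), so the character restriction from Proposition~\ref{cor_stalk_of_shF} translates, after applying $p$, into the restriction $\phi \mapsto \phi\vert_{K_x \cap M}$. Consequently $\iota(x,\phi)$ and $\iota(x',\phi')$ map to the same point of $\overline X(\shF_{\overline X})_{\log,1}$ iff $x = x'$ and $\phi\vert_{K_x \cap M} = \phi'\vert_{K_x \cap M}$ --- exactly the equivalence of Definition~\ref{def:skeleton5}. Since $S_{\sympol,\cT,K}$ is compact (as a quotient of the compact $S_{\sympol,\cT}$) and $\overline X(\shF_{\overline X})_{\log,1}$ is Hausdorff, the induced continuous bijection $S_{\sympol,\cT,K} \to \mathrm{image}$ is automatically a homeomorphism, yielding the canonical embedding.

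The main obstacle is the combinatorial identification $F_{G_{\tau_x}} \leftrightarrow K_x$ via Lemma~\ref{lem:KGamma}, together with the verification that $p$ translates the abstract character restriction of Proposition~\ref{cor_stalk_of_shF} into the concrete restriction to $K_x \cap M$. The latter rests on the equality $K_x = \RR_{\geq 0}\, p(F_{G_{\tau_x}}) + K^\times$ read off from Lemma~\ref{lem:KGamma}, together with the observation that any $\phi \in G_{\tau_x}$ is already trivial on $\bZ \tau_x^{[0]}$, which neutralizes the passage to the quotient by $\RR(G_{\tau_x} - G_{\tau_x})$ appearing in Proposition~\ref{cor_stalk_of_shF}.
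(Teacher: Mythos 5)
Your overall strategy matches the paper's: start from the embedding $\lambda$ of Theorem~\ref{thm:cartesian}, apply Proposition~\ref{cor_stalk_of_shF} to compute $g^{\overline X}_\log$ fiberwise, use Lemmas~\ref{lem:FGequiv} and~\ref{lem:KGamma} to translate $F_G$ into a face of $K$, and finish with a compactness/Hausdorff argument. However, your identification of the face $G$ is wrong, and this feeds into the part of the argument you describe as ``neutralizing the passage to the quotient.''

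The point $y = \rho(\shM_{\overline X})(\iota(x,\phi))$ lies in the affine chart $Y = \Spec\CC[\Gamma_{\geq h}\cap\tiM] \subset \overline Y$, which is the chart attached to the vertex $(0,0)\in\overline\Gamma$. Therefore the face of $\overline\Gamma$ indexing the orbit through $y$ must contain $(0,0)$; concretely, $y$ is a monoid homomorphism nonvanishing precisely on $\Gamma_{\geq h,\tau_x}\cap\tiM$, and the corresponding face of $\overline\Gamma$ is $\Gamma_{\geq h,\tau_x}\cap\overline\Gamma = \{(m,h(m))\mid m\in\conv(\{0\}\cup\tau_x)\}$, not the graph of $h$ over $\tau_x$ alone. (A sanity check: a point of the dense orbit of a $(\dim\tau_x+1)$-dimensional $Y_{0,\tau_x}$ must sit on a $(\dim\tau_x+1)$-dimensional face of $\overline\Gamma$; your candidate has dimension $\dim\tau_x$.) The discrepancy happens not to change $F_G$, since $(0,0)\in\overline\Gamma\cap(K^\times\times\RR)$ and hence $\langle \overline\Gamma\cap(K^\times\times\RR),\,\{(m,h(m))\mid m\in\tau_x\}\rangle = \langle\overline\Gamma\cap(K^\times\times\RR),\, G\rangle$, so your translation of the restriction into $\phi\mapsto\phi\vert_{K_x\cap M}$ is pointing at the right answer. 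But the quotient $\RR(G-G)$ in Proposition~\ref{cor_stalk_of_shF} does depend on $G$: with the correct face one quotients by a $(\dim\tau_x+1)$-dimensional span whose image under $p$ is $\RR\tau_x$, not the $\dim\tau_x$-dimensional span $\RR(\tau_x-\tau_x)$. Triviality of $\phi$ on $\bZ\tau_x^{[0]}$ does \emph{not} imply triviality on all of $M\cap\RR\tau_x$; the failure is exactly the finite group $D_{\tau_x}$ indexing the components of $S_{\sympol,\cT}$ over $x$. The paper sidesteps this by fixing $d\in D_{\tau_x}$, analyzing the single fiber $T_1\cong A_{\tau_x}$ over $j(x,d)$, and letting $d$ range separately; your ``neutralization'' step conflates a coset of $A_{\tau_x}$ with the whole group $G_{\tau_x}$ and should be reworked along the paper's lines, keeping track of the base point $j(x,d)$ and the action of $D_{\tau_x}$.
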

\begin{proof} We have the result of Theorem~\ref{thm:cartesian} already, so in particular an embedding of $\widehat{\partial\sympol'}$ in $\overline X$ and of $S_{\sympol,\cT}$ in 
$(\overline X(\shM_{\overline X})_\log)_1$.
We need to show that the image of $S_{\sympol,\cT}$ under $g^{\overline X}_\log$ yields the quotient space $S_{\sympol,\cT,K}$.
We fix a point $x\in\overline X$ in a torus orbit $O_G=\Spec\CC[\RR(G-G)\cap\tiM]$ with $G\subseteq\overline\Gamma$.
Let us regard the composition
$$
\overline X(\shM_{\overline X})_{\log,1} 
\stackrel{g^{\overline X}_\log}{\lra}  
\overline X(\shF_{\overline X})_{\log,1}
\stackrel{\rho(\shF_{\overline X})}{\lra}
\overline X.
$$
By the Cartesian property of the Kato-Nakayama space in Lemma~\ref{lem:logpullback}, we may use Proposition~\ref{cor_stalk_of_shF} to identify
the restriction of $g^{\overline X}_\log$ to the inverse images of $x$ as the map $T_1\ra T_2$ where
$$T_1=\left\{\alpha\in \Hom\left(\frac{\tiM\cap\RR_{\ge0}(\overline\Gamma-G)}{\tiM\cap\RR(G-G)},S^1\right) \,\Bigg|\, \alpha(0,1)=1\right\},$$
$$T_2=\left\{\alpha\in \Hom\left(\frac{\tiM\cap\RR_{\ge0}(F_G-G)}{\tiM\cap\RR(G-G)},S^1\right) \,\Bigg|\, \alpha(0,1)=1\right\}.$$
Let $p:\tiM_\RR\ra M_\RR$ denote the natural projection and $K_{G}$ denote the smallest face of $K$ containing $p(G)$.
We have $p(\overline\Gamma)=\sympol$. We use the fact that the condition $\alpha(0,1)=1$ in $T_1,T_2$ can be replaced by changing the source of $\alpha$ to a subquotient of $M$ instead of $\tiM$. Precisely,
$$T_1=\Hom\left(\frac{M\cap\RR_{\ge0}(\sympol-p(G))}{M\cap p(\RR(G-G))},S^1\right),$$
$$T_2=\Hom\left(\frac{M\cap\RR_{\ge0}(p(F_G)-p(G))}{M\cap p(\RR(G-G))},S^1\right).$$

Note that if $x\in\widehat{\partial\sympol'}$ then $K_{G}$ coincides with $K_{x}$. 
Moreover, $F_G$ contains $(K^\times\times\RR)\cap\overline\Gamma$ and thus corresponds to the face $(\RR_{\ge0}F_G+K^\times\times\RR)\cap M_\RR$ of $K$ by Lemma~\ref{lem:KGamma}. 
We claim that this face is $K_G$. Indeed by Lemma~\ref{lem:FGequiv}, $F_G$ is the smallest face of $\overline\Gamma$ containing $G$ and $\overline\Gamma\cap K^\times\times\RR$ which maps to $K_{p(G)}$ under the bijection in Lemma~\ref{lem:KGamma}. 
Finally, we may assume that $G$ contains $\overline\Gamma\cap (K^\times\times\RR)$ because otherwise $F_G=\overline\Gamma$ and $K_G=K$ and this case is clear. 
Note that $\tau_x:=p(G)\cap \partial\sympol'$ is an element of $\cT$.
We can then identify
$$T_1=A_{\tau_x}\qquad \hbox{and}\qquad T_2=\Hom\left(\frac{M\cap K_G}{M \cap K_G^\times},S^1\right)$$
which gives the desired quotient representation of $g^{\overline X}_\log(\rho(\shM_{\overline X})^{-1}(x))$ as in Def.~\ref{def:skeleton5}.
\end{proof}

\subsection{Retraction}

\begin{theorem}[{\bf Main Theorem for General Cones}] 
The skeleton $S_{\sympol,\cT,K}$ embeds in $Z$ as a strong deformation retract.
\end{theorem}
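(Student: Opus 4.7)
The plan is to mirror the proof of the Main Theorem~(\ref{thm:1.1}), using the relatively coherent log structure $\shF$ in place of $\shM$ so that $Z$ itself is recovered as a Kato--Nakayama fiber, and then to adapt the argument of Theorem~\ref{thm:retract} to produce the deformation retraction onto $S_{\sympol,\cT,K}$. For the first step, I would apply Theorem~\ref{thm:fiberbdlgen} and restrict to the open subset $X\subset\overline X$ to obtain a topological fiber bundle $\pi^\dag_\log:X(\shF_X)_\log\to\AA^1_\log$. By Lemma~\ref{lem:KGamma}, the facets of $\overline\Gamma$ containing $(K^\times\times\RR)\cap\overline\Gamma$ correspond precisely to the facets of $K$, so the components of $D_\shF$ either lie in $\overline Y\setminus Y$ or in the special fiber $Y_0$; in particular $D_\shF\cap A=\emptyset$. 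Consequently $\shF_X|_{X_1}$ is trivial, so the Kato--Nakayama fiber over $(1,1)\in\AA^1_\log$ is $Z$ itself, and by the fiber bundle property this is homeomorphic to $(X(\shF_X)_\log)_1$.

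For the deformation retraction, I would exploit that the surjection $g^{\overline X}_\log:(X(\shM_X)_\log)_1\twoheadrightarrow(X(\shF_X)_\log)_1$ sits over the identity on $X_0$. Combining Theorem~\ref{thm:embgen} with surjectivity of $g^{\overline X}_\log$ on fibers yields the analogue of Theorem~\ref{thm:cartesian} for $\shF$, namely
\[
S_{\sympol,\cT,K}\;=\;\rho(\shF_X)_1^{-1}(\WPSp).
\]
Moreover, Proposition~\ref{cor_stalk_of_shF} identifies the fibers of $\rho(\shF_X)_1$ over each torus orbit of $X_0$ as cosets of compact tori, so over the interior of each simplex the map is a trivial torus bundle. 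With these two inputs in place, the iterative argument from the proof of Theorem~\ref{thm:retract} --- retract $X_0$ onto $\WPSp$ through the skeleta $\PSp_k$, then lift stratum-by-stratum along $\rho$ via Lemma~\ref{lem:slight-variant2} --- applies verbatim with $\shM$ replaced by $\shF$, producing a strong deformation retraction of $(X(\shF_X)_\log)_1$ onto $S_{\sympol,\cT,K}$. Combined with Step~1 this proves the theorem.

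The main obstacle is verifying that $\rho(\shF_X)_1$ restricts to a projection from a product --- not just a twisted bundle --- over the preimage of the interior of each stratum, so that Lemma~\ref{lem:slight-variant2} applies at each stage of the iteration. Concretely, using Proposition~\ref{cor_stalk_of_shF} and the face correspondence of Lemma~\ref{lem:KGamma}, I would have to describe explicitly how the kernel of the restriction map on $\Hom$-spaces varies over each stratum and confirm that the analogue of Corollary~\ref{cor:localiso} persists in the $\shF$-setting (so that over top-dimensional simplices the map $\rho(\shF_X)_1$ is already a homeomorphism, matching the quotient $S_{\sympol,\cT}/\!\!\sim$ defined in Definition~\ref{def:skeleton5}). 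Once these combinatorial bookkeeping steps are handled, the previous retraction argument transports to the present setting, and the embedding of $S_{\sympol,\cT,K}$ in $Z$ as a strong deformation retract follows.
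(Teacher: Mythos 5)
Your proposal follows essentially the same route as the paper's proof: apply Theorem~\ref{thm:fiberbdlgen} to identify $Z$ with the Kato--Nakayama fiber $(\overline X(\shF_{\overline X})_\log)_1$, invoke Theorem~\ref{thm:embgen} to embed $S_{\sympol,\cT,K}$, and then repeat the iterative lifting argument of Theorem~\ref{thm:retract} using Lemma~\ref{lem:slight-variant2}. The one place where you add something beyond the paper's terse proof is your intermediate claim that $D_\shF\cap A=\emptyset$, hence $\shF_X|_{X_1}$ is trivial, hence the fiber over $(1,1)$ is literally $Z$. This is a cleaner way to see the homeomorphism $Z\cong(X(\shF_X)_\log)_1$, but note that the paper instead routes through the compactification $\overline X$ and the relative-rounding manifold-with-boundary statement (the $\shF$-analogue of Proposition~\ref{prop:topmfdbdry}): fibers of $\overline X(\shF_{\overline X})_\log\to\AA^1_\log$ are manifolds with boundary whose interiors are the fibers of $X(\shF_X)_\log$, and this is what actually certifies the fiber over $(1,1)$ as $Z$. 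Your direct argument implicitly uses that $X_1 = X\cap\pi^{-1}(1)$ already equals $Z$ and that no component of $D_\shF$ meets it; this is true under Assumption~\ref{assumptions}(3) because a facet of $\overline\Gamma$ giving a horizontal divisor of $Y$ necessarily contains the face $(K^\times\times\RR)\cap\overline\Gamma$ and so is one of the excluded $F_i$'s, but you should verify this rather than assert it, especially when $K^\times\neq\{0\}$. The concern you raise about $\rho(\shF_X)_1$ restricting to a genuine product over each stratum interior is exactly the point the paper addresses via Proposition~\ref{cor_stalk_of_shF} and the explicit fiber description in the proof of Theorem~\ref{thm:embgen}: the fibers $T_2$ depend only on $\tau_x$ and $K_G$, hence are locally constant on simplex interiors, so Lemma~\ref{lem:slight-variant2} applies.
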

\begin{proof}
By Thm~\ref{thm:embgen}, we have an embedding $j:S_{\sympol,\cT,K}\hra (\overline X(\shF_{\overline X})_{log})_1$ and by Thm~\ref{thm:fiberbdlgen} a homeomorphism $Z\cong (\overline X_{log})_1$.
It remains to show that $j$ is a strong deformation retraction. 
This works word by word the same way in the reasoning that led to Thm~\ref{thm:retract}.
The point is, that, from the proof of Theorem \ref{thm:embgen} above, we have
an explicit description of the fibers of the map $\rho({\shF_{\overline X}})$ over a point
$x\in \widehat{\partial\sympol'}$ and these are ``constant'' on the interiors of the simplices of $\cT$.  This allows us to use Lemma \ref{lem:slight-variant2} to
lift retractions and construct an iterative
argument exactly as in Theorem \ref{thm:retract}.
\end{proof}

\end{document}